\newcommand{\NN}{\mathbf{N}}
\newcommand{\RR}{\mathbf{R}}
\newcommand{\II}{\mathbb{I}}
\newcommand{\JJ}{\mathbb{J}}
\newcommand{\rk}{\operatorname{rk}}
\newcommand{\covered}{< \hspace{-1.5ex} \cdot \hspace{1ex}}
\newcommand{\oo}{\text{\scalebox{1.5}{$\circ$}}}
\newcommand{\ox}{\text{\raisebox{.25ex}{$\otimes$}}}
\newcommand{\boxo}{\Box}
\newcommand{\boxx}{\boxtimes}
\newcommand{\V}{\mathrm{V}}
\newcommand{\w}{\operatorname{wt}}
\newcommand{\xto}{\xrightarrow}
\newcommand{\llangle}{\langle \! \langle}
\newcommand{\rrangle}{\rangle \! \rangle}
\newcommand{\trim}{\operatorname{trim}}
\newcommand{\layer}{\oplus}
\newcommand{\glue}{\oplus_G}
\newcommand{\stick}{\oplus_S}
\newcommand{\Top}{\operatorname{top}}
\newcommand{\Bot}{\operatorname{bot}}
\newtheorem{theorem}{Theorem}[section]
\newtheorem{cor}[theorem]{Corollary}
\newtheorem{prop}[theorem]{Proposition}
\theoremstyle{definition}
\newtheorem{defn}[theorem]{Definition}
\title{Enumeration of Graded $({\bf 3+1})$-Avoiding Posets}
\author{Joel Brewster Lewis and Yan X Zhang\\
Massachusetts Institute of Technology}
\begin{document}
\maketitle
\begin{abstract}
The notion of $({\bf 3+1})$-avoidance has shown up in many places in enumerative combinatorics, but the natural goal of enumerating all $({\bf 3+1})$-avoiding posets remains open. In this paper, we enumerate \emph{graded} $({\bf 3+1})$-avoiding posets for both reasonable definitions of the word ``graded.''  Our proof consists of a number of structural theorems followed by some generating function computations.  We also provide asymptotics for the growth rate of the number of graded $({\bf 3+1})$-avoiding posets. 

\noindent {\bf Keywords:} posets, $({\bf 3+1})$-avoidance, generating functions, asymptotic enumeration
\end{abstract}

\section{Introduction}\label{sec:intro} 
The notion of $({\bf 3+1})$-avoiding posets appears in different different areas of combinatorics, such as in the Stanley-Stembridge conjecture about the $e$-positivity of certain chromatic symmetric functions \cite{stanley-stembridge} and the characterization of interval semiorders \cite{fishburn}.  Graph-theoretically, $({\bf 3+1})$-avoiding posets are exactly those posets whose comparability graphs are complements of claw-free graphs; as a result, they also are connected to a generalization of the ``birthday problem'' \cite{birthday}.

Despite these connections, the enumeration of $({\bf 3+1})$-avoiding posets has remained elusive.  This is particularly bothersome because the enumeration of posets that are both $({\bf 2+2})$- and $({\bf 3+1})$-avoiding, the interval semiorders, is well-understood: the number of unlabeled $n$-element interval semiorders is exactly the Catalan number $C_n$ \cite{fishburn}. Moreover, $({\bf 2+2})$-avoiding posets have recently been enumerated, as well \cite{2+2}.  Happily, there has been some progress: Skandera \cite{skandera:characterization} has given a characterization of all $({\bf 3+1})$-avoiding posets involving the square of the antiadjacency matrix and Atkinson, Sagan and Vatter \cite{sagan:3+1permutations} have recently characterized and enumerated $({\bf 3+1})$-avoiding permutations (i.e., permutations whose associated posets are $({\bf 3+1})$-avoiding).  

In this paper, we consider a related problem and enumerate \emph{graded} $({\bf 3+1})$-avoiding posets (for both common meanings of the word graded) via structural theorems and generating function computations. The property of gradedness is very natural and captures a lot of the complexity of the general case while making the problem much more tractable. We remark that a substantially easier problem is to enumerate $({\bf 3+1})$- and $({\bf 2+2})$-avoiding graded posets, and that the solution may be found in work of the second-named author currently in preparation; labeled $({\bf 3+1})$- and $({\bf 2+2})$-avoiding strongly graded posets are counted by the generating function $1 + \frac{e^x(e^x - 1)(e^x - 2)}{e^{2x} - e^x - 1}$. 

In the rest of this introduction, we summarize our strategy and results. In Section~\ref{sec:definitions}, we offer some definitions and notation that we will use throughout the paper. Then in Section~\ref{sec:local conditions}, we give a useful local condition that is equivalent to $({\bf 3+1})$-avoidance for graded posets.  

The main ideas of the paper are in Section~\ref{sec:simplifications}, where we introduce several operations that allow us to decompose strongly graded $({\bf 3+1})$-avoiding posets into simpler objects. First, in Section~\ref{sec:trimming} we reduce our problem of obtaining the generating function for all graded $({\bf 3+1})$-avoiding posets to studying certain posets we will call \emph{trimmed} which are slightly simpler but which capture most of the information of the original posets. Then, in Section~\ref{sec:layering}, we show that trimmed $({\bf 3+1})$-avoiding posets arise from taking ordinal sums of \emph{sum-indecomposable} $({\bf 3+1})$-avoiding posets. Finally, in Section~\ref{sec:sticking and gluing} we introduce two more operations, \emph{gluing} and \emph{sticking}.  We show that sum-indecomposable $({\bf 3+1})$-avoiding posets arise from gluing and sticking together basic units called \emph{quarks}, which we enumerate in Section~\ref{sec:gf for quarks}.

This line of argument culminates in Section~\ref{sec:climax}, in which we backtrack and use the results of the preceding sections and the transfer-matrix method to enumerate all strongly graded $({\bf 3+1})$-avoiding posets. We end with some extensions of these techniques. In Section~\ref{sec:with height} we use similar generating functional arguments to enumerate strongly graded $({\bf 3+1})$-avoiding posets by height.  We use this modified enumeration in Section~\ref{sec:weakly graded posets} to enumerate $({\bf 3+1})$-avoiding \emph{weakly} graded posets.  Finally, in Section~\ref{sec:asymptotics}, we use the generating functions computed in Sections~\ref{sec:climax} and~\ref{sec:weakly graded posets} to establish the asymptotic rate of growth of the number of graded $({\bf 3+1})$-avoiding posets.

An extended abstract of this work appeared as \cite{FPSAC}.

\emph{Note added in proof}: In recent work \cite{all3+1}, Guay-Paquet, Morales and Rowland have enumerated all $({\bf 3 + 1})$-avoiding posets using similar techniques.

\section{Preliminaries}\label{sec:definitions}

A \textbf{partially ordered set}, or \textbf{poset} for short, is a set with an irreflexive, antisymmetric and transitive relation~$>$.  We say two elements $a$, $b$ of a poset are \textbf{comparable} if $a \geq b$ or $b \leq a$. In this paper, we concern ourselves only with posets of finite cardinality. We say that an element $w$ \textbf{covers} an element $v$, denoted $v \covered w$, if $v < w$ and there is no $z$ such that $v < z < w$.  Observe that the order relations of a finite poset follow by transitivity from the cover relations; this allows us to graphically represent posets by showing only the cover relations.  The resulting graph is called the \textbf{Hasse diagram} of the poset. 

A poset in which every pair of elements is comparable is called a \textbf{chain}, and a poset in which every pair of elements is incomparable is called an \textbf{antichain}.  

We say that four elements $w$, $x$, $y$, $z$ in a poset $P$ are a \textbf{copy of ${\bf 3+1}$} if we have that $x < y < z$ and $w$ is incomparable to all of $x$, $y$, $z$.  If $P$ contains no copy of ${\bf 3+1}$, we say that $P$ \textbf{avoids} ${\bf 3+1}$. 

Call a poset $P$ \textbf{weakly graded} if there exists a rank function $\rk:P \to \NN$ such that if $a \covered b$ then $\rk(b) - \rk(a) = 1$ and such that the minimal occurring rank in each connected component is $0$.  Call a weakly graded poset \textbf{strongly graded} if all minimal elements have the same rank and all maximal elements have the same rank. (Equivalently, a poset is strongly graded if all maximal chains in the poset have the same length; in this case the rank function $\rk$ may be recovered by setting $\rk(v)$ to be the length of a longest chain whose maximal element is $v$.)  Figure \ref{fig:flavors of posets} gives examples of posets with these properties.  The \textbf{height} of a weakly graded poset $P$ is the number of vertices in a longest chain in $P$.

\begin{figure}
\centering
\includegraphics[scale=.5]{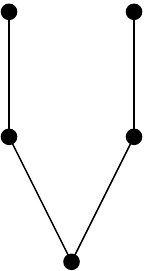}
\hspace{6ex}
\includegraphics[scale=.5]{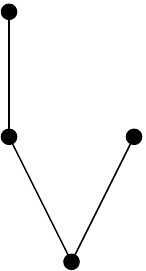}
\hspace{6ex}
\includegraphics[scale=.5]{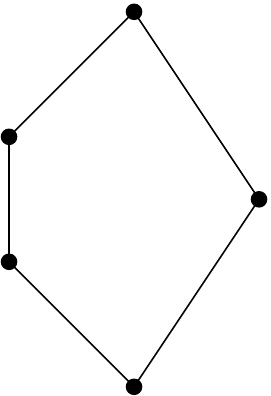}
\caption{Three posets: the first is strongly graded, the second is weakly graded but not strongly graded, and the third is not weakly graded.}
\label{fig:flavors of posets}
\end{figure}

A weakly graded poset $P$ of height $k + 1$ has \textbf{rank sets} $P(0)$, $P(1)$, \ldots, $P(k)$, where $P(i) = \{v\in P \mid \rk(v) = i\}$.  If $P$ is strongly graded, all the minimal elements are in $P(0)$ and all the maximal ones are in $P(k)$.  

Figure~\ref{fig:allsmall} shows all unlabeled weakly graded $({\bf 3+1})$-avoiding posets on four or fewer vertices.  Taking labelings into account, we see that for $n = 1, 2, 3$, and $4$ the number of weakly graded $({\bf 3+1})$-avoiding posets on $n$ vertices is $1$, $3$, $19$, and $195$, respectively.  Of these, respectively $1$, $3$, $13$ and $111$ are strongly graded.

\begin{figure}
\centering
\includegraphics[scale=.4]{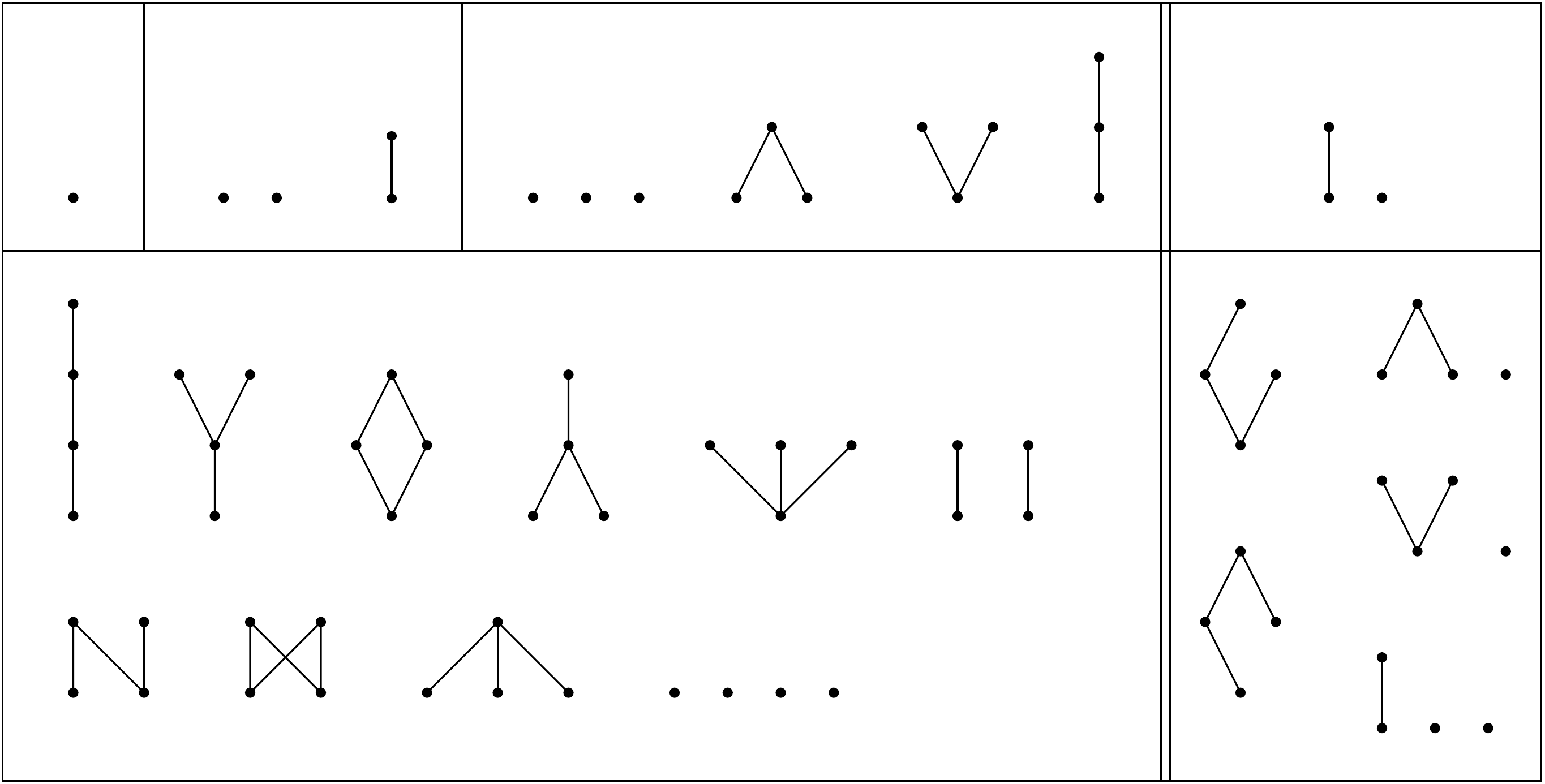}
\caption{All weakly graded $({\bf 3+1})$-avoiding posets on four or fewer vertices.  The doubled line separates the strongly graded posets (on the left) from the others.}
\label{fig:allsmall}
\end{figure}

In this paper, we avoid the use of the unmodified word ``graded'' because of an ambiguity in the literature: some sources (e.g., \cite{stanley:EC2}) use the word ``graded'' to mean ``strongly graded,'' while many others (e.g., \cite{klarner}) use ``graded'' to mean ``weakly graded.'' 

\section{Local Conditions}\label{sec:local conditions}

In this section, we give a concise local condition which is equivalent to $({\bf 3+1})$-avoidance for weakly graded posets.

Given a weakly graded poset $P$, call a vertex $v \in P$ of rank $i$ \textbf{up-seeing} if every vertex in $P(i+1)$ covers $v$. Similarly, call $v$ \textbf{down-seeing} if $v$ covers every vertex in $P(i-1)$. Let $\V(i)$ be the set of up-seeing vertices of rank $i$ and let $\Lambda(i)$ be the set of all down-seeing vertices of rank $i$.  (As a mnemonic, think of $v$ at the point of the $\V$ or $\Lambda$, with lots of edges going respectively up or down in the Hasse diagram of the poset.)  These definitions are illustrated in Figure \ref{fig:up and down seeing}.

\begin{figure}
\centering
\scalebox{.5}{\input{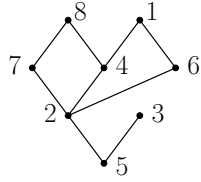}}
\caption{In the (weakly graded) poset pictured, the vertices labeled $1$, $8$, $4$, $2$ and $5$ are up-seeing and the vertices labeled $2$, $3$ and $5$ are down-seeing.  The vertices labeled $6$ and $7$ are neither up- nor down-seeing.}
\label{fig:up and down seeing}
\end{figure}

Call a vertex that is both up- and down-seeing an \textbf{all-seeing} vertex. Also, call a weakly graded poset $P$ \textbf{vigilant} if every vertex of $P$ is up-seeing, down-seeing, or all-seeing. One key consequence of our next result, Theorem~\ref{thm:3+1 locality v2}, is that in our study of graded $({\bf 3+1})$-avoiding posets it suffices to only consider vigilant posets.

\begin{theorem}
\label{thm:3+1 locality v2}
For a weakly graded poset $P$, the following are equivalent:
\begin{enumerate}[I.]
\item $P$ is $({\bf 3+1})$-avoiding;
\item $P$ is vigilant and every two vertices $v$, $w$ such that $\rk(w) - \rk(v) \geq 2$ are comparable;
\item $P$ is vigilant and every two vertices $v$, $w$ such that $\rk(w) - \rk(v) = 2$ are comparable.
\end{enumerate}
\end{theorem}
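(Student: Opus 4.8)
The plan is to prove the cyclic chain of implications $\text{I} \Rightarrow \text{II} \Rightarrow \text{III} \Rightarrow \text{I}$, of which only the last is substantive.

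For $\text{I} \Rightarrow \text{II}$: suppose $P$ is $({\bf 3+1})$-avoiding. If some vertex $v$ of rank $i$ is neither up- nor down-seeing, then there is a vertex $y \in P(i+1)$ not covering $v$ and a vertex $x \in P(i-1)$ not covered by $v$. Since ranks differ by exactly $1$, ``not covering'' here means ``incomparable,'' so $v$ is incomparable to both $x$ and $y$. Now I need a $3$-chain through $x$ and $y$ avoiding $v$: take any saturated chain from a rank-$0$ vertex up through $x$ to $y$ and one more step above $y$; since $v$ has rank $i$ strictly between the relevant ranks, a little care (choosing the chain to pass through $x$ and $y$) produces a $3$-element chain none of whose elements can be $v$, and one checks $v$ is incomparable to all three. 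This gives a copy of ${\bf 3+1}$, a contradiction, so $P$ is vigilant. For the comparability condition, if $\rk(w) - \rk(v) \geq 2$ and $v,w$ are incomparable, pick a saturated chain from a minimal vertex up to $w$; it contains vertices of every rank from $0$ to $\rk(w)$, in particular three consecutive ones at ranks $\rk(v)-1, \rk(v), \rk(v)+1$ (adjusting near the ends), none equal to $v$ and none comparable to $v$ would need to be checked — actually the cleanest argument is to take a $3$-chain contained in $\{$rank $<\rk(v)\} \cup \{w\}$ or $\{w\} \cup \{$higher ranks$\}$ together with two neighbors of $w$ on a saturated chain, so that all three lie at ranks avoiding $\rk(v)$; incomparability with $v$ follows since anything comparable to $v$ and of rank $\rk(w)$ would force $v < w$. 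This step is mostly bookkeeping once one is careful to keep the $3$-chain away from rank $\rk(v)$.

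The implication $\text{II} \Rightarrow \text{III}$ is trivial since condition III is a weakening of II. The heart of the proof is $\text{III} \Rightarrow \text{I}$: assuming $P$ is vigilant and any two vertices whose ranks differ by exactly $2$ are comparable, I must show $P$ has no copy of ${\bf 3+1}$. Suppose for contradiction that $x < y < z$ with $w$ incomparable to all three. The key observation is that in a weakly graded poset the ranks of $x, y, z$ are strictly increasing, so $\rk(z) - \rk(x) \geq 2$; combined with the hypothesis restricted to rank-difference exactly $2$, I want to pin down $\rk(w)$ relative to $\rk(x), \rk(y), \rk(z)$. If $\rk(w) \leq \rk(x) - 2$ then $w$ and $z$ (or $w$ and $x$) have rank difference $\geq 2$; the subtlety is that III only directly gives comparability for difference \emph{exactly} $2$, so I first need the easy lemma that vigilance plus the ``difference $2$'' condition upgrades to comparability for \emph{all} differences $\geq 2$ — proved by induction on the rank difference, using a vertex of intermediate rank on a saturated chain and transitivity (this is essentially the content of why III implies the comparability clause of II). Granting that, $\rk(w)$ must lie within distance $1$ of each of $\rk(x), \rk(y), \rk(z)$, which forces $\rk(y) - 1 \leq \rk(w) \leq \rk(y)+1$ and also forces $\rk(z) - \rk(x) = 2$, i.e.\ $x \covered y \covered z$ with $\rk(w) = \rk(y)$. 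But then $x \in P(\rk(w)-1)$ is not covered by the rank-$\rk(w)$ vertex $w$, so $w$ is not down-seeing, and $z \in P(\rk(w)+1)$ does not cover $w$, so $w$ is not up-seeing; hence $w$ is neither, contradicting vigilance.

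The main obstacle is the first step of $\text{III}\Rightarrow\text{I}$: carefully handling the edge cases near the top and bottom ranks when extracting the needed $3$-chains and when running the induction that promotes ``comparable at rank-distance $2$'' to ``comparable at rank-distance $\geq 2$,'' and making sure the incomparability of $w$ with chain elements is genuinely used to locate $\rk(w)$ rather than assumed. Once the rank of $w$ is squeezed to equal $\rk(y)$ with $x,y,z$ in consecutive ranks, the contradiction with vigilance is immediate. I expect the whole argument to be short but to require disciplined case analysis on where $\rk(w)$ sits.
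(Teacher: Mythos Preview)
Your argument for $\text{III} \Rightarrow \text{I}$ is correct and matches the paper: the paper splits it into $\text{III} \Rightarrow \text{II}$ (your ``easy lemma'' upgrading rank-difference $2$ to $\geq 2$) followed by $\text{II} \Rightarrow \text{I}$, and the endgame---squeezing $\rk(w)$ to equal $\rk(y)$ with $x \covered y \covered z$ and then observing that $w$ is neither up- nor down-seeing---is exactly what the paper does.

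The genuine gap is in $\text{I} \Rightarrow \text{II}$, and it is the opposite of what you flagged as the main obstacle. For vigilance you produce $x \in P(i-1)$ and $y \in P(i+1)$ both incomparable to $v$ and then propose to run a saturated chain ``through $x$ to $y$.'' But nothing forces $x < y$; these two vertices were chosen only for their incomparability with $v$, and there is no reason they lie on a common chain. Without that, you cannot manufacture the copy of ${\bf 3+1}$ directly. Your comparability argument in the same implication is also off: in a saturated chain ending at $w$, the vertex at rank $\rk(v)-1$ may well be covered by $v$, so the $3$-chain at ranks $\rk(v)-1,\rk(v),\rk(v)+1$ need not be entirely incomparable to $v$. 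The workable choice is the $3$-chain at ranks $\rk(v),\rk(v)+1,\rk(v)+2$ on that saturated chain: each of these three vertices lies below $w$, so $v$ cannot be below any of them (else $v < w$), and by rank $v$ cannot be strictly above any of them.

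The paper repairs both issues by reversing the order within $\text{I} \Rightarrow \text{II}$: it first proves comparability at rank-distance $\geq 2$ directly from $({\bf 3+1})$-avoidance (building a $3$-chain at the appropriate ranks and applying avoidance twice), and only then proves vigilance. Once comparability is available, your $x$ and $y$ automatically satisfy $x < y$, so some $v' \in P(i)$ lies strictly between them, and $\{x,v',y\}$ together with $v$ is the desired ${\bf 3+1}$.
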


\begin{proof}
It is clear that \textit{II} implies \textit{III}, so we will show that \textit{III} implies \textit{II}, that \textit{II} implies \textit{I}, and that \textit{I} implies \textit{II}. 

$\textit{III} \Rightarrow \textit{II}$: Let $P$ be a poset satisfying the conditions in \textit{III}.  We show that every two vertices whose ranks differ by $3$ are comparable; the result follows by induction.  Choose vertices $v$ of rank $i$ and $w$ of rank $i + 3$.  Since there is a vertex of rank $i + 3$, there must be at least one vertex $z$ of rank $i + 1$, and by \textit{III} we have $w > z$.  Since $P$ is graded, there is some vertex $y$ of rank $i + 2$ such that $w > y > z$.  But also $y > v$ by \textit{III}, so $w > v$, as desired.

$\textit{II} \Rightarrow \textit{I}$: Let $P$ be a poset satisfying the conditions in \textit{II}; we show $P$ avoids ${\bf 3+1}$.  Consider any $3$-chain $x < y < z$ in $P$ and any other vertex $w \in P$. We claim that $w$ is comparable to at least one of $x$, $y$, $z$.  By the defining properties of $P$, if $\rk(w) < \rk(z) - 1$ then $w < z$ while if $\rk(w) > \rk(x) + 1$ then $w > x$, and in either case we have our result.  The only remaining case is $\rk(z) - 1 = \rk(w) = \rk(x) + 1$.  In this case, since $w$ is either up- or down-seeing, we conclude that $w$ is comparable to at least one of $x$ and $z$.  Thus, $P$ avoids ${\bf 3+1}$, as desired.

$\textit{I} \Rightarrow \textit{II}$: Let $P$ be a weakly graded $({\bf 3 + 1})$-avoiding poset.  First, we show that two vertices whose ranks differ by $2$ or more are comparable. Choose vertices $u$ and $w$ at ranks $i$ and $j$ respectively with $j - i \geq 2$.  Since there are vertices at ranks at least $i + 2$, there must be a chain $x \covered y \covered z$ with $x$ at rank $i$.  Because $P$ avoids $\bf 3+1$, $u$ must be comparable to at least one of these vertices and so in particular $u < z$.  Then there is a chain $u \covered v \covered z$ in $P$, and by $({\bf 3+1})$-avoidance we have that $w$ is comparable to some member of this chain and so finally $w > v$ as desired.

Second, we show that $P$ is vigilant.  Suppose for contradiction that we have a vertex $v$ of rank $i$ that is neither up- nor down-seeing. This means $v \not \in \Lambda(i) \cup \V(i)$.  Then there exist vertices $u$, $w$ such that $\rk(u) = \rk(v) - 1$, $\rk(w) = \rk(v) + 1$, and $v$ is incomparable to both $u$ and $w$.  But by the preceding paragraph, $u < w$, and so there is some vertex $v'$ of rank $i$ such that $u < v' < w$.  This chain together with $v$ is a copy of ${\bf 3+1}$ in $P$.  This is a contradiction, so $P$ is vigilant. 
\end{proof}

We introduce the following convention for representing vigilant posets: vertices that are all-seeing are represented by squares, vertices that are up-seeing are represented by downwards-pointing triangles, and vertices that are down-seeing are represented by upwards-pointing triangles.  (Thus, each vertex has horizontal edges on the sides on which it is connected to all vertices.)  This convention is illustrated in Figure~\ref{fig:vertexconvention}.

\begin{figure}
\centering
\includegraphics[scale=.5]{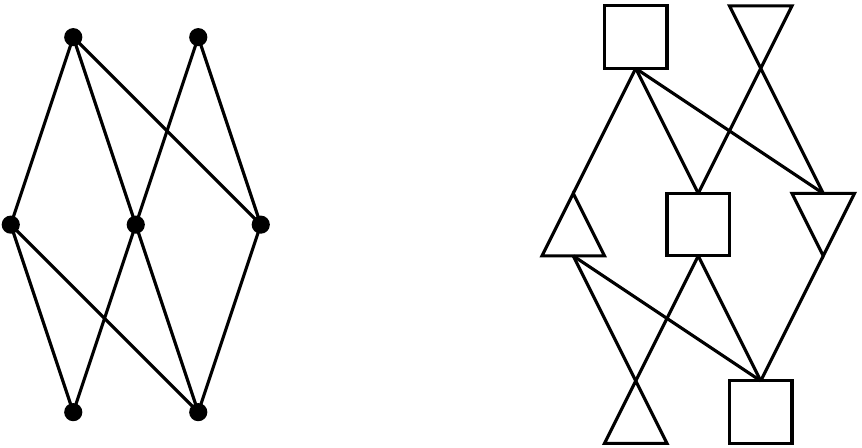}
\caption{The Hasse diagram for the vigilant poset at left will be displayed as the image at right: all-seeing vertices are represented as squares, other vertices as triangles.}
\label{fig:vertexconvention}
\end{figure}

\section{Simplifications}\label{sec:simplifications}

In this section, we introduce four operations that allow us to count vigilant posets by working instead with simpler objects.  We show that $({\bf 3+1})$-avoidance will be mostly compatible with these simplifications, reducing the problem of enumerating graded $({\bf 3+1})$-avoiding posets basically to studying vigilant posets of height $2$.  In Section~\ref{sec:trimming} we work with weakly graded posets, while in Sections~\ref{sec:layering} and~\ref{sec:sticking and gluing} we restrict ourselves to strongly graded posets.  (We will return to weakly graded posets in Section~\ref{sec:weakly graded posets}.)

\subsection{Trimming}\label{sec:trimming}

We call a vigilant poset $P$ \textbf{trimmed} if it has the following properties:
\begin{itemize}
\item every rank has at most one all-seeing vertex,
\item the all-seeing vertices are unlabeled, and
\item the other $m$ vertices are labeled with $[m]$.
\end{itemize}

\begin{figure}
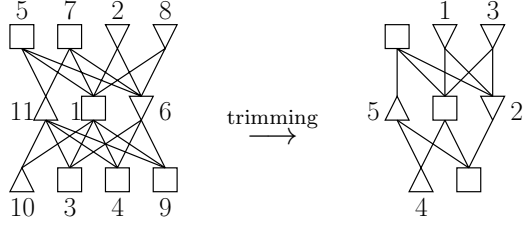

\centering
\scalebox{.5}{\input{untrimmed3+1avoidingtex.tex}} \hspace{2ex} \raisebox{.4in}{$\overset{\text{trimming}}{\longrightarrow}$} \hspace{2ex} \scalebox{.5}{\input{trimmed3+1avoidingtex.tex}}
\caption{A strongly graded $({\bf 3+1})$-avoiding poset and the associated trimmed poset.}
\label{fig:trimming}
\end{figure}

Given a weakly graded poset $P$, there is a naturally associated trimmed poset, denoted $\trim(P)$, that we get by removing the all-seeing vertices from $P$, adding a single unlabeled all-seeing vertex to every rank set from which we removed all-seeing vertices, and relabeling the other vertices so as to preserve the relative order of labels.  Figure~\ref{fig:trimming} provides one illustration of this operation.

\begin{prop}
\label{prop:3+1 behavior trimming}
The weakly graded vigilant poset $P$ avoids ${\bf 3+1}$ if and only if $\trim(P)$ does.
\end{prop}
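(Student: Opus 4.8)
The plan is to deduce this from the local characterization of $({\bf 3+1})$-avoidance in Theorem~\ref{thm:3+1 locality v2}. The first step is to observe that $\trim(P)$ is again a weakly graded vigilant poset. Trimming preserves the rank of every surviving vertex (and places each new vertex at a prescribed rank); deleting the old all-seeing vertices cannot destroy the up- or down-seeing status of any other vertex, since removing vertices only removes potential obstructions and the reinserted all-seeing vertices cover, and are covered by, precisely the vertices required; and the inserted vertices are all-seeing by construction. One also checks that the resulting relation is a genuine partial order — an all-seeing vertex has its comparabilities with adjacent ranks dictated, and a trimmed poset has at most one all-seeing vertex per rank, so nothing conflicts — and that each connected component still has minimal rank $0$. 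Granting this, Theorem~\ref{thm:3+1 locality v2} (equivalence of I and III) reduces the proposition to the purely combinatorial claim that $P$ has an incomparable pair of vertices at rank-distance exactly $2$ if and only if $\trim(P)$ does.

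To prove that claim I would use the evident correspondence between the two posets: the non-all-seeing vertices of $P$ biject with those of $\trim(P)$ with ranks preserved, and this bijection preserves comparability, because a saturated chain between two non-all-seeing vertices in one poset transports to the other by swapping each all-seeing vertex it passes through for the unique all-seeing vertex of the target poset at that rank. Two elementary facts about an arbitrary weakly graded vigilant poset then close the gap. (A) An all-seeing vertex of rank $i$ is comparable to every vertex of rank $i\pm1$ by definition, and two all-seeing vertices at rank-distance $2$ are always comparable — they are connected through any vertex of the intermediate rank, which is nonempty because the occupied ranks of each component form an initial segment $\{0,1,\dots\}$, and that vertex lies strictly between them. (B) If an all-seeing vertex $a$ of rank $i$ is incomparable to a vertex $u$ of rank $i+2$ (the rank $i-2$ case being symmetric), then no vertex of rank $i+1$ is all-seeing (such a vertex would lie strictly between $a$ and $u$) and $u$ is incomparable to every vertex $t$ of rank $i+1$ (otherwise $a<t<u$). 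With (A) and (B): an incomparable rank-distance-$2$ pair of non-all-seeing vertices transports via the bijection; a pair of two all-seeing vertices never occurs; and for a mixed pair $\{a,u\}$ as in (B), trimming leaves rank $i+1$ and its relations with $u$ unchanged (that rank has no all-seeing vertex), so $u$ stays incomparable to all of rank $i+1$ in $\trim(P)$, whence the all-seeing vertex of $\trim(P)$ at rank $i$ is incomparable to $u$ there. The same chain of reasoning run in reverse recovers an incomparable pair in $P$ from one in $\trim(P)$, completing the proof.

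The part I expect to require the most care is making both directions of this equivalence go through without ever tacitly assuming the poset is already $({\bf 3+1})$-avoiding, since that would be circular. This is why (A) and (B) are stated for a general weakly graded vigilant poset, and why one must avoid the false shortcut of asserting that an all-seeing vertex is comparable to everything outside its own rank: a minimal vertex two ranks above an all-seeing vertex can perfectly well be incomparable to it, and fact (B) is exactly the device for bookkeeping that situation. The verification in the first paragraph that $\trim(P)$ is weakly graded and vigilant is routine but also deserves attention, particularly the point that trimming does not strip any component of its rank-$0$ vertices.
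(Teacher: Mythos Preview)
Your proof is correct and follows exactly the same approach as the paper: both reduce the statement via Theorem~\ref{thm:3+1 locality v2} to verifying that condition~\textit{III} (vigilance together with comparability of all rank-distance-$2$ pairs) is unaffected by trimming. The paper dispatches this in a single sentence (``routine to check''), whereas you have carefully written out that routine check, including the case analysis via your facts (A) and (B); your caution about not assuming that an all-seeing vertex is comparable to everything outside its own rank is well placed, since that does fail in vigilant posets that contain a ${\bf 3+1}$.
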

\begin{proof}
It is routine to check that neither of the conditions of Theorem~\ref{thm:3+1 locality v2}(\textit{III}) is affected by the trimming map.
\end{proof}

Since we lose very little information when we replace the poset $P$ by the trimmed poset $\trim(P)$, Proposition~\ref{prop:3+1 behavior trimming} suggests that we can reduce the enumeration of labeled graded $({\bf 3+1})$-avoiding posets to the enumeration of trimmed $({\bf 3+1})$-avoiding posets.  The following proposition makes this intuition precise.  Let $w_n$ be the number of weakly graded $({\bf 3+1})$-avoiding posets on $n$ vertices and let
\[
W(x) = \sum_n w_n \frac{x^n}{n!}
\] be the exponential generating function for labeled weakly graded $({\bf 3+1})$-avoiding posets.  
\begin{prop}
\label{prop:trimming gf}
The exponential generating function for labeled weakly graded $({\bf 3+1})$-avoiding posets is
\[
W(x) = \sum_{n, r} a_{n, r} \frac{x^n}{n!} (e^x - 1)^r.
\]
where $a_{n, r}$ is the number of trimmed $({\bf 3+1})$-avoiding posets with $r$ all-seeing vertices and $n$ other vertices.

An analogous result holds if we restrict attention to the strongly graded posets.
\end{prop}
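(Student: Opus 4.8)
The plan is to set up a bijection (or more precisely, a weighted correspondence) between weakly graded $({\bf 3+1})$-avoiding posets and pairs consisting of a trimmed $({\bf 3+1})$-avoiding poset together with a choice of how to ``re-inflate'' each of its all-seeing vertices into a nonempty labeled antichain's worth of all-seeing vertices. The key structural input is Proposition~\ref{prop:3+1 behavior trimming}: since $\trim$ preserves $({\bf 3+1})$-avoidance in both directions, the fibers of $\trim$ (restricted to $({\bf 3+1})$-avoiding posets) are exactly the sets of posets that trim to a given trimmed $({\bf 3+1})$-avoiding poset $Q$. So I would begin by analyzing a single fiber: given a trimmed poset $Q$ with $r$ all-seeing vertices and $n$ labeled non-all-seeing vertices, what are the labeled posets $P$ on some vertex set with $\trim(P) = Q$? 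Each all-seeing vertex of $Q$ may be replaced by a group of $k_i \geq 1$ all-seeing vertices on the same rank (all of which, being all-seeing, have identical up-sets and down-sets and are mutually incomparable), and the non-all-seeing vertices of $Q$ carry over unchanged as a poset but must be relabeled consistently with the new total vertex count.

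Next I would count such a fiber as an exponential-generating-function statement. If $P$ has $n$ non-all-seeing vertices and $m = k_1 + \cdots + k_r$ all-seeing vertices, then $P$ has $n + m$ vertices total, and the number of labeled posets $P$ in the fiber over $Q$ with a prescribed partition of the label set into the "non-all-seeing" block (of size $n$) and the $r$ "all-seeing" blocks (of sizes $k_1, \dots, k_r \geq 1$, one per all-seeing vertex of $Q$) is exactly $1$ once we also remember that within $Q$ the non-all-seeing part is rigidly labeled by $[n]$. Summing $\frac{x^{n+m}}{(n+m)!}$ over all labeled posets in the fiber, and using the standard exponential-formula bookkeeping — choose which labels go to the $n$ non-all-seeing vertices (contributing the $\frac{x^n}{n!}$ from $Q$'s own labeling), then distribute the remaining labels among the $r$ all-seeing groups, each group being a nonempty labeled set and hence contributing a factor $\sum_{k \geq 1} \frac{x^k}{k!} = e^x - 1$ — gives that the fiber over $Q$ contributes exactly $\frac{x^n}{n!}(e^x - 1)^r$ to $W(x)$. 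Summing over all trimmed $({\bf 3+1})$-avoiding posets $Q$, grouped by their parameters $(n, r)$, yields $W(x) = \sum_{n,r} a_{n,r} \frac{x^n}{n!}(e^x-1)^r$, which is the claim.

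The step I expect to be the main obstacle is verifying cleanly that the fiber decomposition is exactly as described — i.e., that $\trim(P) = \trim(P')$ forces $P$ and $P'$ to differ only by the multiplicities of all-seeing vertices and by relabeling, with no extra coincidences or constraints coming from the poset structure. This requires observing that (a) all all-seeing vertices on a given rank of a vigilant poset have the same up-set and down-set and are pairwise incomparable, so a block of them is interchangeable with a single representative plus a multiplicity, and (b) the relabeling clause in the definition of $\trim$ (preserving relative order of the surviving labels) means each fiber element corresponds to exactly one way of choosing the label blocks — there is no overcounting from automorphisms because we are working with labeled objects throughout. Once this rigidity is pinned down, the generating-function manipulation is the routine application of the exponential formula sketched above. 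The strongly graded case is identical: the trimming operation does not change which elements are minimal or maximal nor their ranks, so $P$ is strongly graded iff $\trim(P)$ is, and the same fiber-counting argument applies verbatim with $a_{n,r}$ replaced by its strongly graded analogue.
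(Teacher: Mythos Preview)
Your proposal is correct and follows essentially the same approach as the paper's own proof: both identify each labeled weakly graded $({\bf 3+1})$-avoiding poset with a trimmed poset together with a choice of nonempty label sets for the all-seeing vertex at each of the $r$ ranks, then invoke the standard exponential-formula bookkeeping to obtain the factor $(e^x-1)^r$. Your write-up is more explicit about why the fiber of $\trim$ has the claimed form (points (a) and (b)), but the underlying argument is the same.
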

\begin{proof}
A weakly graded $({\bf 3+1})$-avoiding poset $P$ is uniquely determined by the associated trimmed poset $T$ and the set of labels for the all-seeing vertices at each rank.  Moreover, any trimmed $({\bf 3+1})$-avoiding poset $T$ with all-seeing vertices at $r$ levels together with an appropriate tuple of $r$ nonempty sets of labels yields a weakly graded $({\bf 3+1})$-avoiding poset.  Thus, by standard rules for generating functions (or equivalently from species-type considerations as in \cite[Section 4]{jackson-moffatt-morales}), the generating function for weakly graded $({\bf 3+1})$-avoiding posets with all-seeing vertices at exactly $r$ ranks is $(e^x - 1)^r \cdot \sum_n a_{n, r} \frac{x^n}{n!}$.  Summing over $r$ gives the result.
\end{proof}

\subsection{Ordinal sums}\label{sec:layering}

Suppose we have two trimmed strongly graded posets $P_1$ and $P_2$ of heights $a$ and $b$, respectively. We can take the \textbf{ordinal sum} of $P_1$ and $P_2$ by letting the lowest-ranked elements in $P_2$ cover all highest-ranked elements in $P_1$ and relabeling in a way consistent with the labelings of $P_1$ and $P_2$.  (Thus, there are many ways to take ordinal sums of $P_1$ and $P_2$; all the resulting posets are isomorphic under relabeling.)  We denote one of the possible resulting posets of height $a+b$ by $P_1 \layer P_2$.  See for example Figure~\ref{fig:layering}.  In the context of vigilant posets, the ordinal sum is an especially nice operation because a vertex in $P_1$ or $P_2$ which is up-seeing and/or down-seeing retains that property in $P_1 \layer P_2$.

\begin{figure}
\centering
\scalebox{.5}{\input{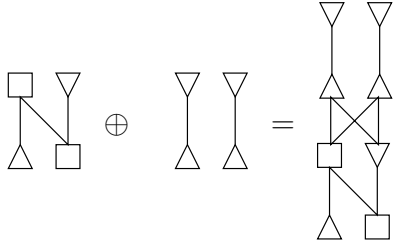}}
\caption{An ordinal sum of two sum-indecomposable posets.  (Labels are suppressed for readability.)}
\label{fig:layering}
\end{figure}

Call a nonempty strongly graded poset $P$ with height $k$ \textbf{sum-indecomposable} if $P$ is trimmed and there is no $i < k - 1$ for which every vertex in $P(i)$ is up-seeing (equivalently, there is no $i > 0$ for which every vertex in $P(i)$ is down-seeing). This word choice is motivated by the existence of a decomposition of trimmed posets into sum-indecomposables.

\begin{prop}
\label{prop:weak decomposition}
A trimmed strongly graded poset $P$ can be written uniquely as
\[
P = P_1 \layer P_2 \layer \cdots \layer P_m,
\]
for a sequence $(P_1, P_2, \ldots, P_m)$ of sum-indecomposable posets. 
\end{prop}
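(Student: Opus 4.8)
The plan is to isolate the ranks at which a trimmed strongly graded poset $P$ genuinely decomposes as an ordinal sum, and to show that these ranks both produce a decomposition into sum-indecomposables and are forced by any such decomposition.

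Fix a trimmed strongly graded $P$ of height $k$ and call a rank $i$ with $0 \le i \le k-2$ a \emph{breaking rank} if every vertex of $P(i)$ is up-seeing; by the equivalence recorded after the definition of sum-indecomposability this is the same as every vertex of $P(i+1)$ being down-seeing. The key step is a \emph{splitting lemma}: if $i$ is a breaking rank then $P = P^{\le i} \layer P^{\ge i+1}$, where $P^{\le i}$ and $P^{\ge i+1}$ are the restrictions of $P$ to ranks $0,\dots,i$ and to ranks $i+1,\dots,k-1$, with labels renormalized to preserve relative order. The only nonroutine point is that every $v$ with $\rk(v)\le i$ lies below every $w$ with $\rk(w)\ge i+1$: since $P$ is strongly graded, $v$ and $w$ lie on maximal chains, so $v \le u$ for some $u\in P(i)$ and $w \ge u'$ for some $u'\in P(i+1)$; as $i$ is a breaking rank $u \covered u'$, whence $v \le u < u' \le w$. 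The converse — that an ordinal sum has a breaking rank at its junction — is immediate from the definition of $\layer$. I will also use the elementary facts that restriction of a strongly graded poset to a contiguous block of ranks is strongly graded, and that restriction preserves cover relations between adjacent ranks (so an interior breaking rank of a restriction is a breaking rank of $P$).

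For existence, list the breaking ranks of $P$ as $i_1 < \cdots < i_{m-1}$, set $i_0=-1$ and $i_m=k-1$, and let $P_j$ be the restriction of $P$ to ranks $i_{j-1}+1,\dots,i_j$ with labels renormalized. Iterating the splitting lemma gives $P = P_1 \layer \cdots \layer P_m$. Each $P_j$ is strongly graded; it is sum-indecomposable because an interior breaking rank of $P_j$ would be a breaking rank of $P$ strictly between the consecutive breaking ranks $i_{j-1}$ and $i_j$, which is impossible. One further checks that each $P_j$ is trimmed: its interior ranks are unchanged, while at the top (resp.\ bottom) rank of $P_j$ every vertex is vacuously up-seeing (resp.\ down-seeing) in $P_j$, so an all-seeing vertex there is a vertex that was already down-seeing (resp.\ up-seeing) in $P$ — using the breaking-rank symmetry — of which $P$, being trimmed, has at most one; when $P_j$ has height one both constraints apply simultaneously and force $P_j$ to be a single unlabeled all-seeing vertex. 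This last verification is routine, in the spirit of Proposition~\ref{prop:3+1 behavior trimming}.

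For uniqueness, suppose $P = Q_1 \layer \cdots \layer Q_m$ with each $Q_\ell$ sum-indecomposable, and let $c_\ell$ be the sum of the heights of $Q_1,\dots,Q_\ell$, so $c_0=0$, $c_m=k$, and $Q_\ell$ occupies ranks $c_{\ell-1},\dots,c_\ell-1$ of $P$. The junction ranks $c_1-1,\dots,c_{m-1}-1$ are breaking ranks of $P$ by the converse half of the splitting lemma. Conversely, any breaking rank $i$ of $P$ not of this form lies strictly inside some block $[c_{\ell-1},c_\ell-1]$ and is therefore an interior breaking rank of $Q_\ell$ (cover relations between adjacent ranks being preserved by restriction), contradicting sum-indecomposability. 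Hence the breaking ranks of $P$ are exactly $\{c_1-1,\dots,c_{m-1}-1\}$; this pins down $m$ and all the $c_\ell$, and so forces each $Q_\ell$ to equal the restriction of $P$ to ranks $c_{\ell-1},\dots,c_\ell-1$ with labels renormalized — that is, the poset $P_\ell$ constructed above.

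I expect the main obstacle to be bookkeeping rather than ideas: keeping the index conventions straight (heights versus top ranks, the virtual boundaries $i_0=-1$ and $i_m=k-1$) and being precise about what ``uniquely'' means, given that $\layer$ is only defined up to relabeling — this is exactly what the renormalization convention for the $P_j$ handles. The only genuinely content-bearing step is the splitting lemma, where strong gradedness is essential; the trimmedness check for the pieces, though fiddly, is of the same ``routine'' character as Proposition~\ref{prop:3+1 behavior trimming}.
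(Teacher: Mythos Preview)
Your proof is correct and follows essentially the same approach as the paper: identify the ranks at which every vertex is up-seeing and split there, then observe that the pieces are sum-indecomposable and that any decomposition must break at exactly those ranks. The paper's own proof is a three-sentence inductive sketch (peel off the first piece at the smallest such rank, recurse, and declare uniqueness ``obvious''); you carry out the same idea globally and supply the details it omits --- the splitting lemma using strong gradedness, the trimmedness check for the pieces, and the explicit uniqueness argument --- all of which are sound.
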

\begin{proof}
Let $P$ be a trimmed strongly graded poset and let $k$ be the height of $P$.  Take the smallest rank $i$ for which $P(i)$ has all up-seeing vertices. If $i=k$, then $P$ is sum-indecomposable. Otherwise, we can write $P = P_1 \layer P'$, where $P_1$ has height $i+1$ and is sum-indecomposable by the minimality of $i$. Repeating this process gives us the desired sequence, which is obviously unique.
\end{proof}

\begin{prop}
\label{prop:3+1 behavior layering}
If a trimmed strongly graded poset $P$ decomposes into sum-indecomposable posets as $P = P_1 \layer \cdots \layer P_m$, then $P$ avoids ${\bf 3+1}$ if and only if all of the $P_i$ avoid ${\bf 3+1}$.
\end{prop}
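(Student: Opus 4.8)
The plan is to use the local characterization of Theorem~\ref{thm:3+1 locality v2}(\textit{III}): a weakly graded vigilant poset avoids ${\bf 3+1}$ if and only if every pair of vertices whose ranks differ by exactly $2$ is comparable. Since the ordinal sum of vigilant posets is vigilant (each vertex keeps its up-/down-seeing status, as noted in the text preceding the definition of sum-indecomposable), it suffices to check this rank-$2$ comparability condition for $P$ in terms of the $P_i$. So the whole argument reduces to tracking which pairs of vertices at rank-distance $2$ arise in $P = P_1 \layer \cdots \layer P_m$ and where they live relative to the blocks.

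First I would set up coordinates: if $P_i$ has height $k_i$, then in $P$ the block $P_i$ occupies ranks $s_{i-1}, s_{i-1}+1, \ldots, s_i - 1$ where $s_i = k_1 + \cdots + k_i$ (shifting the internal rank function of $P_i$ up by $s_{i-1}$). Now take two vertices $v, w$ of $P$ with $\rk(w) - \rk(v) = 2$. There are two cases. If $v$ and $w$ lie in the same block $P_i$, then their rank difference within $P_i$ is also $2$, so $v$ and $w$ are comparable in $P$ iff they are comparable in $P_i$; hence this case is governed exactly by whether $P_i$ satisfies the condition. If $v$ and $w$ lie in different blocks — say $v \in P_i$ and $w \in P_j$ with $i < j$ — then every element of $P_i$ lies below every element of $P_j$ in the ordinal sum, so $v < w$ automatically, regardless of the $P_i$. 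The one thing to double-check is that the rank difference of $2$ does not force $v, w$ into the ``same block'' situation in some degenerate way; but blocks can have height $1$, so genuinely cross-block rank-$2$ pairs do occur, and they are always comparable for free.

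Combining the cases: the rank-$2$ comparability condition holds for $P$ if and only if it holds inside each $P_i$ separately, which by Theorem~\ref{thm:3+1 locality v2} (applied to each $P_i$, each of which is vigilant by hypothesis — they are trimmed, hence vigilant) is equivalent to each $P_i$ avoiding ${\bf 3+1}$. Together with the observation that $P$ is vigilant, Theorem~\ref{thm:3+1 locality v2} then gives that $P$ avoids ${\bf 3+1}$ iff all the $P_i$ do.

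I do not expect any real obstacle here; the only mild subtlety is being careful that ``vigilant'' is preserved by $\layer$ (already remarked) and that the local condition is genuinely a condition only about same-rank-difference-$2$ pairs, so that cross-block pairs contribute nothing. Once the bookkeeping of rank ranges for the blocks is pinned down, the equivalence falls out immediately; the proof is essentially a one-line appeal to Theorem~\ref{thm:3+1 locality v2}(\textit{III}) plus the trivial remark that distinct blocks are fully comparable in an ordinal sum.
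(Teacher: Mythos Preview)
Your argument is correct, but it takes a different route from the paper's. You invoke Theorem~\ref{thm:3+1 locality v2}(\textit{III}) and reduce the question to the rank-$2$ comparability condition, then split into same-block and cross-block cases. The paper instead argues directly from the definition of a ${\bf 3+1}$ pattern: if some $P_i$ contains a copy of ${\bf 3+1}$, so does $P$; conversely, if $P$ contains a copy $u < v < w$ together with an incomparable $x$, then the block $P_i$ containing $x$ must also contain $u$, $v$, $w$, since any two vertices in distinct summands of an ordinal sum are comparable.

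Both proofs are short and hinge on the same trivial fact that cross-block pairs are automatically comparable. The paper's version is slightly more elementary in that it does not appeal to the local characterization at all (so it would work verbatim for ordinal sums of arbitrary posets, graded or not). Your version, by contrast, exploits the machinery already built and foreshadows how the rank-$2$ condition will be used later (e.g., in Theorem~\ref{thm:3+1 characterization}). One small simplification available to you: since $P$ is given as trimmed and the $P_i$ are sum-indecomposable (hence trimmed), vigilance of both is immediate from the definitions, so you need not separately argue that $\layer$ preserves vigilance.
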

\begin{proof}
One direction is trivial: if any of the $P_i$ contains a copy of ${\bf 3+1}$ then certainly $P$ does as well. For the other direction, suppose that $P$ contains a copy of ${\bf 3+1}$ with vertices $u<v<w$ and $x$. If $x \in P_i$, then $u$, $v$, and $w$ must also all be in $P_i$ (since two vertices from different $P_j$ must be comparable), so $P_i$ itself does not avoid ${\bf 3+1}$.
\end{proof}

Propositions \ref{prop:weak decomposition} and \ref{prop:3+1 behavior layering} simplify the problem of counting strongly graded $({\bf 3+1})$-avoiding posets: it now suffices to count sum-indecomposable posets and then count the ways to combine them by ordinal sum.  As we will see in Proposition~\ref{prop:final layering}, this is a simple task with generating functions.  Thus, we now turn our attention to enumerating sum-indecomposable $({\bf 3+1})$-avoiding posets.

\subsection{Sticking and Gluing}\label{sec:sticking and gluing}

In order to enumerate sum-indecomposable posets, we break them down into more manageable pieces that we call \emph{quarks}.    We show that quarks can be combined to make posets using two operations that we call \emph{sticking} and \emph{gluing}, that every sum-indecomposable poset can be written uniquely as a sticking and gluing of quarks, and that $({\bf 3+1})$-avoidance is encoded nicely in this decomposition.

Observe that every poset of height $1$ or $2$ is weakly graded and so naturally has a rank function that assigns all minimal vertices to rank $0$ and all other vertices to rank $1$.  A \textbf{quark} $Q$ is a pair $(P, r)$ of a poset $P$ and function $r: P \to \{0, 1\}$, with the following restrictions:
\begin{itemize}
\item $P$ has height $1$ or $2$ (and so consequently is weakly graded);
\item $P$ does not have both an up-seeing vertex at rank $0$ and a down-seeing vertex at rank $1$;
\item if $v \in P$ is not isolated then $r(v) = \rk(v)$;
\item there exist vertices $v_1$ and $v_2$ in $P$ such that $r(v_1) = 0$ and $r(v_2) = 1$.
\end{itemize}
Equivalently, thinking in terms of Hasse diagrams, one may view a quark as a bipartite graph with a designated bipartition of the vertices into nonempty lower and upper halves with the restriction that at most one part of the bipartition contains an all-seeing vertex.  

Given a vertex $v \in Q$, we say that $r(v)$ is the \textbf{rank} of $v$.  Note that two different quarks may have the same underlying poset, and that the underlying poset of a quark may have height $1$ even though the quark itself has two nonempty ranks. 

Given the close relationship between quarks and posets, we extend our poset terminology to this new context in the natural way.  Notably, the adjectives ``vigilant'' and ``trimmed'' have the same meaning for quarks as for posets, and since every quark is vigilant we use the same convention for displaying their vertices as was introduced for vigilant posets in Section~\ref{sec:local conditions}.

\begin{figure}
\begin{center}
\includegraphics[scale=.5]{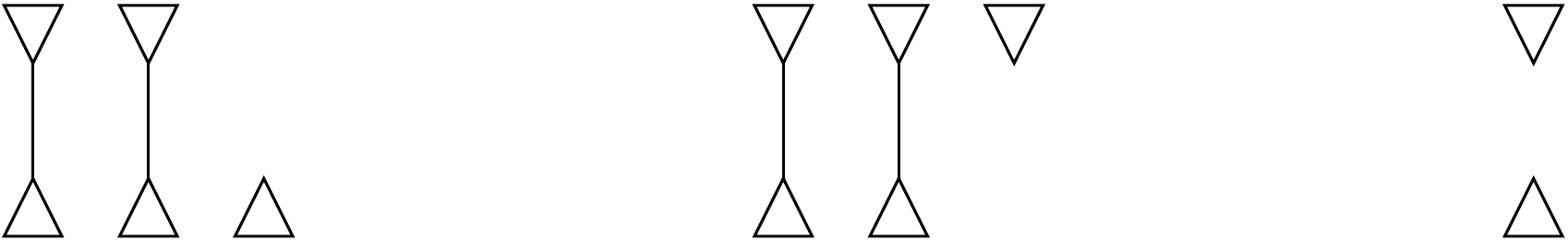}
\end{center}
\caption{Three quarks.  All three quarks are middle quarks; the left quark is also a bottom quark, while the center quark is also a top quark.  The left and center quarks have isomorphic underlying posets but are not isomorphic as quarks.  The right quark has underlying poset of height $1$ (with two vertices and no relations), while as a quark it has vertices at two different rank sets.}
\label{fig:quarks}
\end{figure}

There are three classes of quarks that will be of interest two us; see Figure~\ref{fig:quarks} for examples.  A \textbf{bottom quark} is a trimmed quark in which every isolated vertex (if any) is assigned to rank $0$ and there are no all-seeing vertices of rank $1$.  A \textbf{middle quark} is a quark with no all-seeing vertices of either rank.  A \textbf{top quark} is a trimmed quark in which every isolated vertex (if any) is assigned to rank $1$ and there are no all-seeing vertices of rank $0$.  Observe that every trimmed quark belongs to at least one of these classes and that many quarks belong to more than one of them.

We now introduce the two operations that can be used to build every sum-indecomposable poset of height larger than $2$ from quarks.  We first describe the operations for height-$3$ posets, and afterwards the general case.

Given a bottom quark $Q_0$ and a top quark $Q_1$, we say that a poset $P$ arises from \textbf{sticking} $Q_0$ and $Q_1$ if the following conditions hold:
\begin{itemize}
\item The vertex set of $P$ is the disjoint union of the vertex sets of $Q_0$ and $Q_1$.
\item For $i = 0, 1$, if $v, w \in Q_i$, then $v < w$ in $P$ if and only if $v < w$ in $Q_i$.
\item For $j = 0, 1$, if $v \in Q_0$ and $w \in Q_1$ have rank $j$ in their respective quarks then $v < w$ in $P$.
\item The only other order relations of $P$ are those that follow by transitivity.
\item The labeling of vertices of $P$ is consistent with the labelings of $Q_0$ and $Q_1$.
\end{itemize}
In an abuse of notation, we denote this relationship by $P = Q_0 \stick Q_1$.  Similarly, we say that $P$ arises from \textbf{gluing} $Q_0$ and $Q_1$, and we write $P = Q_0 \glue Q_1$, if the following conditions hold:
\begin{itemize}
\item $P$ has a (not necessarily induced) subposet $P' = Q_0 \stick Q_1$.
\item The rank set $P(1)$ has an additional (unlabeled) all-seeing vertex, the additional order relations implied by the presence of this vertex and transitivity, and no other order relations.
\end{itemize}

It is easy to check that posets of the form $Q_0 \stick Q_1$ and $Q_0 \glue Q_1$ are sum-indecomposable posets of height $3$.  Also observe that, as in the case of ordinal sums, a vertex in $Q_0$ or $Q_1$ that is up-seeing or down-seeing keeps this status after either gluing or sticking.  Figure~\ref{fig:stick and glue 2 quarks} shows an example of the sticking and gluing of two quarks.

\begin{figure}
\centering
\scalebox{.5}{\input{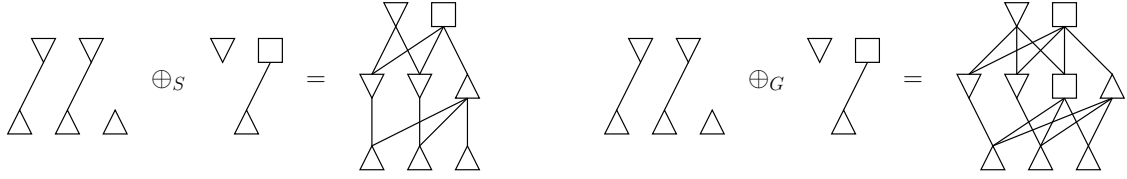}}
\caption{Sticking and gluing a bottom quark and a top quark to build sum-indecomposable posets.  (Labels are suppressed for readability.)}
\label{fig:stick and glue 2 quarks}
\end{figure}

Now we describe how to apply these operations to many quarks in order to create posets of larger height.  
\begin{defn}
Suppose we are given a bottom quark $Q_0$, middle quarks $Q_1$, \ldots, $Q_{k - 1}$, and a top quark $Q_k$.  For each choice $(\alpha_1, \ldots, \alpha_{k}) \in \{S, G\}^{k}$, we say that a trimmed poset $P$ is of the form $Q_0 \oplus_{\alpha_1} Q_1 \oplus_{\alpha_2} \cdots \oplus_{\alpha_{k}} Q_k$ if the following conditions hold:
\begin{itemize}
\item For $i \in \{0, \ldots, k + 1\}$, the $i$th rank set $P(i)$ consists of the disjoint union of $Q_i(0)$ and $Q_{i - 1}(1)$ and, if $\alpha_i = G$, an unlabeled all-seeing vertex.
\item For $i \in \{0, \ldots, k\}$, if $v, w \in Q_i$ then $v < w$ in $P$ if and only if $v \covered w$ in $Q_i$.
\item For $i \in \{0, \ldots, k - 1\}$ and $j \in \{0, 1\}$, if $v \in Q_i(j)$ and $w \in Q_{i + 1}(j)$ then $v \covered w$.
\item For $i \in \{0, \ldots, k - 2\}$, if $v \in Q_i(1)$ and $w \in Q_{i + 2}(0)$ then $v \covered w$.
\item All other order relations of $P$ follow by transitivity from those of the four preceding bullet points.
\item The labeling of vertices of $P$ is consistent with the labelings of the $Q_i$.
\end{itemize}
\end{defn}

As before, we denote this relation by $P = Q_0 \oplus_{\alpha_1} \cdots \oplus_{\alpha_k} Q_k$.  An example of a poset of height $4$ formed by sticking and gluing is shown in Figure~\ref{fig:stick and glue 3 quarks}.

\begin{figure}
\centering
\scalebox{.5}{\input{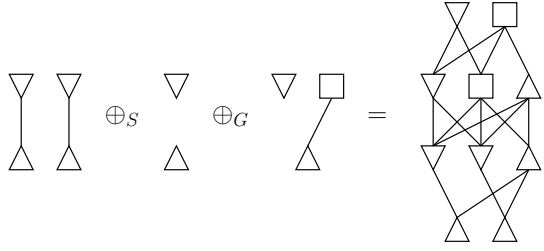}}
\caption{An example of sticking and gluing quarks to build a sum-indecomposable poset.  (Labels are suppressed for readability.)}
\label{fig:stick and glue 3 quarks}
\end{figure}

The quark decomposition is useful because it behaves nicely with resprect to up-seeing and down-seeing vertices.

\begin{prop}\label{prop:seeing preserved}
Suppose that $Q_0$ is a bottom quark, $Q_1$, \ldots, $Q_{k - 1}$ are middle quarks and $Q_k$ is a top quark, and 
\[
P = Q_0 \oplus_{\alpha_1} Q_1 \oplus_{\alpha_2} \cdots \oplus_{\alpha_{k}} Q_k.
\]
A vertex $v \in Q_i$ is up-seeing (respectively, down-seeing) in $Q_i$ if and only if it is up-seeing (respectively, down-seeing) in $P$. 
\end{prop}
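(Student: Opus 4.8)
The plan is to fix a vertex $v\in Q_i$ and verify the two biconditionals (for up-seeing and for down-seeing) directly, by first computing the rank of $v$ in $P$ and then inspecting the rank set of $P$ immediately above (respectively below) $v$. Before starting I would record three trivial facts: since a quark has only the two rank sets $Q_i(0)$ and $Q_i(1)$, every rank-$1$ vertex of a quark is vacuously up-seeing and every rank-$0$ vertex is vacuously down-seeing; two vertices at the same rank of a weakly graded poset are incomparable; and, straight from the definition of $P$, a vertex of $Q_i(0)$ has rank $i$ in $P$ while a vertex of $Q_i(1)$ has rank $i+1$ in $P$. I would also fix the convention that, in the descriptions of rank sets below, any summand indexed by a quark or a glue-parameter $\alpha_j$ that does not exist is simply omitted.

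Now take $v\in Q_i(0)$, so $\rk_P(v)=i$. To test up-seeing in $P$, partition $P(i+1)=Q_{i+1}(0)\sqcup Q_i(1)\sqcup\{g\}$, where $g$ is the unlabeled all-seeing vertex present exactly when $\alpha_{i+1}=G$. The ``same rank'' bullet of the definition forces every vertex of $Q_{i+1}(0)$ to cover $v$, and since $g$ is all-seeing of rank $i+1$ it covers everything of rank $i$, hence $v$; meanwhile, for $w\in Q_i(1)$ the ``within $Q_i$'' bullet says $w$ covers $v$ in $P$ if and only if $v\covered w$ in $Q_i$. Therefore $v$ is up-seeing in $P$ iff every vertex of $Q_i(1)$ covers $v$ in $Q_i$, i.e.\ iff $v$ is up-seeing in $Q_i$. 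Down-seeing in this case is automatic on both sides: inspecting $P(i-1)=Q_{i-1}(0)\sqcup Q_{i-2}(1)\sqcup\{g'\}$, the ``same rank'' bullet, the ``staircase'' bullet, and the all-seeing property of $g'$ make $v$ cover all of $Q_{i-1}(0)$, all of $Q_{i-2}(1)$, and $g'$ respectively, so $v$ is down-seeing in $P$, matching the fact that $v$ is vacuously down-seeing in $Q_i$.

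The case $v\in Q_i(1)$, with $\rk_P(v)=i+1$, is entirely symmetric. For down-seeing one inspects $P(i)=Q_i(0)\sqcup Q_{i-1}(1)\sqcup\{g\}$: the pieces $Q_{i-1}(1)$ and $g$ are always covered by $v$ by the ``same rank'' bullet and the all-seeing property, and the piece $Q_i(0)$ reduces, via the ``within $Q_i$'' bullet, to exactly the condition that $v$ be down-seeing in $Q_i$. For up-seeing one inspects $P(i+2)=Q_{i+2}(0)\sqcup Q_{i+1}(1)\sqcup\{g\}$, and the ``staircase'' bullet, the ``same rank'' bullet, and the all-seeing property show every vertex there covers $v$, so $v$ is up-seeing in $P$, matching vacuous up-seeingness in $Q_i$.

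I expect the only real care needed is boundary bookkeeping: when $i$ is $0$, $1$, $k-1$, or $k$ the quarks $Q_{i-1}$, $Q_{i-2}$, $Q_{i+1}$, $Q_{i+2}$ or the glue vertices $g$ may fail to exist, and one must check that in those cases the relevant rank set of $P$ is either empty (so both sides of the biconditional hold vacuously) or consists only of vertices already covered by the bullet-point analysis above. This is routine, but it is where a sloppy argument could slip, so I would make the ``omit nonexistent summands'' convention explicit at the start and let the cases fall out uniformly from it.
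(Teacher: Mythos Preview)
Your proof is correct and follows essentially the same approach as the paper: both verify the biconditional directly from the construction of $P$. The paper's argument is much terser---it handles only the down-seeing direction explicitly, dispatching one implication via a one-line contrapositive and the other with the phrase ``by construction''---whereas you explicitly partition the adjacent rank sets of $P$ and check each piece against the relevant bullet of the definition, which is a welcome level of detail.
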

\begin{proof}
Choose $i \in \{0, \ldots, k\}$ and choose $v \in Q_i$.  If $v$ is not down-seeing in $Q_i$, then $v \in Q_i(1)$ and there is some $w \in Q_i(0)$ such that $v \not > w$.  In this case, $v \in P(i + 1)$ is not larger than $w \in P(i)$, and so $v$ is not down-seeing in $P$.  On the other hand, if $v \in Q_i$ is down-seeing in $Q_i$ then by construction $v$ covers all vertices of one lower rank in $P$.
\end{proof}

\begin{prop}
Suppose that $Q_0$ is a bottom quark, $Q_1$, \ldots, $Q_{k - 1}$ are middle quarks and $Q_k$ is a top quark, and 
\[
P = Q_0 \oplus_{\alpha_1} Q_1 \oplus_{\alpha_2} \cdots \oplus_{\alpha_{k}} Q_k.
\]
We have that $P$ is sum-indecomposable.
\end{prop}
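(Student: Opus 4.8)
The plan is to verify the definition of sum-indecomposability head on. The poset $P$ has rank sets $P(0), \dots, P(k+1)$, and from the defining conditions one checks routinely that every cover relation of $P$ joins consecutive ranks, that every rank set is nonempty, and that every vertex at a rank $i$ with $0 < i < k+1$ lies both above and below some vertex; hence $P$ is weakly graded with the given rank function and is in fact strongly graded of height $k+2$. That $P$ is trimmed is where one must be slightly careful. By Proposition~\ref{prop:seeing preserved}, a vertex of a constituent quark $Q_j$ is up-seeing (respectively down-seeing, respectively all-seeing) in $P$ exactly when it is so in $Q_j$; and since a quark's underlying poset has height at most $2$, a vertex at the lower rank of $Q_j$ is all-seeing in $Q_j$ if and only if it is up-seeing there, while a vertex at the upper rank is all-seeing if and only if it is down-seeing there. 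Translating the definitions of bottom, middle, and top quark through these equivalences, one finds that the quarks contribute at most one all-seeing vertex, and only to the extreme ranks $P(0)$ and $P(k+1)$; the only other all-seeing vertices of $P$ are the unlabeled vertices added by the gluings, each lying in a rank $P(i)$ with $1 \le i \le k$ and at most one per rank. So every rank of $P$ has at most one all-seeing vertex, $P$ is trimmed, and it remains only to show that no rank set $P(i)$ with $i \le k$ consists entirely of up-seeing vertices.

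The heart of the argument is the following observation about a single quark, which I would record first as a lemma: in any quark $Q$, not every vertex of $Q(0)$ is up-seeing. Indeed, suppose every vertex of $Q(0)$ were up-seeing. Since $Q(0)$ and $Q(1)$ are both nonempty, it would follow that every vertex of $Q(1)$ covers every vertex of $Q(0)$, so every vertex of $Q(1)$ is down-seeing; then $Q$ has an up-seeing vertex at rank $0$ together with a down-seeing vertex at rank $1$, contradicting the defining restriction on quarks.

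Given this lemma the conclusion is immediate. Fix $i$ with $0 \le i \le k$. When $i = 0$ we have $P(0) = Q_0(0)$, and when $1 \le i \le k$ the set $Q_i(0)$ is contained in $P(i)$; in either case $Q_i$ is a quark, so by the lemma some $v \in Q_i(0)$ is not up-seeing in $Q_i$, and then by Proposition~\ref{prop:seeing preserved} this same $v$ is not up-seeing in $P$. Thus for every $i$ less than $k+1$ — which is one less than the height $k+2$ of $P$ — the rank set $P(i)$ contains a non-up-seeing vertex, so $P$ is sum-indecomposable. I expect the trimmedness step to be the only genuinely fussy part, because of the vacuous up- and down-seeing behaviour at the extreme ranks of each quark; the lemma and the final deduction are short once Proposition~\ref{prop:seeing preserved} is available.
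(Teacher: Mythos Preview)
Your proof is correct and follows essentially the same route as the paper's: verify that $P$ is strongly graded and trimmed, then use the fact that each $Q_i(0)$ contains a non-up-seeing vertex together with Proposition~\ref{prop:seeing preserved} to conclude sum-indecomposability. Your exposition is more careful in two places where the paper is terse---you actually prove the key lemma that no quark has all of $Q(0)$ up-seeing (the paper simply asserts that $Q_i(0)$ contains a non-up-seeing vertex), and you spell out the trimmedness check---but there is no genuine difference in strategy.
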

\begin{proof}
It follows immediately from the construction that $P$ is weakly graded, vigilant and trimmed.  Observe that the restrictions on the quarks guarantee that every vertex not of maximal rank is covered by something and that every vertex not of minimal rank covers something, so that $P$ is strongly graded as well.  Finally, for all $i \in \{0, \ldots, k\}$, $Q_i(0)$ contains a vertex that is not up-seeing.  Thus, by Proposition~\ref{prop:seeing preserved}, $P(i)$ contains such a vertex, and thus $P$ is sum-indecomposable.
\end{proof}

In fact, every sum-indecomposable poset may be written as a sticking and gluing of quarks in a unique way, as the next result shows.

\begin{prop}\label{prop:strong decomposition for sum-indecomposables}
For $k \geq 1$, suppose that $P$ is a sum-indecomposable poset of height $k + 2$.  There exists a unique bottom quark $Q_0$, top quark $Q_k$, collection $Q_1$, \ldots, $Q_{k - 1}$ of middle quarks, and choice $(\alpha_1, \ldots, \alpha_k) \in \{S, G\}^k$ such that 
\[
P = Q_0 \oplus_{\alpha_1} Q_1 \oplus_{\alpha_2} \cdots \oplus_{\alpha_{k}} Q_k.
\]
\end{prop}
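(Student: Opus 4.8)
The plan is to prove existence and uniqueness by extracting the quarks from the rank sets of $P$ in the only way the construction permits, using Proposition~\ref{prop:seeing preserved} as the key tool that guarantees the ``seeing'' data of each quark is visible inside $P$. Write $k + 2$ for the height of $P$, so $P$ has rank sets $P(0), \ldots, P(k+1)$. For each $i \in \{0, \ldots, k\}$, the quark $Q_i$ must have $Q_i(0) \subseteq P(i)$ and $Q_i(1) \subseteq P(i+1)$; so the first task is to partition each $P(i)$ for $0 \le i \le k+1$ into the pieces coming from $Q_{i-1}(1)$, from $Q_i(0)$, and (if $\alpha_i = G$) a single all-seeing vertex. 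The natural rule: a non-all-seeing vertex $v \in P(i)$ belongs to $Q_i(0)$ if it is up-seeing in $P$ and to $Q_{i-1}(1)$ if it is down-seeing in $P$ (with the convention that every vertex of $P(0)$ lies in some $Q_0(0)$ and every vertex of $P(k+1)$ lies in $Q_k(1)$). I must check this is well-defined, i.e. that no non-all-seeing vertex of an ``interior'' rank set is both up-seeing and down-seeing (that would make it all-seeing, contradiction) and that every such vertex is at least one of the two — this last point is exactly vigilance of $P$. For the all-seeing vertices of $P(i)$ with $1 \le i \le k$, there is at most one since $P$ is trimmed; if present, it becomes the glue vertex and we set $\alpha_i = G$, otherwise $\alpha_i = S$. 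All-seeing vertices at ranks $0$ and $k+1$ (at most one each, again by trimmedness) are assigned to $Q_0(0)$ and $Q_k(1)$ respectively.

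Having defined the vertex sets of the $Q_i$, I would define the cover relations of each $Q_i$ by restriction: $v \covered w$ in $Q_i$ iff $v \covered w$ in $P$, for $v \in Q_i(0)$, $w \in Q_i(1)$. Then I must verify that each $Q_i$ really is a quark of the required type. The bipartition into lower and upper halves is nonempty because $P$ is strongly graded (so $P(i)$ and $P(i+1)$ are nonempty, and each contains a non-all-seeing vertex by sum-indecomposability — indeed sum-indecomposability says $P(i)$ has a non-up-seeing vertex, which then lands in $Q_{i-1}(1)$ or is the reason $Q_i$ has content); the ``at most one part has an all-seeing vertex'' condition and the absence of an up-seeing rank-$0$ vertex together with a down-seeing rank-$1$ vertex in a single $Q_i$ follow from how we assigned all-seeing and seeing vertices. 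The conditions that $Q_0$ is a bottom quark and $Q_k$ a top quark come from the convention placing boundary vertices of $P(0)$ and $P(k+1)$ at the appropriate rank and from trimmedness forbidding an all-seeing vertex of the wrong rank. The $Q_i$ for $0 < i < k$ have no all-seeing vertex of either rank because any all-seeing vertex of $P(i)$ was peeled off as the glue vertex, so they are middle quarks.

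The remaining work is to confirm that, with these $Q_i$ and $\alpha_i$, the poset reconstructed by the definition of $Q_0 \oplus_{\alpha_1} \cdots \oplus_{\alpha_k} Q_k$ is exactly $P$ — in particular that $P$ has no cover relations beyond those forced by the four bullet points of the definition. This is where Proposition~\ref{prop:seeing preserved}, or rather its underlying logic, does the heavy lifting: a cover relation $v \covered w$ in $P$ with $v \in P(i)$, $w \in P(i+1)$ either has $v$ down-seeing or $w$ up-seeing (or both), by vigilance applied to the edge; tracing which quark each endpoint lies in and using that down-seeing/up-seeing vertices cover/are-covered-by entire adjacent rank sets, one checks the relation is of one of the permitted forms (within a quark, between consecutive same-index quarks at the same rank, between $Q_i(1)$ and $Q_{i+2}(0)$, or involving a glue vertex). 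Uniqueness then follows because the construction forces every choice: the assignment of each non-all-seeing vertex to $Q_i(0)$ versus $Q_{i-1}(1)$ is dictated by whether it is up-seeing or down-seeing in $P$ (and by Proposition~\ref{prop:seeing preserved} this must agree with its status in whatever decomposition we started from), the lone all-seeing vertex at each interior rank forces $\alpha_i$, and the cover relations of each $Q_i$ are forced by those of $P$. I expect the main obstacle to be the verification that $P$ has no extra cover relations — one must carefully enumerate, by the rank difference and seeing-type of the endpoints, all possible edges of $P$ and match each against the four-bullet list, handling the glue vertices and the boundary ranks as special cases; the rest is bookkeeping.
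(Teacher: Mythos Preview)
Your overall architecture is exactly that of the paper: partition each interior rank set $P(i)$ into a piece for $Q_{i-1}(1)$, a piece for $Q_i(0)$, and possibly a glue vertex; then restrict the cover relations; then verify the reconstruction equals $P$ and argue uniqueness via Proposition~\ref{prop:seeing preserved}. However, your assignment rule is reversed. You send an up-seeing vertex of $P(i)$ to $Q_i(0)$ and a down-seeing one to $Q_{i-1}(1)$; it must be the other way around. In the glued/stuck poset, every vertex of $Q_i(0)$ is automatically below all of $Q_{i+1}(0)$ and all of $Q_{i-1}(1)$ (and the glue vertex if present), so it is down-seeing in $P$; but it need not be below every element of $Q_i(1)$, so it is generically \emph{not} up-seeing. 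Dually, vertices of $Q_{i-1}(1)$ are up-seeing in $P$ but not down-seeing. Thus the correct rule (and the paper's) is: $Q_i(0)$ consists of the vertices of $P(i)$ that are \emph{not up-seeing}, and $Q_i(1)$ consists of the vertices of $P(i+1)$ that are \emph{not down-seeing}.

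With your reversed rule the construction breaks: each $v\in Q_i(0)$ would be up-seeing in $P$, hence covered by every vertex of $P(i+1)\supseteq Q_i(1)$, so after restriction $v$ would be up-seeing (hence all-seeing) in $Q_i$; similarly every vertex of $Q_i(1)$ would be all-seeing. Your $Q_i$ would therefore have all-seeing vertices in both ranks, violating the quark axioms, and certainly not be a middle quark. The downstream claims (``the absence of an up-seeing rank-$0$ vertex together with a down-seeing rank-$1$ vertex in a single $Q_i$ follow from how we assigned \ldots'') are then false as stated. Once you swap the rule, the rest of your outline goes through and coincides with the paper's proof; in particular your uniqueness argument via Proposition~\ref{prop:seeing preserved} is the same one the paper uses.
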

\begin{proof}
For some $k \geq 1$, choose a sum-indecomposable poset $P$ of height $k + 2$.  We first describe the decomposition of $P$ into quarks, then show that it is unique.

For $i \in \{1, \ldots, k - 1\}$, define $Q_i$ as follows: the lower rank set $Q_i(0)$ consists of all vertices of $P(i)$ that are not up-seeing, the upper rank set $Q_i(1)$ consists of all vertices of $P(i + 1)$ that are not down-seeing, and the vertices are labeled in accordance with the labeling of $P$.  The top and bottom quark $Q_0$ and $Q_k$ are defined similarly, except that $Q_0(0) = P(0)$ and $Q_k(1) = P(k + 1)$ (i.e., we remove the additional restriction in this case).  For $i \in \{1, \ldots, k\}$, we set $\alpha_i = G$ if $P(i)$ contains an all-seeing vertex, and $\alpha_i = S$ otherwise.  

Since $P$ is sum-indecomposable, both rank sets of every $Q_i$ are nonempty, and no $Q_i$ contains an all-seeing vertex except possibly in the bottom rank set of $Q_0$ or the top rank set of $Q_k$.  Since $P$ is strongly graded, every vertex in $P(1)$ covers some vertex in $P(0)$, so $Q_0(1)$ has no isolated vertices (and likewise $Q_k(0)$ has no isolated vertices).  Thus, $Q_0$ is a bottom quark, $Q_k$ a top quark, and $Q_1$, \ldots, $Q_{k - 1}$ are middle quarks.  Since $P$ is trimmed, every vertex of $P$ either belongs to exactly one of the $Q_i$ or is an all-seeing vertex not of top or bottom rank.  Finally, it's easy to check that the cover relations of $P$ and those of $Q_0 \oplus_{\alpha_1} Q_1 \oplus_{\alpha_2} \cdots \oplus_{\alpha_{k}} Q_k$ are the same, as desired.

The uniqueness of this decomposition is straightforward: the presence of all-seeing vertices indicates which of the $\alpha_i$ are $G$, vertices of rank $i$ that are not down-seeing can only come from $Q_{i - 1}(1)$, and vertices of rank $i$ that are not up-seeing can only come from $Q_i(0)$.  Thus, the partition of the underlying set of $P$ into the underlying sets of the $Q_i$ is uniquely determined; the uniqueness of the $Q_i$ as quarks follows immediately.
\end{proof}

Now we can connect our characterization of sum-indecomposable posets as quarks that have been glued or stuck together to our ultimate goal of studying $({\bf 3+1})$-avoiding posets.

\begin{theorem}
\label{thm:3+1 characterization}
A sum-indecomposable poset $P$ is $({\bf 3+1})$-avoiding if and only if the decomposition $P = Q_0 \oplus_{\alpha_1} Q_1 \oplus_{\alpha_2} \cdots \oplus_{\alpha_k} Q_k$ into quarks satisfies the following condition: for every occurrence of $Q_i \stick Q_{i+1}$ in the decomposition, either $Q_i$ has no isolated vertices on its bottom rank or $Q_{i+1}$ has no isolated vertices on its top rank, or both.
\end{theorem}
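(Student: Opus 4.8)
The plan is to reduce the whole statement to Theorem~\ref{thm:3+1 locality v2}(\textit{III}). A sum-indecomposable poset is by definition trimmed, hence vigilant, so that theorem tells us that $P$ avoids $\mathbf{3+1}$ if and only if every pair of vertices of $P$ whose ranks differ by exactly $2$ is comparable. Thus it suffices to determine, for each $i$ and each $v\in P(i)$, $w\in P(i+2)$, exactly when $v$ and $w$ are comparable, and to match the answer against the stated condition. The basic tool is that, $P$ being weakly graded, $v<w$ holds if and only if $U_v\cap D_w\neq\emptyset$, where $U_v=\{z\in P(i+1):z>v\}$ and $D_w=\{z\in P(i+1):z<w\}$; and each membership $z>v$ or $z<w$ is itself a cover relation, so $U_v$ and $D_w$ can be read off directly from the four bullet points defining $Q_0\oplus_{\alpha_1}\cdots\oplus_{\alpha_k}Q_k$ together with the behaviour of any all-seeing vertex at rank $i+1$.

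Next I would carry out the case analysis according to where $v$ and $w$ lie among the quark pieces, using that $P(i)=Q_i(0)\sqcup Q_{i-1}(1)$ (with an extra all-seeing vertex when $\alpha_i=G$) and that all rank sets of all the quarks are nonempty (by sum-indecomposability). Reading off the cover relations, one checks first that if $v$ is all-seeing or $v\in Q_{i-1}(1)$ then $U_v=P(i+1)$, and symmetrically that if $w$ is all-seeing or $w\in Q_{i+2}(0)$ then $D_w=P(i+1)$; and second that if $v\in Q_i(0)$ then $Q_{i+1}(0)\subseteq U_v$, while if $w\in Q_{i+2}(0)$ then $Q_{i+1}(0)\subseteq D_w$ and if $w\in Q_{i+1}(1)$ then $Q_i(1)\subseteq D_w$. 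Since $P(i+1)$, $Q_{i+1}(0)$ and $Q_i(1)$ are all nonempty, these observations force $v$ and $w$ to be comparable in every configuration except the one in which $v\in Q_i(0)$, $w\in Q_{i+1}(1)$, and neither vertex is all-seeing (which forces $0\le i\le k-1$). In that remaining case I would compute $U_v=N^+_{Q_i}(v)\cup Q_{i+1}(0)\cup E$ and $D_w=N^-_{Q_{i+1}}(w)\cup Q_i(1)\cup E$, where $N^+_{Q_i}(v)\subseteq Q_i(1)$ is the set of up-neighbours of $v$ in $Q_i$, $N^-_{Q_{i+1}}(w)\subseteq Q_{i+1}(0)$ is the set of down-neighbours of $w$ in $Q_{i+1}$, and $E$ equals $\{\text{the all-seeing vertex of rank }i+1\}$ if $\alpha_{i+1}=G$ and $E=\emptyset$ otherwise.

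The argument then finishes cleanly: if $\alpha_{i+1}=G$ then $E\subseteq U_v\cap D_w$, so $v<w$; if $\alpha_{i+1}=S$ then $E=\emptyset$ and, the underlying rank sets being pairwise disjoint, $U_v\cap D_w=N^+_{Q_i}(v)\cup N^-_{Q_{i+1}}(w)$, which is empty precisely when $v$ has no up-neighbour in $Q_i$ and $w$ has no down-neighbour in $Q_{i+1}$ --- that is, precisely when $v$ is an isolated vertex of $Q_i$ lying on its bottom rank and $w$ is an isolated vertex of $Q_{i+1}$ lying on its top rank. Hence $P$ has an incomparable pair of vertices at rank-distance $2$ if and only if its decomposition contains an occurrence $Q_i\stick Q_{i+1}$ in which $Q_i$ has an isolated vertex on its bottom rank and $Q_{i+1}$ has an isolated vertex on its top rank; negating this and applying Theorem~\ref{thm:3+1 locality v2}(\textit{III}) gives the statement.

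The part that needs genuine care is the description of $U_v$ and $D_w$ in the paragraph above: establishing it means working through all four bullets of the definition of the $\oplus$-operation as well as the all-seeing vertices, and one must separately handle the boundary ranks $0$, $1$, $k$, $k+1$ --- for instance, checking that the rank-$0$ all-seeing vertex of a bottom quark (when present) still sees two ranks upward, and that the argument is unaffected when $w$ lies in the top rank set $P(k+1)=Q_k(1)$. These verifications are routine but are where the real content of the proof sits.
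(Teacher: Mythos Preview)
Your proof is correct and follows essentially the same route as the paper's: both reduce to condition~\textit{III} of Theorem~\ref{thm:3+1 locality v2} and then perform a case analysis on the location of the two vertices within the quark decomposition, arriving at the same unique bad configuration ($v\in Q_i(0)$ isolated, $w\in Q_{i+1}(1)$ isolated, $\alpha_{i+1}=S$). Your formulation via the sets $U_v$, $D_w$ and their intersection is a slightly more systematic bookkeeping device than the paper's direct search for an intermediate vertex, but the underlying argument is the same.
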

\begin{proof}
The poset $P$ avoids $\bf 3+1$ if and only if the conditions of Theorem~\ref{thm:3+1 locality v2}(\textit{III}) hold.  The first condition holds for every sum-indecomposable poset by definition, so the desired statement reduces to the claim that $P$ contains two incomparable vertices whose ranks differ by $2$ if and only if there is some $i \geq 0$ such that $\alpha_{i + 1} = S$, $Q_i$ has an isolated vertex on its lower rank set and $Q_{i + 1}$ has an isolated vertex on its upper rank set.  

The vertices $u$ and $w$ are comparable as elements of $P$ if and only if there is some $v \in P(i + 1)$ such that $u < v < w$.  If $u \not\in Q_{i}(0)$ then we can take $v$ to be any vertex in $Q_i(1)$, while if $w \not\in Q_{i + 1}(1)$ then we can take $v$ to be any vertex in $Q_{i + 1}(0)$, so in these cases $u$ and $w$ are always comparable.  Now we consider the case that $u \in Q_i(0)$ and $w \in Q_{i + 1}(1)$.   If $\alpha_{i + 1} = G$ then we can take $v$ to be the all-seeing vertex at rank $i + 1$, so in this case $u$ and $w$ are comparable.  If $\alpha_{i + 1} = S$ and $u$ and $w$ are not both isolated in their respective quarks, we may assume without loss of generality that there is some $v \in Q_i(1)$ such that $u < v$.  By the sticking construction, $v < w$, and so $u$ and $w$ are comparable in this case as well.  Finally, suppose that $\alpha_{i + 1} = S$ and that $u$ and $w$ are both isolated in their respective quarks.  We wish to show that $u$ and $w$ are incomparable.  Since $\alpha_{i + 1} = S$, we have $P(i + 1) = Q_i(1) \cup Q_{i + 1}(0)$.  Thus, for any $v \in P(i + 1)$ we have that $v$ is incomparable with $u$ or with $w$.  

It follows that $P$ contains two isolated vertices whose rank differs by $2$, and so a copy of ${\bf 3+1}$, if and only if there is some $i$ such that $\alpha_{i + 1} = S$, $Q_i$ has an isolated vertex of rank $0$, and $Q_{i + 1}$ has an isolated vertex of rank $1$, as desired.
\end{proof}

With this result in hand, we now turn to the task of counting quarks.

\section{Quarks}\label{sec:gf for quarks}

Theorem~\ref{thm:3+1 characterization} implies that studying sum-indecomposable $({\bf 3+1})$-avoiding posets reduces to studying quarks.  In this section, we set out to enumerate quarks.  Following the observation at the beginning of the previous section, this amounts to enumerating bipartite graphs with certain restrictions: a quark $Q$ with $m$ vertices in $Q(0)$ and $n$ vertices in $Q(1)$ is, up to differences in the labeling scheme, just a particular kind of bipartite graph on the disjoint union $[m] \uplus [n]$.  We enumerate such graphs, keeping track of some simple structural information about them.

We define a family of sets $A_\mu^\nu(m,n)$, where $\mu$ and $\nu$ are subsets (possibly empty) of $\{\boxo, \oo, \boxx, \ox \}$, as follows:
\begin{itemize}
\item $A_\mu^\nu(m,n)$ is the set of bipartite graphs on $[m] \uplus [n]$ with some restrictions. The elements of $\nu$ correspond to restrictions on the vertices in $[n]$ and the elements of $\mu$ correspond to restrictions on the vertices of $[m]$.  (Here the placement of indices is meant to suggest that vertices in $[m]$ form a bottom rank and the vertices in $[n]$ a top rank.)  An empty set of symbols corresponds to no restrictions on the corresponding set.
\item A $\boxo$ corresponds to the requirement that there be at least one all-seeing vertex; a $\boxx$ corresponds to the requirement that there be no all-seeing vertex.  
\item A $\oo$ corresponds to the requirement that there be an isolated vertex; a $\ox$ corresponds to the requirement that there be no isolated vertex.
\end{itemize}

For example, $A(m, n)$ is the set of all bipartite graphs on $[m] \uplus [n]$ and $A^\boxo_\boxx (m,n)$ is the subset of $A(m, n)$ containing those graphs with at least one all-seeing vertex in $[n]$ but no all-seeing vertices in $[m]$. 

Of these sets, we are particularly interested in those that contain quarks.  The top quarks correspond to the graphs in $A_{\boxx\ox}$, the bottom quarks correspond to the graphs in $A^{\boxx\ox}$, and the middle quarks correspond to the graphs in $A_\boxx^\boxx$.  In the next section, we will need to consider a more refined count of middle quarks; thus, for $\nu, \mu \subset \{\oo, \ox\}$ we define $B_\mu^\nu(m,n) = A_{\{\boxx\} \cup \mu}^{\{\boxx\} \cup \nu}(m,n)$.  For example, $B_\oo^\ox(m, n)$ is the set of bipartite graphs on $[m] \uplus [n]$ with no all-seeing vertices, no isolated vertices in $[n]$, and at least one isolated vertex in $[m]$. For each $\mu$ and $\nu$, let 
\begin{equation}\label{eqn:def of F}
F^\nu_\mu(x) = \sum_{m, n \geq 1} |B^\nu_\mu (m,n)| \frac{x^{m+n}}{m! n!}
\end{equation}
be the corresponding generating function, so for example the coefficient of $\frac{x^N}{N!}$ in $F^\ox_\oo(x)$ is the number of middle quarks on $N$ vertices with at least one isolated vertex of rank $0$ but none of rank $1$.  Finally, let $B^\nu_\mu$ be the union over $m$ and $n$ of all $B^\nu_\mu(m,n)$. Note that the set of middle quarks is a disjoint union 
\[
B = B^\oo_\oo \cup B^\oo_\ox \cup B^\ox_\oo \cup B^\ox_\ox,
\]
which manifests as a sum of formal power series
\[
F = F^\oo_\oo + F^\oo_\ox + F^\ox_\oo + F^\ox_\ox.
\]

\begin{prop}
\label{prop:quark gfs}
Let 
\[
\Psi(x) = \sum_{m,n \geq 0} \frac{2^{mn}x^{m+n}}{m!n!}
\]
and let $F^\nu_\mu$ be defined as in Equation~\eqref{eqn:def of F}.  We have
\begin{align*}
F^\oo_\oo(x) & = (1 - e^{-x})^2 \Psi(x), \\
F^\oo_\ox(x) = F^\ox_\oo(x) & = (1 - e^{-x})((2e^{-x}-1) \Psi(x) - 1), \\
\intertext{and}
F^\ox_\ox(x) & = (2e^{-x} - 1)((2e^{-x} - 1) \Psi(x) - 1).
\end{align*}
\end{prop}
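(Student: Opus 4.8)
The plan is to identify a quark having $m$ vertices of rank $0$ and $n$ of rank $1$ with an $m\times n$ zero--one matrix --- i.e. a bipartite graph on $[m]\uplus[n]$ --- under which all-seeing vertices of $[m]$ become all-ones rows, isolated vertices of $[m]$ become all-zeros rows, and symmetrically for $[n]$ and the columns. Since $|A(m,n)|=2^{mn}$, the series $\Psi(x)$ counts \emph{all} such matrices, and each $F^\nu_\mu$ will be carved out of $\Psi$ by two devices. The \emph{one-sided restriction}: requiring every column to avoid the empty column and/or the full column leaves $2^m-k$ admissible columns ($k\in\{0,1,2\}$ the number of forbidden types), chosen independently for each of the $n$ columns, so the generating function is $\sum_m\frac{x^m}{m!}e^{(2^m-k)x}=e^{-kx}\Psi(x)$ (using $\Psi(x)=\sum_m\frac{x^m}{m!}e^{2^m x}$), with the same statement for rows. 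The \emph{two-sided restriction}: to add a row condition on top of a column condition one inclusion--excludes over the set of rows forced to be all-ones (or all-zeros), collapsing the resulting sums with the inverse binomial transform $\sum_{m\ge j}\frac{x^m}{m!}\binom{m}{j}(-1)^{m-j}=\frac{x^j}{j!}e^{-x}$. Two instances are used below: applied to the ``no all-zeros column'' count, this shows matrices with no isolated vertex on either side are counted by $e^{-2x}\Psi(x)$; applied to the ``no all-ones column'' count, it shows matrices with no all-ones row and no all-ones column are counted by $e^{-2x}\Psi(x)$, so that, discarding the degenerate $m=n=0$ matrix, $F^\oo_\oo+F^\oo_\ox+F^\ox_\oo+F^\ox_\ox=e^{-2x}\Psi(x)-1$.

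I would first obtain $F^\oo_\oo$ by a decomposition. A matrix with an isolated vertex on each side automatically has no all-seeing vertex --- an all-zeros column makes every row non-all-ones, and an all-zeros row makes every column non-all-ones --- so $F^\oo_\oo$ simply counts matrices with at least one isolated vertex in $[m]$ and at least one in $[n]$. Such a matrix is exactly the data of a nonempty set of isolated rows, a nonempty set of isolated columns, and a ``core'' matrix on the remaining labels with no isolated vertex on either side. Since the core is counted by $e^{-2x}\Psi(x)$, this yields $F^\oo_\oo(x)=(e^x-1)^2e^{-2x}\Psi(x)=(1-e^{-x})^2\Psi(x)$.

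A companion decomposition handles $F^\ox_\oo$ (and hence $F^\oo_\ox$, by transposing matrices). A middle quark with at least one isolated vertex in $[m]$ is a nonempty set of isolated rows together with a core on the remaining labels that has at least one column, no all-zeros row, and no all-ones row; this core is counted by $e^{-2x}\Psi(x)-e^{-x}$ (the one-sided restriction on the rows, the $-e^{-x}$ correcting the zero-column case), so $F^\oo_\oo(x)+F^\ox_\oo(x)=(e^x-1)\bigl(e^{-2x}\Psi(x)-e^{-x}\bigr)$. Subtracting the formula for $F^\oo_\oo$ and simplifying (using $(e^x-1)e^{-x}=1-e^{-x}$) gives $F^\ox_\oo(x)=F^\oo_\ox(x)=(1-e^{-x})\bigl((2e^{-x}-1)\Psi(x)-1\bigr)$; finally $F^\ox_\ox(x)=\bigl(e^{-2x}\Psi(x)-1\bigr)-F^\oo_\oo(x)-2F^\ox_\oo(x)$ collapses to $(2e^{-x}-1)\bigl((2e^{-x}-1)\Psi(x)-1\bigr)$.

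The step I expect to be the main obstacle is the \emph{two-sided restriction}: setting up the alternating sum over the forced all-ones (or all-zeros) rows correctly, pushing it through the inverse binomial transform, and --- most delicately --- keeping track of the degenerate matrices with no rows or no columns, since these are precisely what produce the lower-order corrections ($-1$, $-e^{-x}$) that appear as the ``$-1$'' terms in the final formulas. Everything after that is two short bijective decompositions and elementary algebra in $e^{-x}$.
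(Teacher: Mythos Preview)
Your proposal is correct and follows essentially the same route as the paper's proof (given in Appendix~\ref{section:yan calculus}): identify quarks with bipartite graphs on $[m]\uplus[n]$, use $\Psi$ as the master generating function, and carve out each $F^\nu_\mu$ via inclusion--exclusion and subtraction. The only cosmetic difference is that you package the inclusion--exclusion as a bijective ``strip off the isolated rows/columns, leave a core'' decomposition and work univariately throughout, whereas the paper keeps the variables $x,y$ separate and computes the auxiliary sets $A_{\boxx}$, $A_{\boxx\ox}$, $B$, $B_\oo$, $B^\oo_\oo$ by direct counting before taking differences; both lead to the same intermediate identities (in particular $\sum_{m,n\ge0}|B(m,n)|\frac{x^{m+n}}{m!n!}=e^{-2x}\Psi(x)$ and the formula for $B_\oo$) and the same algebra at the end.
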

\begin{proof}
See Appendix~\ref{section:yan calculus}.
\end{proof}

\section{Strongly Graded Posets}\label{sec:climax}

In this section, we use the $F^\nu_\mu$ as building blocks to obtain the generating function for sum-indecomposable $({\bf 3+1})$-avoiding posets, and then proceed to enumerate all strongly graded $({\bf 3+1})$-avoiding posets.  (Recall that by our definition, sum-indecomposable posets are necessarily strongly graded.)  We begin by encoding a sum-indecomposable poset in terms of a \emph{word} that keeps track of its quarks and how they are combined (i.e., gluing and sticking).  Then we use the transfer-matrix method to enumerate words while keeping track of the restrictions imposed by Theorem~\ref{thm:3+1 characterization}. 

Given any quark $Q$, we define its \textbf{type} as follows: if $Q$ is a middle quark (i.e., an element of $B$), then the type of $Q$ is the symbol $B^\nu_\mu$ corresponding to the unique subset among the four $B^\nu_\mu$ to which it belongs.  (This is a slight abuse of notation that will never cause ambiguity in context.)  If $Q$ is a top or bottom quark, first remove any all-seeing vertices from $Q$, leaving a middle quark $Q'$; then set the type of $Q$ to be the type of $Q'$.  Define a \textbf{word} to be any monomial in the noncommutative algebra $\RR\llangle S,G,B^\oo_\oo, B^\oo_\ox, B^\ox_\oo, B^\ox_\ox \rrangle$.  We now encode the properties of sum-indecomposability and $({\bf 3+1})$-avoidance into conditions on words.

\begin{defn}
\label{defn:legal}
We say that a word $L$ is \textbf{legal} if for some $k \geq 0$ there are $\alpha_i \in \{S, G\}$ and $B_i \in \{B^\oo_\oo, B^\oo_\ox, B^\ox_\oo, B^\ox_\ox\}$ such that $L = \alpha_0 B_0 \alpha_1 B_1 \alpha_2 \cdots B_{k-1}\alpha_{k}B_k\alpha_{k+1}$, and none of the following occur:
\begin{enumerate}
\item $\alpha_0 = S$ and $B_0$ has a $\oo$ in the superscript;
\item $\alpha_{k+1} = S$ and $B_k$ has a $\oo$ in the subscript;
\item there is some $i$, $1 \leq i \leq k$, such that $B_{i-1}$ has a $\oo$ in the subscript, $\alpha_i = S$, and $B_{i}$ has a $\oo$ in the superscript.
\end{enumerate}
\end{defn}

We define a \textbf{weight function} $\w : \RR\llangle S,G,B^\oo_\oo, B^\oo_\ox, B^\ox_\oo, B^\ox_\ox \rrangle \to \RR[\![x, z]\!]$ as follows: we set $\w(S) = 1$, $\w(G) = z$, and $\w(B^\nu_\mu) = F^\nu_\mu$ and we extend by linearity and multiplication.

Let $I_{\geq 2}(x, z)$ be the generating function for sum-indecomposable $({\bf 3+1})$-avoiding posets of height at least $2$, where the variable $z$ counts all-seeing vertices, the variable $x$ counts other vertices, and $I_{\geq 2}(x, z)$ is exponential in $x$ and ordinary in $z$.

\begin{theorem}
\label{thm:sum over words most heights}
 The generating function for sum-indecomposable $({\bf 3+1})$-avoiding posets of height at least $2$ is
\[
I_{\geq 2}(x,z) = \sum_L \w(L),
\]
where the sum is over all legal words $L$.
\end{theorem}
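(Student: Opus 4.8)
The plan is to set up a bijection between legal words and sum-indecomposable $({\bf 3+1})$-avoiding posets of height at least $2$, refined so that it respects the weight $\w$, and then observe that this bijection turns $\sum_L \w(L)$ into the claimed generating function $I_{\geq 2}(x,z)$. The two height-$2$ cases (height exactly $2$, where the poset \emph{is} a single quark) and height $\geq 3$ need to be handled together; the legal-word description with its ``cap'' letters $\alpha_0$ and $\alpha_{k+1}$ is precisely engineered to make height $2$ fall out as the $k=0$ case, so I would first spell out what a legal word of length-parameter $k$ corresponds to, paying particular attention to the boundary.

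First I would recall, via Proposition~\ref{prop:strong decomposition for sum-indecomposables}, that a sum-indecomposable poset $P$ of height $k+2$ has a unique decomposition $P = Q_0 \oplus_{\alpha_1} Q_1 \oplus \cdots \oplus_{\alpha_k} Q_k$ with $Q_0$ a bottom quark, $Q_k$ a top quark, and $Q_1,\ldots,Q_{k-1}$ middle quarks. To a bottom quark $Q_0$ I associate its \emph{type} $B_0$ (the type of the middle quark obtained by deleting all-seeing vertices) together with a symbol $\alpha_0 \in \{S,G\}$ recording whether $Q_0$ has an all-seeing vertex on its bottom rank ($\alpha_0 = G$) or not ($\alpha_0 = S$); likewise $Q_k$ yields a type $B_k$ and a symbol $\alpha_{k+1}$ for its top rank. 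Thus $P$ is encoded by the word $L = \alpha_0 B_0 \alpha_1 B_1 \cdots \alpha_k B_k \alpha_{k+1}$. Conversely, given such a word with the middle pieces being genuine middle-quark types, I recover $Q_0$ from $(\alpha_0, B_0)$ — prepending an all-seeing bottom vertex iff $\alpha_0 = G$ — and similarly the other quarks, and reassemble $P$. The key point to check is that the constraints ``legal'' are exactly the constraints ``$({\bf 3+1})$-avoiding.'' Here I would invoke Theorem~\ref{thm:3+1 characterization}: the only way a sum-indecomposable $P$ can fail $({\bf 3+1})$-avoidance is to have, for some $i$, $\alpha_{i+1} = S$ with $Q_i$ having an isolated vertex on its bottom rank and $Q_{i+1}$ an isolated vertex on its top rank. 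For $1 \le i \le k-1$ this is forbidden by condition (3) in Definition~\ref{defn:legal}; the boundary instances $i = 0$ and $i = k$ require recognizing that adding an all-seeing vertex to the bottom (resp.\ top) of a quark destroys any bottom (resp.\ top) isolated vertices, so that ``$\alpha_0 = G$'' makes $Q_0$ automatically free of bottom isolated vertices and likewise for $\alpha_{k+1} = G$ — hence exactly conditions (1) and (2) capture the boundary cases. I would also need the converse direction of the bijection to land inside sum-indecomposable posets, which is the content of the proposition preceding Theorem~\ref{thm:3+1 characterization}.

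Next I would check that the bijection is weight-preserving in the bookkeeping variables $x$ (exponential, counting non-all-seeing vertices) and $z$ (ordinary, counting all-seeing vertices). A middle quark of type $B^\nu_\mu$ with $m$ bottom and $n$ top vertices contributes $x^{m+n}/(m!n!)$, summed to give $F^\nu_\mu = \w(B^\nu_\mu)$ by \eqref{eqn:def of F}; a letter $G$ at rank $i$ contributes one all-seeing vertex, i.e.\ a factor $z = \w(G)$, and $\w(S) = 1$. Since the rank sets of $P$ are the disjoint unions described in the definition of $\oplus_{\alpha}$, and since distinct quarks occupy disjoint vertex sets with independent labels, the exponential-in-$x$ product structure is exactly reproduced by multiplying the $\w$-values of the letters of $L$ — this is a standard labeled-structure product argument, analogous to the one used in Proposition~\ref{prop:trimming gf}. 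One subtlety: the all-seeing vertices of the bottom quark $Q_0$ and top quark $Q_k$ are recorded by $\alpha_0$ and $\alpha_{k+1}$ rather than being part of a $B$-type, so I must confirm the $z$-count matches; it does, because each of ranks $0,1,\ldots,k+1$ has at most one all-seeing vertex and $\alpha_0,\ldots,\alpha_{k+1}$ range over exactly these $k+2$ ranks. Summing $\w(L)$ over all legal words and using the bijection then gives $\sum_L \w(L) = I_{\ge 2}(x,z)$.

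The main obstacle, I expect, is getting the boundary bookkeeping exactly right: verifying that the single-quark case ($k = 0$, legal words $\alpha_0 B_0 \alpha_1$) genuinely enumerates all sum-indecomposable $({\bf 3+1})$-avoiding posets of height $2$ — including checking that every height-$2$ sum-indecomposable trimmed poset arises from a unique middle-quark type together with independent choices of whether to add an all-seeing vertex at rank $0$ and/or rank $1$, and that the $({\bf 3+1})$ condition at height $2$ is vacuous in a way consistent with conditions (1)--(2) — and dovetailing this with the $k \ge 1$ case so that a single sum over legal words covers all heights $\ge 2$ uniformly. The height-$\ge 3$ core is essentially immediate from Theorem~\ref{thm:3+1 characterization} and the uniqueness in Proposition~\ref{prop:strong decomposition for sum-indecomposables}; it is the seam between the cases, and the precise handling of all-seeing vertices on the extremal ranks, that needs care.
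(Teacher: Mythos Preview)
Your approach is the paper's: encode $P$ by the word $\alpha_0 B_0 \alpha_1 \cdots B_k \alpha_{k+1}$ via its quark decomposition, show that legality of the word matches the structural constraints on $P$, and verify that the fiber over a legal word $L$ has generating function $\w(L)$ by the usual labeled-product argument. The paper likewise treats height~$2$ first and then heights $\geq 3$ via Proposition~\ref{prop:strong decomposition for sum-indecomposables} and Theorem~\ref{thm:3+1 characterization}.

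One correction to your bookkeeping: conditions (1) and (2) of Definition~\ref{defn:legal} are \emph{not} the boundary instances of the $({\bf 3+1})$ criterion. Condition~(3), as $i$ ranges over $1,\ldots,k$, already handles all $k$ stick/glue operations $Q_{i-1}\oplus_{\alpha_i} Q_i$, including those involving $Q_0$ and $Q_k$; note that a bottom-rank isolated vertex of $Q_0'$ remains isolated in $Q_0$ regardless of $\alpha_0$, so the subscript of $B_0$ still records what Theorem~\ref{thm:3+1 characterization} needs. Conditions (1) and (2) instead ensure that the reconstructed $Q_0$ and $Q_k$ are valid bottom and top quarks, respectively: condition~(1) forbids an isolated vertex in the \emph{top} rank of $Q_0'$ (the superscript) when $\alpha_0 = S$, which is exactly the bottom-quark requirement that all isolated vertices lie in rank~$0$. (Adding an all-seeing vertex at rank~$0$ destroys \emph{top}-rank isolates, not bottom-rank ones as you wrote.) With this fix your argument goes through and coincides with the paper's.
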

\begin{proof}
First we handle the height-$2$ case.  By considering the presence or absence of an all-seeing vertex of rank $0$ or $1$, it's easy to see that the generating function for such posets is precisely
\[
z^2 \cdot F + z\cdot F^\ox + z \cdot F_\ox + F^\ox_\ox
\]
and that this is equal to the sum of $\w(L)$ over all legal words $L$ of length $3$ (i.e., those of the form $\alpha_0 B_0 \alpha_1$ satisfying certain restrictions).  Now we handle the case of larger heights.

Let $P$ be a sum-indecomposable $({\bf 3+1})$-avoiding poset of height $k + 2$ for some $k \geq 1$.  Suppose $P$ decomposes into quarks as $P = Q_0 \oplus_{\alpha_1} \cdots \oplus_{\alpha_{k}} Q_k$.  Set $W(P)$ to be the word $\alpha_0 B_0 \alpha_1 B_1 \alpha_2 \cdots B_{k-1} \alpha_{k} B_k \alpha_{k+1}$ defined as follows: 
\begin{itemize}
\item for $0 \leq i \leq k + 1$, $\alpha_i = G$ if $P$ has an all-seeing vertex of rank $i$ and $\alpha_i = S$ otherwise;
\item for $0 \leq i \leq k$, $B_i$ is the type of $Q_i$.
\end{itemize}
It is easy to check that the map $W$ is well-defined and that the constraints imposed on the $Q_i$ and $\alpha_i$ by Theorem~\ref{thm:3+1 characterization} correspond precisely to the condition that $W(P)$ is a legal word.  Given a legal word $L$, we now show that the generating function for posets $P$ such that $W(P) = L$ is precisely $\w(L)$; our result follows immediately from summing over all legal words $L$.

Fix a word $L = \alpha_0 B_0 \alpha_1 \cdots B_{k-1} \alpha_{k} B_k \alpha_{k+1}$, and consider its preimage $W^{-1}(L) = \{P \mid W(P) = L\}$.  Any $P \in W^{-1}(L)$ can be written in the form $P = Q_0 \oplus_{\alpha_1} \cdots \oplus_{\alpha_{k}} Q_k$ with the types of the $Q_i$ determined by the $B_i$. However, after we fix the type $B_i$, any quark of that type can be used as part of a sum-indecomposable $({\bf 3+1})$-avoiding poset. Thus, the posets in the preimage of $L$ contribute exactly $F^\nu_\mu$ for each occurrence of $B_i = B^\nu_\mu$. Furthermore, each occurrence of $\alpha_i = G$ corresponds to a single all-seeing vertex, and so contributes $z$.  Thus, by standard rules for generating functions, the generating function for posets in $W^{-1}(L)$ is exactly $\w(L)$.  It follows that $I_{\geq 2}(x, z)$ is the result of summing $\w(L)$ over the legal words $L$, as desired.
\end{proof}

Let $I(x, z)$ be the generating function for nonempty sum-indecomposable $({\bf 3+1})$-avoiding posets, where the variable $z$ counts all-seeing vertices, the variable $x$ counts other vertices, and $I(x, z)$ is exponential in $x$ and ordinary in $z$.

\begin{cor}
\label{cor:sum over words all heights}
The generating function for all nonempty sum-indecomposable $({\bf 3+1})$-avoiding posets is 
\[
I(x,z) = z + \sum_L \w(L),
\] 
where the sum is over all legal words $L$.  
\end{cor}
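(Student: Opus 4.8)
The plan is to reduce Corollary~\ref{cor:sum over words all heights} to Theorem~\ref{thm:sum over words most heights} by handling exactly the one case that theorem omits: sum-indecomposable $({\bf 3+1})$-avoiding posets of height less than $2$. First I would recall that a nonempty sum-indecomposable poset has some height $h \geq 1$, and partition the class of all such posets accordingly. For $h \geq 2$, Theorem~\ref{thm:sum over words most heights} already tells us that the contribution to the generating function is exactly $\sum_L \w(L)$, summed over legal words. So the only thing left is to compute the contribution of the height-$1$ sum-indecomposable $({\bf 3+1})$-avoiding posets, and to check that it equals $z$.

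The key step, then, is the height-$1$ analysis. A strongly graded poset of height $1$ has a single rank set $P(0)$, consisting entirely of maximal and minimal elements, so it is an antichain; moreover it is trimmed, which forces at most one all-seeing vertex, and a vertex in a one-rank poset is vacuously both up- and down-seeing, hence all-seeing. So a trimmed height-$1$ poset is an antichain whose vertices are all all-seeing, and trimmedness collapses all of them to a single unlabeled vertex: there is exactly one such poset, the single-point poset, and it trivially avoids ${\bf 3+1}$. The sum-indecomposability condition (no rank $i < k-1$ with all vertices up-seeing, where $k=1$ here means no condition at all) is automatically satisfied. Since its only vertex is all-seeing, it is counted by the variable $z$, contributing the monomial $z$ to $I(x,z)$. (One should double-check the degenerate empty-poset case is excluded by the word ``nonempty'' in the statement, and that the height-$0$ case does not arise since a nonempty poset has height at least $1$.)

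Assembling the pieces: $I(x,z)$ is the sum of the height-$1$ contribution and the height-$\geq 2$ contribution, i.e. $I(x,z) = z + I_{\geq 2}(x,z) = z + \sum_L \w(L)$ by Theorem~\ref{thm:sum over words most heights}. I would also want to remark briefly that the height-$1$ poset is genuinely not of the form captured by a legal word — legal words $\alpha_0 B_0 \alpha_1 \cdots$ always encode a poset of height at least $2$ since they contain at least one quark $B_i$, whose two nonempty ranks force height $\geq 2$ — which is precisely why the extra $z$ term is needed and is not already subsumed in $\sum_L \w(L)$.

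I do not anticipate a serious obstacle here; the corollary is essentially bookkeeping on top of the theorem. The one place to be careful is the claim that every trimmed height-$1$ poset reduces to the single point: this uses that in a height-$1$ poset every vertex is all-seeing (so has no ``labeled'' incarnation under trimming) together with the trimmed-poset convention that each rank has at most one all-seeing vertex and that all-seeing vertices are unlabeled. It is worth stating this explicitly rather than leaving it implicit, since it is the only substantive content of the proof.
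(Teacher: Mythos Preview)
Your proposal is correct and matches the paper's approach: the paper states this corollary without proof, treating it as immediate from Theorem~\ref{thm:sum over words most heights} together with the observation that the only sum-indecomposable poset of height $1$ is the single unlabeled all-seeing vertex, which contributes the term $z$. Your height-$1$ analysis spells out exactly the details the paper leaves implicit.
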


The preceding results establish that to enumerate posets we may focus our energies on enumerating words.  We accomplish this task with the transfer-matrix method.  Let $M_W$ be the matrix
\[
M_W = G \cdot \begin{bmatrix}
B^\oo_\oo & B^\oo_\ox & B^\ox_\oo & B^\ox_\ox \\
B^\oo_\oo & B^\oo_\ox & B^\ox_\oo & B^\ox_\ox \\
B^\oo_\oo & B^\oo_\ox & B^\ox_\oo & B^\ox_\ox \\
B^\oo_\oo & B^\oo_\ox & B^\ox_\oo & B^\ox_\ox
\end{bmatrix} + S \cdot \begin{bmatrix}
0        & 0       & B^\ox_\oo & B^\ox_\ox \\
B^\oo_\oo & B^\oo_\ox & B^\ox_\oo & B^\ox_\ox \\
0        & 0       & B^\ox_\oo & B^\ox_\ox \\
B^\oo_\oo & B^\oo_\ox & B^\ox_\oo & B^\ox_\ox
\end{bmatrix}
\]
with entries in the noncommutative algebra $\RR\llangle S,G,B^\oo_\oo, B^\oo_\ox, B^\ox_\oo, B^\ox_\ox \rrangle$ of words. 

\begin{prop}
\label{prop:gf words}
With $M_W$ as above, the sum of the legal words of length $2k+3$ is
\[
\begin{bmatrix}G \cdot B^\oo_\oo &G \cdot B^\oo_\ox&(S + G) B^\ox_\oo&(S+G)B^\ox_\ox\end{bmatrix} \cdot (M_W)^{k} \cdot \begin{bmatrix}G \\ S + G \\ G \\ S+G \end{bmatrix}.
\] 
\end{prop}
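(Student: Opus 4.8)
The plan is to set up a bijective/transfer-matrix correspondence between legal words of length $2k+3$ and lattice-path-like walks in a four-state automaton, and then verify that $M_W$ together with the boundary vectors correctly implements the legality conditions of Definition~\ref{defn:legal}. First I would recall that a legal word has the shape $L = \alpha_0 B_0 \alpha_1 B_1 \cdots B_k \alpha_{k+1}$ with $\alpha_i \in \{S,G\}$ and $B_i \in \{B^\oo_\oo, B^\oo_\ox, B^\ox_\oo, B^\ox_\ox\}$, so a word of length $2k+3$ has exactly $k+1$ $B$-symbols ($B_0$ through $B_k$) and $k+2$ $\alpha$-symbols. The four ``states'' of the transfer matrix will be indexed by the four possible types $B^\nu_\mu$ of the $B_i$, and a transition from state $B_{i-1}$ to state $B_i$ carries the intervening $\alpha_i$ together with the new letter $B_i$; the legality constraint (3) forbids precisely the transition where $B_{i-1}$ has a $\oo$ subscript, $\alpha_i = S$, and $B_i$ has a $\oo$ superscript. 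I would note that the subscript of $B_i$ records whether there is an isolated vertex on the bottom rank of the $i$th middle quark, and the superscript whether there is one on the top rank; the constraint says two such ``dangling'' isolated vertices on adjacent ranks cannot be joined by an $S$.

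The key computation is to read off $M_W$ and the boundary vectors. I would observe that $M_W = G\cdot J + S\cdot K$, where $J$ is the all-ones-pattern matrix (every row is $(B^\oo_\oo, B^\oo_\ox, B^\ox_\oo, B^\ox_\ox)$, since an $\alpha_i = G$ never violates (3)) and $K$ is the same matrix but with the two entries in the columns $B^\oo_\oo, B^\oo_\ox$ zeroed out in the rows indexed by $B^\oo_\oo$ and $B^\ox_\oo$ — exactly the rows whose index has a $\oo$ subscript. Crucially, ordering the states so that rows/columns $1,2,3,4$ correspond to $B^\oo_\oo, B^\oo_\ox, B^\ox_\oo, B^\ox_\ox$, the ``bad'' transitions are those from a $\oo$-subscript state (rows $1,3$) via $S$ into a $\oo$-superscript state (columns $1,2$), which is precisely the pattern of zeros displayed in the $S$-matrix summand of $M_W$. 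Then the left boundary vector $\begin{bmatrix} G\cdot B^\oo_\oo & G\cdot B^\oo_\ox & (S+G)B^\ox_\oo & (S+G)B^\ox_\ox\end{bmatrix}$ encodes the choice of $\alpha_0$ and $B_0$: when $B_0$ has a $\oo$ superscript (columns $1,2$) legality condition (1) forces $\alpha_0 = G$, giving weight $G$; otherwise $\alpha_0 \in \{S,G\}$ contributes $S+G$. Symmetrically, the right boundary column vector $\begin{bmatrix} G \\ S+G \\ G \\ S+G\end{bmatrix}$ encodes $\alpha_{k+1}$: when $B_k$ has a $\oo$ subscript (rows $1,3$) condition (2) forces $\alpha_{k+1} = G$.

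The verification then amounts to expanding the matrix product: the $(i_0, i_1, \ldots, i_k)$-entry of the left vector times $(M_W)^k$ times the right vector, for any sequence of state-indices, equals the sum over all sign choices $(\alpha_0,\ldots,\alpha_{k+1})$ that avoid the three forbidden patterns, of the corresponding word $\alpha_0 B_{i_0}\alpha_1 B_{i_1}\cdots B_{i_k}\alpha_{k+1}$ — and this is exactly the sum of legal words of length $2k+3$. I would do this by a short induction on $k$: the base case $k=0$ is the left vector dotted with the right vector, which manifestly gives $\sum_L \w(L)$-pattern words of length $3$, matching the length-$3$ legal words already described in the proof of Theorem~\ref{thm:sum over words most heights}; the inductive step multiplies by one more copy of $M_W$, appending $\alpha_i B_i$ and checking that the zero entries of the $S$-summand are exactly the transitions excluded by (3). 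I expect the main (though still modest) obstacle to be purely bookkeeping: getting the orientation of the matrix conventions right, i.e.\ confirming that a word $\alpha_0 B_0 \alpha_1 \cdots$ is built left-to-right by the product left-vector $\cdot\, (M_W)^k \cdot$ right-vector rather than its transpose, and checking that in this noncommutative setting the letters accumulate in the correct order. Once the conventions are pinned down, the legality conditions match the zero-pattern of $M_W$ and the boundary vectors entry-by-entry, and the identity follows.
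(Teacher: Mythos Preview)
Your proposal is correct and takes essentially the same approach as the paper: both are transfer-matrix arguments on the four $B^\nu_\mu$-states, with the zero pattern in the $S$-summand of $M_W$ encoding condition~(3) and the boundary vectors encoding conditions~(1) and~(2). The only cosmetic difference is that the paper packages the argument by introducing an auxiliary vertex $*$ in a directed graph $G_w$ so that legal words become walks from $*$ to $*$ with no intermediate $*$, then invokes the standard transfer-matrix theorem and factors the product as a diagonal matrix times an adjacency-type matrix; your direct induction on $k$ accomplishes the same verification without the graph framing.
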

\begin{proof}
Consider the graph $G_w$ with vertices $\{*,B^\oo_\oo, B^\oo_\ox, B^\ox_\oo, B^\ox_\ox\}$ and the following directed, labeled edges: for every pair $u, v$ of vertices (allowing $u = v$), $G_w$ has a directed edge $u \xto{G} v$, and $G_w$ has a directed edge $u \xto{S} v$ unless $u = B^\nu_\oo$ or $u = *$ and $v = B^\oo_\mu$ or $v = *$.  The graph $G_w$ is illustrated in Figure~\ref{fig:graph}.

\begin{figure}
\centering
\scalebox{.6}{\input{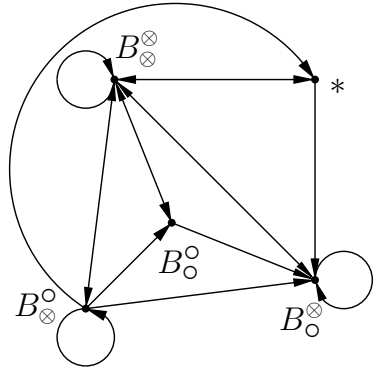}}
\caption{The $S$-labeled edges of the graph $G_w$ defined in the proof of Proposition~\ref{prop:gf words}.  Each pair of vertices is also joined by directed edges labeled $G$ (not shown).}
\label{fig:graph}
\end{figure}

We identify each walk
\[
* \xto{\alpha_0} B_0 \xto{\alpha_1} \cdots \xto{\alpha_k} B_k \xto{\alpha_{k+1}} *
\]
with the word
\[
\alpha_0 B_0 \alpha_1 \cdots B_k \alpha_{k+1}.
\]
Observe that the first two conditions in Definition~\ref{defn:legal} correspond to the restrictions on edges involving $*$ and the final condition corresponds to edges not involving $*$.  Thus the legal words are exactly the walks on this graph that start and end at $*$, with no intermediate copies of $*$.  We enumerate these walks using the transfer-matrix method, as in \cite[Section 4.7]{stanley:EC1}.

Let 
\[
X = \begin{bmatrix}
B^\oo_\oo & 0 & 0 & 0 \\
0 & B^\oo_\ox & 0 & 0 \\
0 & 0 & B^\ox_\oo & 0 \\
0 & 0 & 0 & B^\ox_\ox
\end{bmatrix}
\]
 and 
\[Y = G \cdot \JJ + S \cdot \begin{bmatrix}
0 & 0 & 1 & 1 \\
1 & 1 & 1 & 1 \\
0 & 0 & 1 & 1 \\
1 & 1 & 1 & 1
\end{bmatrix},\]
where $\JJ$ is the $4 \times 4$ matrix whose entries are all equal to $1$.  Examining $G_w$ and applying \cite[Theorem 4.7.1]{stanley:EC1}, we have that the sum of the words associated to the aforementioned walks is
\[
\begin{bmatrix}G & G & S + G & S + G \end{bmatrix}
X Y X Y \cdots X  
\begin{bmatrix}G \\ S + G \\ G \\ S+G \end{bmatrix},
\]
which is equivalent to the desired expression. 
\end{proof}

Let $M$ be the matrix 
\[
M = z \cdot \begin{bmatrix}
F^\oo_\oo & F^\oo_\ox & F^\ox_\oo & F^\ox_\ox \\
F^\oo_\oo & F^\oo_\ox & F^\ox_\oo & F^\ox_\ox \\
F^\oo_\oo & F^\oo_\ox & F^\ox_\oo & F^\ox_\ox \\
F^\oo_\oo & F^\oo_\ox & F^\ox_\oo & F^\ox_\ox
\end{bmatrix} + \begin{bmatrix}
0       & 0        & F^\ox_\oo & F^\ox_\ox \\
F^\oo_\oo & F^\oo_\ox & F^\ox_\oo & F^\ox_\ox \\
0       & 0        & F^\ox_\oo & F^\ox_\ox \\
F^\oo_\oo & F^\oo_\ox & F^\ox_\oo & F^\ox_\ox
\end{bmatrix}
\]
(whose entries are in $\RR[\![x,z]\!]$).  

\begin{cor}\label{cor:matrix magic}
The generating function for all sum-indecomposable $({\bf 3+1})$-avoiding posets of height at least $2$ is
\[
I_{\geq 2}(x,z) = \begin{bmatrix}zF^\oo_\oo &zF^\oo_\ox&(1 + z)F^\ox_\oo&(1+z)F^\ox_\ox\end{bmatrix} \cdot (\II - M)^{-1} \cdot \begin{bmatrix}z \\ 1 + z \\ z \\ 1+z \end{bmatrix},
\]
where $\II$ is the $4 \times 4$ identity matrix and $M$ is as above.
\end{cor}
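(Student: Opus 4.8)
The plan is to convert the word-counting result of Proposition~\ref{prop:gf words} into the claimed closed form by applying the weight function $\w$ and summing a geometric series. Recall from Theorem~\ref{thm:sum over words most heights} that $I_{\geq 2}(x,z) = \sum_L \w(L)$ over legal words $L$, and that every legal word has odd length $2k+3$ for some $k \geq 0$. So the first step is to apply $\w$ to the matrix identity in Proposition~\ref{prop:gf words}: since $\w$ is linear and multiplicative, $\w$ carries the noncommutative matrix product $\begin{bmatrix} G B^\oo_\oo & \cdots \end{bmatrix} (M_W)^k \begin{bmatrix} G \\ \vdots \end{bmatrix}$ to the ordinary (commutative) matrix product obtained by replacing each $B^\nu_\mu$ with $F^\nu_\mu$, each $G$ with $z$, and each $S$ with $1$. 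Under this substitution $\w(M_W) = M$, the row vector becomes $\begin{bmatrix} zF^\oo_\oo & zF^\oo_\ox & (1+z)F^\ox_\oo & (1+z)F^\ox_\ox \end{bmatrix}$ (combining the two terms of $M_W$ coming from $G$ and $S$ in the first and fourth columns), and the column vector becomes $\begin{bmatrix} z \\ 1+z \\ z \\ 1+z \end{bmatrix}$; one should double check that the $G$-versus-$(S+G)$ pattern in the vectors of Proposition~\ref{prop:gf words} is exactly consistent with the column structure of $M_W$, which is where the "$0$" entries in the $S$-part of $M_W$ matter.

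The second step is to sum over $k$. Writing $\vec u$ for the row vector and $\vec v$ for the column vector just described, the weight of all legal words of length $2k+3$ is $\vec u \, M^k \, \vec v$, so
\[
I_{\geq 2}(x,z) = \sum_{k \geq 0} \vec u \, M^k \, \vec v = \vec u \left( \sum_{k \geq 0} M^k \right) \vec v = \vec u \, (\II - M)^{-1} \, \vec v,
\]
where the geometric series $\sum_{k\geq 0} M^k = (\II - M)^{-1}$ converges in the ring $\RR[\![x,z]\!]^{4\times 4}$ because every entry of $M$ is a power series with zero constant term (each $F^\nu_\mu$ lies in $x^2\RR[\![x,z]\!]$, as the underlying $B^\nu_\mu(m,n)$ are empty unless $m,n\geq 1$), so $\II - M$ is invertible over the formal power series ring. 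This yields precisely the claimed formula.

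I do not expect any serious obstacle here; the only thing that requires care is the bookkeeping of step one — namely verifying that applying $\w$ to the row and column vectors of Proposition~\ref{prop:gf words} produces exactly the vectors in the statement, including the handling of the height-$2$ ($k=0$) term, where $\vec u \, \vec v = z^2 F^\oo_\oo + (1+z)z F^\ox_\oo + \cdots$ must match the expression $z^2 F + zF^\ox + zF_\ox + F^\ox_\ox$ identified in the proof of Theorem~\ref{thm:sum over words most heights}. (One also notes this corollary gives only $I_{\geq 2}$, not $I$, consistent with Corollary~\ref{cor:sum over words all heights}; the extra $z$ accounting for the single-vertex height-$1$ poset is not included.) Everything else is the standard transfer-matrix passage from a formal sum of walks to a matrix geometric series, justified because we are working over a formal power series ring in which $M$ is topologically nilpotent.
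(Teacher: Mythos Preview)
Your proof is correct and follows essentially the same route as the paper's, which simply cites Theorem~\ref{thm:sum over words most heights}, Proposition~\ref{prop:gf words}, and the fact that $\w$ is an algebra homomorphism. You have added the explicit details of applying $\w$ entrywise and justifying the convergence of the geometric series $\sum_{k\geq 0} M^k$ in the formal power series ring, which the paper leaves implicit.
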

\begin{proof}
The result follows from Theorem~\ref{thm:sum over words most heights}, Proposition~\ref{prop:gf words} and the fact that the weight map $\w$ is an algebra homomorphism between $\RR\llangle S,G,B^\oo_\oo, B^\oo_\ox, B^\ox_\oo, B^\ox_\ox\rrangle$ and $\RR[\![x, z]\!]$.
\end{proof}

Now that we have enumerated sum-indecomposable $({\bf 3+1})$-avoiding posets, the only remaining step is to express the generating function for all $({\bf 3+1})$-avoiding posets in terms of the generating function for sum-indecomposables.  This turns out to be extremely simple.  

\begin{prop}
\label{prop:final layering}
The generating function $G_T(x, z)$ for all trimmed strongly graded $({\bf 3+1})$-avoiding posets is given by
\[
G_T(x, z) = (1 - I(x,z))^{-1}.
\]
(Recall that $I(x, z)$ is the generating function for nonempty sum-indecomposable posets.)
\end{prop}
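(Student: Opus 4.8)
The plan is to feed the decomposition of Section~\ref{sec:layering} through a standard generating-function translation. First I would invoke Proposition~\ref{prop:weak decomposition} to write any trimmed strongly graded poset uniquely as an ordinal sum $P = P_1 \oplus P_2 \oplus \cdots \oplus P_m$ of sum-indecomposable posets, with the empty sum ($m = 0$) corresponding to the empty poset. By Proposition~\ref{prop:3+1 behavior layering}, $P$ avoids ${\bf 3+1}$ exactly when every $P_i$ does. In the reverse direction I would check that an ordinal sum of sum-indecomposable (hence trimmed, strongly graded) posets is again trimmed and strongly graded: the ordinal sum introduces no new vertices and neither splits nor merges any rank set, so each rank of the sum still carries at most one all-seeing vertex, and every maximal chain still has the same length. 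Hence trimmed strongly graded $({\bf 3+1})$-avoiding posets correspond bijectively to finite sequences $(P_1, \ldots, P_m)$ of nonempty sum-indecomposable $({\bf 3+1})$-avoiding posets.

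Next I would do the bookkeeping for the two variables. Under an ordinal sum the labeled (non-all-seeing) vertices of $P_1 \oplus P_2$ form the disjoint union of those of $P_1$ and $P_2$, and producing a consistent relabeling is precisely an order-preserving shuffle of the two label sets; this is the combinatorial operation realized on exponential generating functions in $x$ by multiplication. Simultaneously, the all-seeing vertices of $P_1 \oplus P_2$ are the disjoint union of those of $P_1$ and $P_2$, and since these are unlabeled and tracked ordinarily by $z$, their contribution also simply multiplies. Therefore a length-$m$ sequence $(P_1, \ldots, P_m)$ contributes $I(x, z)^m$ to $G_T(x, z)$, and summing over $m \geq 0$ yields
\[
G_T(x, z) = \sum_{m \geq 0} I(x, z)^m = \bigl(1 - I(x, z)\bigr)^{-1}.
\]
This is a legitimate identity of formal power series because every sum-indecomposable poset is nonempty, so $I(x, z)$ has vanishing constant term; the term $m = 0$ accounts for the constant $1$, i.e.\ the empty poset.

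I expect the only subtle point — the one I would spell out most carefully — to be the interaction of the ordinal sum with the \emph{trimmed} normalization and with the mixed exponential-in-$x$, ordinary-in-$z$ generating function: namely, that passing from a decomposed poset to the tuple of its relabeled, renormalized sum-indecomposable blocks really is a bijection, and that the two kinds of vertices recombine by the two corresponding kinds of product. This is the same species-flavored reasoning used around Proposition~\ref{prop:trimming gf}, and once it is in place the formula $G_T = (1 - I)^{-1}$ follows immediately.
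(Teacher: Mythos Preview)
Your proposal is correct and follows the same route as the paper: invoke Propositions~\ref{prop:weak decomposition} and~\ref{prop:3+1 behavior layering} to get a bijection between trimmed strongly graded $({\bf 3+1})$-avoiding posets and sequences of sum-indecomposable $({\bf 3+1})$-avoiding posets, then translate to the identity $G_T = (1 - I)^{-1}$ via the usual sequence construction. Your write-up is in fact more careful than the paper's about the mixed exponential-in-$x$, ordinary-in-$z$ bookkeeping, which the paper dispatches in a single phrase.
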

\begin{proof}
By Proposition~\ref{prop:weak decomposition} and Proposition~\ref{prop:3+1 behavior layering} each trimmed strongly graded $({\bf 3+1})$-avoiding poset $P$ corresponds to a unique ordinal sum $P_1 \layer P_2 \layer \cdots \layer P_k$ of sum-indecomposable $({\bf 3+1})$-avoiding posets, and all such sequences give a trimmed strongly graded $({\bf 3+1})$-avoiding poset $P$.  The result follows from the compositional formula for generating functions.
\end{proof}

The only thing remaining is arithmetic.
\begin{theorem}
\label{thm:strong final answer}
The generating function for all strongly graded $({\bf 3+1})$-avoiding posets is
\[
1 + \frac{e^{2x}(2e^x - 3) + e^x(e^x - 2)^2 \Psi(x)}{e^x(2e^x + 1) + (e^{2x} - 2e^x - 1) \Psi(x)}.
\]
\end{theorem}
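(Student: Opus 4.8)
The plan is to assemble the answer purely from results already in hand and then do bookkeeping with the exponential-generating-function machinery. First I would recall from Proposition~\ref{prop:trimming gf} that the generating function for all strongly graded $({\bf 3+1})$-avoiding posets (with labels) is obtained from the trimmed count $G_T(x,z)$ by substituting $z \mapsto e^x - 1$; that is, the desired answer is $G_T(x, e^x - 1)$. So the task reduces to computing $G_T(x,z)$ explicitly and then performing that substitution. By Proposition~\ref{prop:final layering} we have $G_T(x,z) = (1 - I(x,z))^{-1}$, and by Corollary~\ref{cor:sum over words all heights} we have $I(x,z) = z + I_{\geq 2}(x,z)$, where $I_{\geq 2}$ is given by the matrix expression in Corollary~\ref{cor:matrix magic}.

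Next I would carry out the linear algebra in Corollary~\ref{cor:matrix magic}. The matrix $M$ has rank at most $2$: rows $1$ and $3$ coincide, and rows $2$ and $4$ coincide, so $M = \begin{bmatrix} u_1 & u_2 & u_1 & u_2 \end{bmatrix}^{\!\top}$-type structure collapses $(\II - M)^{-1}$ to something controlled by a $2 \times 2$ computation. Concretely, I would write $M = C D$ where $C$ records the two distinct rows and $D$ is a $0$/$1$ selection matrix, so that $(\II - M)^{-1} = \II + C(\II - DC)^{-1}D$ with $\II - DC$ a $2 \times 2$ matrix; then the whole quadratic form in Corollary~\ref{cor:matrix magic} becomes a rational function in the four series $F^\nu_\mu$ and $z$. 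Substituting the formulas for $F^\oo_\oo, F^\oo_\ox = F^\ox_\oo, F^\ox_\ox$ from Proposition~\ref{prop:quark gfs} (all of which are polynomials in $e^{-x}$ times $\Psi(x)$, plus lower-order terms) gives $I(x,z)$ as an explicit rational function of $e^x$, $z$, and $\Psi(x)$. One convenient internal check at this stage: setting $z = 0$ should reproduce the generating function for trimmed height-restricted pieces with no all-seeing vertices, and the height-$2$ specialization $z^2 F + zF^\ox + zF_\ox + F^\ox_\ox$ from the proof of Theorem~\ref{thm:sum over words most heights} should appear as the lowest-height contribution.

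Then I would form $G_T(x,z) = (1 - z - I_{\geq 2}(x,z))^{-1}$, simplify, and finally substitute $z = e^x - 1$ (so $1 + z = e^x$), clearing denominators so that the answer is presented as a single ratio of entire functions in $e^x$ and $\Psi(x)$. After collecting terms, the denominator should come out to $e^x(2e^x + 1) + (e^{2x} - 2e^x - 1)\Psi(x)$ and the numerator, after adding the constant $1$ coming from the empty poset (which is strongly graded and $({\bf 3+1})$-avoiding but contributes nothing to $I$), should be $e^{2x}(2e^x - 3) + e^x(e^x - 2)^2 \Psi(x)$, matching the claimed formula. I would double-check by expanding as a power series in $x$: the coefficients of $x^0, x^1, x^2/2!, \ldots$ should be $1, 1, 3, 13, 111, \ldots$, agreeing with the small-case data recorded after Figure~\ref{fig:allsmall} (one, three, thirteen, one hundred eleven strongly graded posets on up to four vertices).

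The main obstacle is purely computational: the simplification of the $4 \times 4$ matrix inverse and the subsequent algebra after substituting the $F^\nu_\mu$ and $z = e^x - 1$ is lengthy and error-prone, since $\Psi(x)$ enters quadratically and one must carefully track which terms survive. Exploiting the rank-$2$ structure of $M$ and the coincidence $F^\oo_\ox = F^\ox_\oo$ is essential to keeping this manageable; without that observation one would be inverting a genuinely $4 \times 4$ matrix over a power series ring. Everything else is a direct appeal to the structural results already proved, so no new ideas are needed beyond organizing the bookkeeping and verifying the final rational function against the known initial terms.
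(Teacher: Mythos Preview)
Your proposal is correct and follows exactly the same approach as the paper: the paper's own proof is simply the one-line remark that the formula results from combining Corollaries~\ref{cor:sum over words all heights} and~\ref{cor:matrix magic} with Propositions~\ref{prop:trimming gf}, \ref{prop:quark gfs} and~\ref{prop:final layering}, which is precisely the chain of reductions you outline. Your additional observations about the rank-$2$ structure of $M$ and the symmetry $F^\oo_\ox = F^\ox_\oo$ are helpful for carrying out the arithmetic but are not required by the paper, which treats the entire step as routine computation.
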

\begin{proof}
This is just a calculation, combining Corollaries~\ref{cor:sum over words all heights} and~\ref{cor:matrix magic} with Propositions~\ref{prop:trimming gf}, \ref{prop:quark gfs} and~\ref{prop:final layering}.  For $\#P = 0$, $1$, \ldots, the resulting number of posets is $1$, $1$, $3$, $13$, $111$, $1381$, $22383$, \ldots.
\end{proof}

\section{Strongly Graded Posets Counted by Height}\label{sec:with height}

In this section, we refine the generating function of the previous section to count strongly graded $({\bf 3+1})$-avoiding posets with $n$ vertices of height $k$.  (This refinement is a natural one to ask for on its own terms; it will also be of use to us when we enumerate weakly graded $({\bf 3+1})$-avoiding posets in the next section.)  The only change in our approach is that we keep track of the height of the poset as we glue and stick quarks, and then again as we take the ordinal sum of sum-indecomposables.  To this end, let $b_{n, k}$ be the number of strongly graded $({\bf 3+1})$-avoiding posets on $n$ vertices of height $k$ and let 
\begin{equation}\label{eqn:strong with height}
H(x, t) = \sum_{n, k} b_{n, k} \frac{x^n}{n!}t^k
\end{equation}
be the generating function for these numbers.  To compute $H(x, t)$, we return to the ideas of Section~\ref{sec:climax}.  Using the same method as in the proof of Corollary~\ref{cor:matrix magic} but keeping track of height, we have that the generating function for sum-indecomposable $({\bf 3+1})$-avoiding posets of height $2$ or more is 
\[
H_{I}(x, z, t) = t^2 \begin{bmatrix}zF^\oo_\oo &zF^\oo_\ox&(1+z)F^\ox_\oo&(1+z)F^\ox_\ox\end{bmatrix} \cdot (\II - t \cdot M)^{-1} \cdot \begin{bmatrix}z \\ 1+z \\ z \\ 1+z \end{bmatrix}.
\]
If we let $H_T(x, z, t)$ be the generating function for trimmed strongly graded $({\bf 3+1})$-avoiding posets, then (as in Proposition~\ref{prop:final layering}) we have that
\[
H_T(x, z, t) = (1 - tz - H_{I}(x, z, t))^{-1},
\]
while from the same reasoning as in the proof of Proposition~\ref{prop:trimming gf} we have $H(x, t) = H_T(x, e^x - 1, t)$.  Working out the arithmetic gives the following result.
\begin{prop}\label{prop:posets by height}
Let $H(x, t)$ be the generating function counting strongly graded $({\bf 3+1})$-avoiding posets by number of vertices and height (as in Equation \eqref{eqn:strong with height}).  We have
\[
H(x, t) = \frac{e^x(e^x + te^{2x} + t^2(e^x - 1)^2) + t((1 - 3e^x + e^{2x}) + t(e^x - 1)^2(e^x - 2)) \Psi(x)}{e^x(e^x + t e^x +t^2) + ((1 - 3e^x + e^{2x})t + (e^x - 2)t^2) \Psi(x)}.
\]
\end{prop}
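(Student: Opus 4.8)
\textbf{Proof proposal for Proposition~\ref{prop:posets by height}.}

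The plan is to retrace the derivation of Theorem~\ref{thm:strong final answer}, but carrying the extra variable $t$ that records height, and at the end substitute the closed forms for the $F^\nu_\mu$ from Proposition~\ref{prop:quark gfs} and perform the algebra. The setup sentences preceding the statement already do the structural work: a sum-indecomposable $({\bf 3+1})$-avoiding poset of height $k+2$ decomposes uniquely into $k+1$ quarks joined by $k+1$ glue/stick symbols (Proposition~\ref{prop:strong decomposition for sum-indecomposables} and Theorem~\ref{thm:3+1 characterization}), so multiplying the contribution of each quark layer by $t$ turns the geometric series $(\II - M)^{-1}$ from Corollary~\ref{cor:matrix magic} into $(\II - tM)^{-1}$, with an overall factor $t^2$ accounting for the two ``boundary'' ranks, yielding the stated $H_I(x,z,t)$. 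The height-$1$ sum-indecomposable poset (a single all-seeing vertex, a point of weight $z$) contributes $tz$, so summing over all ordinal-sum sequences of sum-indecomposables via the compositional formula (exactly as in Proposition~\ref{prop:final layering}) gives $H_T(x,z,t) = (1 - tz - H_I(x,z,t))^{-1}$. Finally, reinstating the all-seeing vertices of each rank by the argument of Proposition~\ref{prop:trimming gf} means replacing $z$ by $e^x - 1$, so $H(x,t) = H_T(x, e^x-1, t)$.

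Once this is set up, what remains is a finite computation. First I would simplify the $4\times 4$ matrix inverse $(\II - tM)^{-1}$. The matrix $M$ has a low-rank structure: its four columns are scalar multiples of fixed power series (rows $2$ and $4$ agree, rows $1$ and $3$ agree), so $tM$ is a sum of a few rank-one pieces and $(\II - tM)^{-1}$ can be evaluated by the Sherman--Morrison--Woodbury formula or simply by solving the resulting $2\times 2$ or $3\times 3$ reduced system by hand. This reduces $H_I(x,z,t)$ to a ratio of explicit expressions in $t$, $z$ and the $F^\nu_\mu$. Then I substitute the formulas of Proposition~\ref{prop:quark gfs}, writing everything in terms of $e^{-x}$ and $\Psi(x)$; note that each $F^\nu_\mu$ is a linear combination of $1$ and $\Psi(x)$ with coefficients polynomial in $e^{-x}$, so $H_I$ is a rational function of $\Psi(x)$ of degree $1$ over degree $1$ (numerator and denominator affine in $\Psi$). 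Setting $z = e^x - 1$ and combining with $(1 - tz - H_I)^{-1}$ keeps this affine-in-$\Psi$ form, and clearing denominators of $e^{-x}$ (multiplying through by a suitable power of $e^x$) produces the displayed answer.

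The main obstacle is purely the bookkeeping of the final algebraic simplification: verifying that the many cancellations collapse the expression to the compact ratio stated, with the specific polynomials $e^x + te^{2x} + t^2(e^x-1)^2$ and $(1 - 3e^x + e^{2x}) + t(e^x-1)^2(e^x-2)$ in the numerator and $e^x + te^x + t^2$ and $(1 - 3e^x + e^{2x})t + (e^x-2)t^2$ in the denominator. As sanity checks I would confirm that setting $t = 1$ recovers Theorem~\ref{thm:strong final answer} (after the change of variables that adds back the global ordinal-sum structure), that the $t^0$ coefficient is $1$ (the empty poset), and that the low-order coefficients in $x$ reproduce the small cases $b_{1,1}=1$, $b_{2,1}=1$, $b_{2,2}=2$, and so on, matching the counts $1,1,3,13,111,\ldots$ when $t = 1$. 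These consistency checks, together with the fact that every step is a mechanical substitution into results already proved, make the proof routine once the matrix inverse is handled; I would present it as ``working out the arithmetic,'' as the paper already signals.
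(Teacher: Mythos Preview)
Your proposal is correct and follows essentially the same route as the paper: the text immediately preceding the proposition already records the formulas $H_I(x,z,t)=t^2[\,\text{row}\,](\II-tM)^{-1}[\,\text{col}\,]$, $H_T=(1-tz-H_I)^{-1}$, and $H(x,t)=H_T(x,e^x-1,t)$, and the paper's proof is likewise just ``working out the arithmetic'' from there. Your remarks about exploiting the low-rank structure of $M$ and the sanity checks are sensible additions, but the argument is otherwise identical.
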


The resulting coefficients are shown in Table~\ref{table:strong by height}.  

\begin{table}
\[
\begin{array}{p{5ex}l|*{7}{c}}
\multicolumn{9}{c}{\textrm{Height}} \\[1.5ex]
       &   & 0 & 1 &  2  &  3  &  4 &  5 & 6 \\
\cline{2-9}
       & 0 & 1 &   &     &     &    &    &   \\
       & 1 &   & 1 &     &     &    &    &   \\
       & 2 &   & 1 &   2 &     &    &    &   \\
$\# P$ & 3 &   & 1 &   6 &   6 &    &    &   \\
       & 4 &   & 1 &  50 &  36 &  24&    &   \\
       & 5 &   & 1 & 510 & 510 & 240& 120&   \\
       & 6 &   & 1 &7682 &7380 &4800&1800& 720
\end{array}
\]
\caption{The number of strongly graded $({\bf 3+1})$-avoiding posets of six or fewer vertices, by height.}
\label{table:strong by height}
\end{table}

\section{Weakly Graded Posets}\label{sec:weakly graded posets}

In this section, we expand our study to weakly graded posets.  We seek to apply the same methods that worked in the strongly graded case.  The results of Section~\ref{sec:local conditions}, the definition of a trimmed poset, and the results of Section~\ref{sec:trimming} carry over immediately to weakly graded posets.  We now seek to extend the rest of our work to this context.

We begin by proving Proposition~\ref{prop:not much difference}, which shows that weakly graded $({\bf 3+1})$-avoiding posets mostly ``look just like'' strongly graded posets. 
\begin{prop}\label{prop:not much difference}
In a weakly graded $({\bf 3+1})$-avoiding poset of height $k + 1$ such that $k \geq 1$, all maximal elements are of rank $k$ or $k - 1$ and all minimal elements are of rank $0$ or $1$.
\end{prop}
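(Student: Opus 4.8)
The plan is to argue by contradiction, exploiting the local characterization of $({\bf 3+1})$-avoidance given in Theorem~\ref{thm:3+1 locality v2}. Suppose $P$ is a weakly graded $({\bf 3+1})$-avoiding poset of height $k+1$ with $k \geq 1$, and suppose toward a contradiction that $P$ has a maximal element $v$ of rank $i$ with $i \leq k-2$. (The claim about minimal elements follows by the dual argument, passing to the opposite poset $P^{*}$, which is also weakly graded and $({\bf 3+1})$-avoiding.) Since the height of $P$ is $k+1$, there is a chain of length $k+1$, hence there is some vertex $w$ of rank $i+2$ (indeed of every rank from $0$ to $k$).

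First I would use Theorem~\ref{thm:3+1 locality v2}(\textit{III}): since $\rk(w) - \rk(v) = 2$, the vertices $v$ and $w$ must be comparable in $P$. As $\rk(w) > \rk(v)$ and the rank function increases along cover relations, comparability forces $v < w$. But then any saturated chain from $v$ up to $w$ passes through a vertex of rank $i+1$ that covers $v$, contradicting the maximality of $v$. The only subtlety is confirming that ranks are ``honest'' in the sense that $a < b$ implies $\rk(a) < \rk(b)$: this is immediate because each cover step increases the rank by exactly $1$, so any chain from $a$ to $b$ has length $\rk(b) - \rk(a)$, which is positive when $a < b$.

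The one genuine gap to fill is the existence of a vertex of rank $i+2$. This needs that the height $k+1$ guarantees that \emph{every} rank $0, 1, \ldots, k$ is nonempty. That in turn follows from the definition of the rank function together with weak gradedness: take a chain realizing the height, i.e., a chain on $k+1$ vertices; within a single connected component the minimum rank is $0$, and consecutive elements of a saturated chain have consecutive ranks, so a longest chain (which is necessarily saturated, and which we may extend downward to a minimal element of rank $0$) hits every rank from $0$ to $k$. Hence rank $i+2 \leq k$ is occupied. I do not expect this to be hard, but it is the step where one must actually invoke the structure of weakly graded posets rather than just the local $({\bf 3+1})$ condition.

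I anticipate the main (mild) obstacle is bookkeeping the inequality $i \leq k-2$ versus $i+2 \leq k$ and making sure the argument degenerates correctly at the boundary $k = 1$, where the statement says maximal elements have rank $1$ or $0$ — automatically true since all ranks lie in $\{0,1\}$. So the contradiction argument only needs to run for $k \geq 2$, and there $i \leq k-2$ indeed gives $i+2 \leq k$, keeping us inside the poset. With that in place, the proof is just the two-line comparability argument above, applied once for maximal elements and once, dually, for minimal elements.
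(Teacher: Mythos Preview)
Your proposal is correct and follows essentially the same approach as the paper: both invoke Theorem~\ref{thm:3+1 locality v2} to conclude that any vertex two ranks above a given vertex must be comparable to it, so a maximal element cannot sit two or more ranks below the top, and both dispatch the minimal-element case by symmetry. The paper's proof is simply terser---it omits your discussion of why rank $i+2$ is occupied and states the minimal case as ``identical'' rather than via explicit dualization.
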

\begin{proof} 
Let $P$ be a weakly graded $({\bf 3+1})$-avoiding poset of height at least $3$.  A maximal vertex in $P$ is precisely the same as a vertex not comparable to vertices of any higher rank.  However, by Theorem \ref{thm:3+1 locality v2}, every vertex of $P$ is comparable to all vertices of rank two larger.  Putting these two facts together, we immediately conclude that $P$ has no vertices of rank two larger than any of its maximal vertices; this is the desired result. The case of minimal vertices is identical.
\end{proof}

With this result in hand, we can immediately extend the remaining results of Section~\ref{sec:simplifications} to weakly graded posets.  
\begin{cor}\label{cor:all the same}
Every trimmed $({\bf 3+1})$-avoiding poset $P$ can be decomposed uniquely as an ordinal sum of sum-indecomposable posets and each of the resulting sum-indecomposable posets can be decomposed uniquely by sticking and gluing quarks (including a top and bottom quark), where these objects and operations are defined as before, subject to the following changes: 
\begin{itemize}
\item when $P$ is written as a maximal ordinal sum of nonempty posets, the topmost and bottommost summands may be weakly graded, not necessarily strongly graded; and
\item when $P$ is written as an ordinal sum and the topmost summand is written as a sticking and gluing of quarks, the top quark may have isolated vertices on its lower rank set; similarly, the bottom quark of the bottommost summand may have isolated vertices on its upper rank set.  (These isolated vertices are exactly the maximal vertices not of maximum rank and the minimal vertices not of minimum rank, respectively.)  In other words, the topmost and bottommost quarks in these summands may be middle quarks.
\end{itemize}
\end{cor}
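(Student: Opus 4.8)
The plan is to rerun the arguments of Section~\ref{sec:simplifications} almost verbatim, using Proposition~\ref{prop:not much difference} to confine every discrepancy between weak and strong gradedness to the extreme top and bottom of $P$. The one preliminary observation I need is that a maximal vertex $v$ of $P$ that is not of top rank --- necessarily of rank $k-1$, by Proposition~\ref{prop:not much difference} --- must be \emph{down-seeing}: it is not up-seeing (nothing covers it, and its rank is not maximal, so the relevant upper rank set is nonempty), so since $P$ is vigilant by Theorem~\ref{thm:3+1 locality v2} it is down-seeing. Symmetrically, every minimal vertex not of bottom rank is up-seeing. Call these the \emph{defect} maximal and minimal vertices of $P$.

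For the ordinal-sum decomposition I would define the cut ranks exactly as in the proof of Proposition~\ref{prop:weak decomposition} --- the least rank $i$ with $P(i)$ entirely up-seeing, then recurse on the part above it --- noting that the notion of up-seeing and the ordinal sum $\layer$ make sense for weakly graded posets and that up-/down-seeing status is preserved by ordinal sums. Existence and uniqueness of $P = P_1 \layer \cdots \layer P_m$ then go through with no change. The new content is that the nonextremal summands $P_2, \ldots, P_{m-1}$ are strongly graded: the defect maximal vertices of $P$ have rank $k-1$ and are not up-seeing, so $P(k-1)$ is not a cut rank and they all lie in $P_m$; and a defect maximal vertex of a summand $P_j$ (maximal in $P_j$, not of $P_j$'s top rank) would remain maximal in $P$ --- only the top rank of a summand acquires covers from outside that summand --- hence would be a defect maximal vertex of $P$, forcing $j = m$. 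The mirror argument handles defect minimal vertices and $P_1$. So only $P_1$ and $P_m$ can fail to be strongly graded, $P_1$ only by carrying defect minimal vertices and $P_m$ only by carrying defect maximal ones.

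For the quark decomposition of a sum-indecomposable summand, the strongly graded ones are covered directly by Proposition~\ref{prop:strong decomposition for sum-indecomposables}. For $P_m$ I would run the same construction: a defect maximal vertex $v$ (of rank one below the top of $P_m$) is down-seeing, so it does not appear in the penultimate quark, and it is not up-seeing, so it lands in the lower rank set of the top quark, where it is isolated since it is incomparable to everything of top rank. So, after stripping all-seeing vertices, the top quark of $P_m$ becomes a \emph{middle} quark --- it may carry isolated vertices on its lower rank set --- while every other quark, and the uniqueness argument (all-seeing vertices pin down the $\alpha_i$; vertices of rank $i$ that are not down-seeing can only come from the upper rank set of $Q_{i-1}$, those not up-seeing from the lower rank set of $Q_i$), is exactly as before. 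The bottom quark of $P_1$ is symmetric. As a bonus, the ${\bf 3+1}$-characterization of Theorem~\ref{thm:3+1 characterization} needs no modification: a defect minimal vertex, being up-seeing, is comparable to every vertex of rank $\geq 2$ (rank $2$ by up-seeingness, higher ranks by Theorem~\ref{thm:3+1 locality v2}), so it cannot participate in a copy of ${\bf 3+1}$ --- any $3$-chain either incomparable to it or containing it forces a pair of comparable vertices --- and dually for defect maximal vertices. The cases of height $0$, $1$, $2$, where the ``sum-indecomposable of height $k+2$'' framework degenerates, are checked by hand as before.

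The step I expect to be the main obstacle is precisely this bookkeeping around the extremal summands: establishing cleanly and without gaps that no defect vertex can migrate into a nonextremal summand, or into any quark other than the top quark of $P_m$ or the bottom quark of $P_1$, together with the small-height degeneracies of the quark framework.
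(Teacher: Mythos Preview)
Your proposal is correct and follows the same approach the paper intends: the paper does not give a detailed proof of this corollary, stating only that ``with this result in hand, we can immediately extend the remaining results of Section~\ref{sec:simplifications} to weakly graded posets,'' and your argument is precisely the natural unpacking of that sentence---use Proposition~\ref{prop:not much difference} to pin the defect maximal and minimal vertices to ranks $k-1$ and $1$, observe that such vertices are down-seeing (respectively up-seeing) and hence block those ranks from being cut ranks, and then rerun Propositions~\ref{prop:weak decomposition} and~\ref{prop:strong decomposition for sum-indecomposables} with only the extremal summands and extremal quarks affected. Your justification that ``only the top rank of a summand acquires covers from outside that summand'' is correct in this decomposition direction because $P$ is weakly graded, so every cover relation increases rank by exactly one.
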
  
This result allows us to directly apply the methods of Section~\ref{sec:climax}.

Observe that any $({\bf 3+1})$-avoiding poset with a chain containing $3$ or more elements must be connected, while posets of height $2$ can be somewhat more ``wild.''  This suggests that we should consider separately posets of height $2$ or less and posets of height $3$ or more.  We do this in the following sections.

\subsection{Posets of height at most $2$}\label{sec:short}

All posets of height at most $2$ are weakly graded and avoid ${\bf 3+1}$.  Of these, there is exactly one with height $0$ (the empty poset), and for each $n \geq 1$ there is exactly one poset on $n$ vertices of height $1$ (the antichain on $n$ vertices).  The number of posets of height $2$ on $n$ vertices is precisely $\sum_{m=1}^{n-1} \binom{n}{m} |A^\ox(n-m, m)|$: we choose $m$ vertices to be at rank $1$, and none of these can be isolated.  It follows from these three cases and from Appendix~\ref{section:yan calculus} that the generating function for weakly graded $({\bf 3+1})$-avoiding posets of height at most $2$ is
\[
1 + t (e^x - 1) + t^2 (e^{-x} \Psi(x) - e^x).
\]

\subsection{Posets of height at least $3$}

It follows from Corollary~\ref{cor:all the same} that when we write a trimmed $({\bf 3+1})$-avoiding poset as a maximal ordinal sum of smaller posets, the middle summands (if any) are all strongly graded, and so are sum-indecomposable under the definition of Section~\ref{sec:layering}.  The bottom summand satisfies the same rules for gluing and sticking quarks as before, except that the bottommost quark may be a middle quark rather than a bottom quark, and similarly for the top summand.  Equivalently, we may redefine \emph{legal word} by removing conditions 1 and 2 in Definition~\ref{defn:legal} to allow words that begin with $S B^\oo_\oo$ or $S B_\ox^\oo$ and end with $B^\oo_\oo S$ or $B_\oo^\ox S$.  This corresponds to a straightforward change in the generating function computations of Section~\ref{sec:climax}: the matrices $M_W$ and $M$ that appear in Proposition~\ref{prop:gf words} and Corollary~\ref{cor:matrix magic} do not need to change at all, though the vectors by which we multiply on the left and right need to be adjusted.  We must only take care in proving the analogue of Proposition~\ref{prop:final layering} in this context.

We now give a detailed plan of action.  We handle separately those posets that can and cannot be a summand in a nontrivial ordinal sum.  This gives us two cases: 
\begin{itemize}
\item posets of height $k + 1$, where $k \geq 2$, that consist of a single sum-indecomposable layer with at least one minimal vertex of rank $1$ and at least one maximal vertex of rank $k - 1$, and
\item posets of height $k + 1$, where $k \geq 2$, that do not fall into the previous class; these posets have no minimal vertices above rank $0$, or have no maximal vertices below rank $k$, or decompose as a nontrivial ordinal sum.
\end{itemize}
We compute the generating functions in these cases following the transfer-matrix approach used previously.  Note one important subtlety: in both cases, the transfer-matrix method generates some posets of height $2$ or less which we view as spurious.  Thus, we use the refined version of the generating functions computed in Section \ref{sec:with height} and make sure to eliminate the height-$0$, $1$ and $2$ terms in the first two cases. (The reason for this approach is that the transfer-matrix method as applied here fundamentally works on quarks; thus, it counts posets with isolated vertices multiple times (once for every possible assignment of the isolated vertices to rank $0$ or rank $1$).  Strongly graded posets of height $2$ or larger have no isolated vertices and so this issue does not arise in the strongly graded case.)

\subsubsection{Sum-indecomposable posets that cannot be used in an ordinal sum}\label{sec:can't be layered}

Some sum-indecomposable $({\bf 3+1})$-avoiding posets of height $k + 1$ cannot be used in a nontrivial ordinal sum to make another $({\bf 3+1})$-avoiding poset; these are exactly the ones with maximal vertices of rank $k - 1$ and minimal vertices of rank $1$.  Following the line of argument that culminated in Corollary~\ref{cor:matrix magic}, we see that the generating function for these posets is precisely
\[
t^2 \cdot \begin{bmatrix} F^\oo_\oo & F^\oo_\ox & 0 & 0 \end{bmatrix} \cdot (\II - t M)^{-1} \cdot \begin{bmatrix} 1 \\ 0 \\ 1 \\ 0 \end{bmatrix}
\]
where
\[
M = z \cdot \begin{bmatrix}
F^\oo_\oo & F^\oo_\ox & F^\ox_\oo & F^\ox_\ox \\
F^\oo_\oo & F^\oo_\ox & F^\ox_\oo & F^\ox_\ox \\
F^\oo_\oo & F^\oo_\ox & F^\ox_\oo & F^\ox_\ox \\
F^\oo_\oo & F^\oo_\ox & F^\ox_\oo & F^\ox_\ox
\end{bmatrix} + \begin{bmatrix}
0       & 0        & F^\ox_\oo & F^\ox_\ox \\
F^\oo_\oo & F^\oo_\ox & F^\ox_\oo & F^\ox_\ox \\
0       & 0        & F^\ox_\oo & F^\ox_\ox \\
F^\oo_\oo & F^\oo_\ox & F^\ox_\oo & F^\ox_\ox
\end{bmatrix}
\]
as before.

\subsubsection{All other posets}\label{sec:can be layered}

Trimmed $({\bf 3+1})$-avoiding posets not counted in the previous cases have height at least $3$ and can be written as (possibly trivial) ordinal sums of the following form: 
\begin{itemize}
\item they may or may not have a bottom layer (i.e., a sum-indecomposable ordinal summand) with all maximal vertices of the same rank but with minimal vertices at ranks $0$ and $1$;
\item they have some number (possibly $0$) of ``middle layers'' that are strongly graded sum-indecomposable posets; and
\item they may or may not have a top layer with all minimal vertices at rank $0$ but with maximal vertices at the rank below maximum rank.
\end{itemize}
The generating function for strongly graded sum-indecomposable $({\bf 3+1})$-avoiding posets is the function $H_I(x, z, t)$ defined in Section~\ref{sec:with height}.  We define $\Top(x, z, t)$ to be the generating function for sum-indecomposable $({\bf 3+1})$-avoiding posets with all minimal vertices of rank $0$ and with some maximal vertices of non-maximum rank, and analogously we define the generating function $\Bot(x, z, t)$.  Then the generating function for posets in this class coincides with
\begin{equation}\label{eqn:weak penultimate}
(1 + \Top(x, z, t)) (1 - H_I(x, z, t))^{-1} (1 + \Bot(x, z, t))
\end{equation}
for all powers of $t$ greater than or equal to $3$.  Moreover, we have
\[
\Top(x, z, t) = t^2 \cdot \begin{bmatrix}zF^\oo_\oo &zF^\oo_\ox&(1+z)F^\ox_\oo&(1+z)F^\ox_\ox\end{bmatrix} \cdot (\II - t M)^{-1} \cdot \begin{bmatrix} 1 \\ 0 \\ 1 \\ 0 \end{bmatrix}
\]
and 
\[
\Bot(x, z, t) = t^2 \cdot \begin{bmatrix} F^\oo_\oo & F^\oo_\ox & 0 & 0\end{bmatrix} \cdot (\II - t M)^{-1} \cdot \begin{bmatrix} z \\ 1 + z \\ z \\ 1 + z \end{bmatrix}.
\]

\subsection{All weakly graded $({\bf 3+1})$-avoiding posets}

Finally, we combine the work in the preceding subsections to enumerate weakly graded $({\bf 3+1})$-avoiding posets.

\begin{theorem}\label{thm:weak gf}
The generating function for weakly graded $({\bf 3+1})$-avoiding posets counted by number of vertices and height is
\begin{multline*}
1 + (e^{x} - 1)t + (e^{-x} \Psi(x) - e^x) t^2 + \\
t^3 \frac{e^{3x} + e^{3x}t - e^x(2 e^x +(1 + 2e^x - e^{2x})t) \Psi(x) - ((1 - 3e^x + e^{2x}) + (e^x - 2)t)\Psi(x)^2}{e^x(e^x + t e^x + t^2) + ((1 - 3e^x + e^{2x})t + (e^x - 2)t^2)\Psi(x)}.
\end{multline*}
The generating function for weakly graded $({\bf 3+1})$-avoiding posets counted by number of vertices is
\[
(e^{-x} - 1)\Psi(x) + \frac{2e^{3x} + e^{2x}(e^{x} - 2)\Psi(x)}{e^x(2e^{x} + 1) + (e^{2x} - 2e^x - 1) \Psi(x)}.
\]
\end{theorem}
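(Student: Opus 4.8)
The strategy is to assemble the generating function from the three ingredients established in the preceding subsections and then apply the substitution that converts trimmed posets into arbitrary weakly graded posets.

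Since every weakly graded $({\bf 3+1})$-avoiding poset has a height, I would organize the count by height. Those of height $0$ and $1$ contribute the terms $1$ and $(e^{x}-1)t$, and by Section~\ref{sec:short} those of height $2$ contribute $(e^{-x}\Psi(x)-e^{x})t^{2}$; together these give the first three summands of the claimed formula. For posets of height at least $3$, Corollary~\ref{cor:all the same} justifies splitting them into the two classes of Sections~\ref{sec:can't be layered} and~\ref{sec:can be layered}: the single sum-indecomposable layers that cannot appear as a summand of a nontrivial ordinal sum — equivalently, those with a maximal vertex one rank below the top and a minimal vertex one rank above the bottom — and all remaining trimmed $({\bf 3+1})$-avoiding posets of height $\geq 3$. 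The second class is by construction the complement of the first, so the two are disjoint and exhaustive, and I would handle each using the transfer-matrix expressions already written down: $t^{2}\,[\,F^\oo_\oo,\, F^\oo_\ox,\, 0,\, 0\,]\,(\II-tM)^{-1}\,[\,1,\,0,\,1,\,0\,]^{T}$ for the first, and the product $(1+\Top)(1-H_I)^{-1}(1+\Bot)$ of~\eqref{eqn:weak penultimate} for the second, with $\Top$, $\Bot$ and $H_I$ as in Sections~\ref{sec:can be layered} and~\ref{sec:with height}.

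The one genuinely non-mechanical point — flagged in the discussion opening Section~\ref{sec:weakly graded posets} — is that the transfer-matrix bookkeeping operates on quarks and is free to assign each isolated vertex to rank $0$ or rank $1$, so both of these expressions carry spurious contributions in the coefficients of $t^{0}$, $t^{1}$ and $t^{2}$; those low-height posets are already counted correctly by the height-at-most-$2$ ingredient, so one must retain only the coefficient of $t^{k}$ for $k\geq 3$ in each. Substituting $z=e^{x}-1$ into these two truncated expressions (the height-at-most-$2$ ingredient is already in this form) — this is the weakly graded analogue of Proposition~\ref{prop:trimming gf}, and it leaves $t$ untouched because the all-seeing vertices are reinserted at already-occupied ranks — then adding all three pieces and combining over a common denominator produces the first displayed formula. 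The second formula follows by setting $t=1$, which is legitimate as a formal power series in $x$ since an $n$-element poset has height at most $n$, and then simplifying the resulting rational function of $e^{x}$ and $\Psi(x)$.

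I expect the bulk of the work, rather than any conceptual obstacle, to be these two algebraic simplifications: showing that the sum of a degree-$\leq 2$ polynomial in $t$ and two truncated matrix expressions collapses to the compact form displayed, and likewise for its $t=1$ specialization. As a sanity check, expanding the $t=1$ formula should reproduce the counts $1,1,3,19,195,\dots$ consistent with the small cases recorded in the introduction.
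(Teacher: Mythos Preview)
Your proposal is correct and follows essentially the same approach as the paper's proof: add the transfer-matrix expressions from Sections~\ref{sec:can't be layered} and~\ref{sec:can be layered}, discard the $t^0$, $t^1$, $t^2$ terms, substitute $z=e^x-1$, add the height-$\leq 2$ contribution from Section~\ref{sec:short}, and simplify (then set $t=1$ for the second formula). The paper's own proof is just a two-sentence description of exactly this computation.
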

\begin{proof}
The proof is a straightforward (albeit messy) computation: we add the generating functions from Section~\ref{sec:can't be layered} to the expression from Equation \eqref{eqn:weak penultimate}, kill the $t^0$, $t^1$ and $t^2$ terms, and add the result to the generating function from Section~\ref{sec:short}.  Substituting $t = 1$ and rearranging slightly gives the second formula. 
\end{proof}

The resulting coefficients are shown in Table~\ref{table:weak by height}.  Disregarding height, the numbers of weakly graded $({\bf 3+1})$-avoiding posets on $0$, $1$, $2$, \ldots vertices are $1$, $1$, $3$, $19$, $195$, $2551$, $41343$, \ldots.

\begin{table}
\[
\begin{array}{p{5ex}l|ccccccc}
\multicolumn{9}{c}{\textrm{Height}} \\[1.5ex] 
       &   & 0 & 1 &  2  &  3  & 4  & 5  & 6 \\
\cline{2-9}
       & 0 & 1 &   &     &     &    &    &   \\
       & 1 &   & 1 &     &     &    &    &   \\
       & 2 &   & 1 &  2  &     &    &    &   \\
$\# P$ & 3 &   & 1 & 12  &  6  &    &    &   \\
       & 4 &   & 1 & 86  &  84 &  24&    &   \\
       & 5 &   & 1 &840  &1110 & 480& 120&   \\
       & 6 &   & 1 &11642&16620&9120&3240& 720
\end{array}
\]
\caption{The number of weakly graded $({\bf 3+1})$-avoiding posets of six or fewer vertices, by height.}
\label{table:weak by height}
\end{table} 

\section{Asymptotics}\label{sec:asymptotics}

In this section, we compute asymptotics for the number of graded $({\bf 3+1})$-avoiding posets.  First, we give asymptotics for the coefficients of the series $\Psi(x)$; then we give the asymptotics for our posets in terms of the asymptotics of $\Psi$. Define $\displaystyle \psi_n = \sum_{i = 0}^n \frac{2^{i(n - i)}}{i! (n - i)!}$ so that $\Psi(x) = 1 + \sum_{n \geq 1} \psi_n x^n$.  In the next result, we give asymptotics for the coefficients $\psi_n$. 

\begin{prop}\label{prop:asymptotics for psi}              
There exist constants $C_1$ and $C_2$ such that           
\[     
\psi_{2k} \sim C_1 \cdot \frac{2^{k^2}}{(k!)^2} \qquad \textrm{and} \qquad \psi_{2k + 1} \sim C_2 \cdot \frac{2^{k(k+1)}}{k! (k+1)!}.
\]     
\end{prop}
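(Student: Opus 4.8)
The plan is to analyze the sum $\psi_n = \sum_{i=0}^n \frac{2^{i(n-i)}}{i!(n-i)!}$ by a Laplace-type (largest-term) argument, showing that the sum is dominated by its central term(s) up to a multiplicative constant. Write $\psi_n = \frac{1}{n!}\sum_{i=0}^n \binom{n}{i} 2^{i(n-i)}$ and set $T_i = \binom{n}{i}2^{i(n-i)}$. The exponent $i(n-i)$ is a downward parabola in $i$ maximized at $i = n/2$, and the binomial coefficient is also maximized there, so the ratio $T_{i+1}/T_i = \frac{n-i}{i+1} 2^{n-1-2i}$ is greater than $1$ for $i$ well below $n/2$ and much less than $1$ for $i$ well above; in fact for $n$ even the dominant term is $i = n/2$ and for $n$ odd the two dominant terms are $i = (n\pm 1)/2$, which are equal. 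First I would pin down the tail decay: for $i = n/2 - j$ (with $n = 2k$) one has $T_{k-j}/T_k = \frac{\binom{2k}{k-j}}{\binom{2k}{k}} 2^{-j^2}$, and since $\binom{2k}{k-j}/\binom{2k}{k} \le 1$, the ratios $T_{k-j}/T_k$ are bounded above by $2^{-j^2}$ uniformly in $k$, and similarly on the other side. Hence $\frac{n!\,\psi_n}{T_{\lfloor n/2\rfloor}} = \sum_j T_{\lfloor n/2\rfloor + j}/T_{\lfloor n/2\rfloor}$ is sandwiched between $1$ and $\sum_{j \in \ZZ} 2^{-j^2} < \infty$ (with the analogous two-central-term bookkeeping in the odd case).

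The remaining work is to show the ratio actually \emph{converges} to a constant, not merely that it is bounded. For this I would argue that for each fixed $j$, the ratio $\binom{2k}{k-j}/\binom{2k}{k} \to 1$ as $k \to \infty$ (an elementary computation: it is $\prod_{\ell=1}^{j}\frac{k-\ell+1}{k+\ell}$, a finite product each factor of which tends to $1$), so $T_{k-j}/T_k \to 2^{-j^2}$ termwise, and similarly $T_{k+j}/T_k \to 2^{-j^2}$. Combined with the uniform domination $T_{k\pm j}/T_k \le 2^{-j^2}$ and the summability of $\sum_j 2^{-j^2}$, dominated convergence for series gives $\frac{(2k)!\,\psi_{2k}}{T_k} \to \sum_{j \in \ZZ} 2^{-j^2} =: \theta$. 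Therefore $\psi_{2k} \sim \theta \cdot \frac{T_k}{(2k)!} = \theta \cdot \frac{1}{(k!)^2} 2^{k^2}$, giving $C_1 = \theta = \sum_{j\in\ZZ} 2^{-j^2}$. The odd case is identical in spirit: with $n = 2k+1$ the two central terms $T_k$ and $T_{k+1}$ coincide (both equal $\binom{2k+1}{k}2^{k(k+1)}$), so one factors out $T_k = \binom{2k+1}{k}2^{k(k+1)}$ and runs the same dominated-convergence argument on $\sum_i T_i/T_k$, obtaining $\psi_{2k+1} \sim C_2 \cdot \frac{2^{k(k+1)}}{k!(k+1)!}$ with $C_2 = 2\theta$ (the factor $2$ coming from the two equal central terms; the two halves each contribute $\sum_{j\ge 0} 2^{-j(j+1)}$-type sums whose total is $2\sum_{j\in\ZZ}2^{-j^2}$ after reindexing — I would double-check this constant during write-up, but its exact value is irrelevant to the statement).

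The main obstacle is purely bookkeeping rather than conceptual: making the uniform domination $T_{k+j}/T_k \le 2^{-j^2}$ genuinely clean for \emph{all} $j$ (including $|j|$ close to $k$, where $\binom{2k}{k-j}$ is small but the inequality $\binom{2k}{k-j} \le \binom{2k}{k}$ still holds, so no real difficulty) and handling the asymmetry and the two-central-term normalization carefully in the odd case. I do not expect to need anything beyond elementary estimates — no saddle-point analysis or singularity analysis of $\Psi$ is required, since $\Psi$ is entire and the growth of its coefficients is governed entirely by this discrete Laplace phenomenon. A cosmetic alternative would be to recognize $\sum_{j\in\ZZ} 2^{-j^2}$ as a value of a Jacobi theta function, $\vartheta_3(0, 1/2)$, but this is not needed for the asymptotic statement as phrased.
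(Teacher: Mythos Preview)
Your proposal is correct and is essentially the paper's own argument, carried out with more care: both center the sum at $i = \lfloor n/2 \rfloor$, factor out the central term, and recognize the limiting constant as a theta value (the paper writes $C_1 = \vartheta_3(0,1/2)$ and $C_2 = 2^{1/4}\vartheta_2(0,1/2)$, whereas you supply the dominated-convergence justification that the paper leaves implicit). Your tentative guess $C_2 = 2\theta$ is not right --- the correct value is $\sum_{j\in\ZZ} 2^{\,j-j^2} = 2\sum_{j\ge 0} 2^{-j(j+1)}$ --- but as you note, the statement only asserts existence of the constants, so this does not affect the proof.
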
            
\begin{proof}         
For $n = 2k$ even, we can write            
\begin{align*}        
\psi_{2k} & = \sum_{i = 0}^{2k} \frac{2^{i(2k - i)}}{i! (2k - i)!} \\          
& = \sum_{i = -k}^k \frac{2^{k^2 - i^2}}{(k + i)!(k - i)!} \\
& \sim \frac{2^{k^2}}{(k!)^2} \vartheta_3(0, 1/2)
\end{align*}          
where $\vartheta_3$ is a Jacobi theta function.  Similarly, when $n = 2k + 1$ is odd we find 
\[
\psi_{2k+1} \sim 2^{1/4} \vartheta_2(0, 1/2) \frac{2^{k(k+1)}}{k!(k+1)!},
\]
as needed.
\end{proof}     
 
Let $g_n$ be the number of strongly graded $({\bf 3+1})$-avoiding posets on $n$ vertices and let $w_n$ be the number of weakly graded $({\bf 3+1})$-avoiding posets on $n$ vertices.

\begin{theorem}\label{thm:poset asymptotics}    
We have $g_n \sim w_n \sim n! \cdot \psi_n$.             
\end{theorem}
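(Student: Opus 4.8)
The plan is to extract, from the generating functions computed in Sections~\ref{sec:climax} and~\ref{sec:weakly graded posets}, the dominant singularity and compare it to that of $\Psi(x)$. The key observation is that both generating functions (for strongly and weakly graded posets) are rational functions of $e^x$ and $\Psi(x)$, and in each case the denominator has the form $e^x(2e^x + 1) + (e^{2x} - 2e^x - 1)\Psi(x)$. Since $e^x$ is entire, the singularities of these generating functions come either from a zero of the denominator or from a singularity of $\Psi(x)$ itself. So the first step is to understand the singular behavior of $\Psi(x)$.

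First I would analyze $\Psi(x) = \sum_{m, n \geq 0} \frac{2^{mn} x^{m+n}}{m! n!}$. The asymptotics from Proposition~\ref{prop:asymptotics for psi} show that $\psi_n$ grows like $2^{n^2/4}/(n/2)!^2$ up to a bounded oscillating factor; in particular $\psi_n^{1/n} \to \infty$, so $\Psi(x)$ has radius of convergence $0$ and defines no analytic function near the origin in the usual sense — it is a divergent (Gevrey-type) series. This means the ``singularity analysis'' must be done at the level of formal power series / coefficient asymptotics rather than by locating a complex singularity. The right framework is therefore: write each target generating function as $\Psi(x)$ times a correction, and show the correction factor is a convergent power series (radius of convergence positive, hence negligible compared to $\Psi$), so that the coefficient of $x^n$ in the product is asymptotic to a constant times $\psi_n$, with the constant being the value of the correction factor ``at $x = 0$'' in an appropriate sense. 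Concretely, for the strongly graded generating function $1 + \frac{e^{2x}(2e^x-3) + e^x(e^x-2)^2\Psi(x)}{e^x(2e^x+1) + (e^{2x}-2e^x-1)\Psi(x)}$, I would divide numerator and denominator by $\Psi(x)$, obtaining a ratio whose numerator tends to $e^x(e^x-2)^2$ and whose denominator tends to $e^{2x} - 2e^x - 1$ as $\Psi(x) \to \infty$ (i.e., as a formal manipulation treating $1/\Psi(x)$ as small). Thus $g_n/n!$ should behave like $[x^n]\left(\frac{e^x(e^x-2)^2}{e^{2x}-2e^x-1}\right)$ corrected by lower-order terms — but this is \emph{not} what we want; rather, since $\Psi$ dominates, the leading asymptotics of $g_n$ is governed by the non-$\Psi$ part of the expansion. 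Re-examining: the claim $g_n \sim n!\,\psi_n$ says the correction factor evaluates to $1$ to leading order, which happens precisely when the coefficient of the highest ``power'' of $\Psi$ in numerator and denominator are such that the ratio of leading terms of numerator/denominator, as a function multiplying $\psi_n$, contributes $1$. I would make this rigorous by expanding $\frac{N(x) + P(x)\Psi(x)}{D(x) + R(x)\Psi(x)} = \frac{P(x)}{R(x)} + \frac{N(x)R(x) - P(x)D(x)}{R(x)(D(x) + R(x)\Psi(x))}$ and arguing the second term's coefficients are $o(\psi_n)$ because the denominator carries a factor of $\Psi$.

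The key steps, in order: (1) recall Proposition~\ref{prop:asymptotics for psi} and note $\psi_n$ is (up to bounded oscillation) super-exponentially growing, with $\psi_{n-1} = o(\psi_n)$ and more precisely $\psi_{n-k} \psi_{\ell} = o(\psi_n)$ for the relevant shifted products, so that convolution with any fixed convergent power series $f(x) = \sum a_k x^k$ gives $[x^n](f(x)\Psi(x)) = \sum_k a_k \psi_{n-k} \sim a_0 \psi_n$; (2) establish the algebraic identity above writing each target generating function as $\frac{P(x)}{R(x)} + \frac{(\text{analytic})}{R(x)(D(x) + R(x)\Psi(x))}$, where $P(x)/R(x)$ is analytic near $0$ with positive radius of convergence; (3) for the error term, show that $\frac{1}{D(x) + R(x)\Psi(x)}$ has coefficients $O(1/\psi_n)$ or better — intuitively $1/\Psi$ is ``tiny'' — hence its product with analytic functions contributes $o(\psi_n)$ after multiplying back by $n!$, wait: here I must be careful because we want $g_n \sim n!\psi_n$, i.e. the \emph{leading} term should be $n!\psi_n$, not $n! \cdot (\text{const})$. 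So in fact the roles are reversed: $\frac{P(x)}{R(x)}$ contributes only $O(n! \rho^{-n})$ for some $\rho > 0$ (exponentially small compared to $n!\psi_n$), and the \emph{main} contribution $n!\psi_n$ comes from the error term $\frac{N R - P D}{R(D + R\Psi)}$ — which means I actually want to expand the other way, factoring $\Psi$ out of the denominator: $\frac{1}{D + R\Psi} = \frac{1}{R\Psi} \cdot \frac{1}{1 + D/(R\Psi)} = \frac{1}{R\Psi}(1 - D/(R\Psi) + \cdots)$, and since $1/\Psi$ is itself a well-defined formal power series with \emph{positive} radius of convergence (because $\Psi(0) = 1 \neq 0$), this expansion converges formally and $[x^n]$ of it is $\sim$ (something) $\cdot \psi_n$ after one checks $[x^n](g(x)/\Psi(x) \cdot \Psi(x)\text{-type terms})$... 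This bookkeeping is the crux.

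The main obstacle I expect is exactly this bookkeeping: $\Psi(x)$ is a divergent series, so one cannot naively invoke Flajolet–Odlyzko singularity analysis, and one must instead prove a clean ``$[x^n](f(x)\Psi(x)) \sim f(0)\psi_n$ whenever $f$ is analytic at $0$'' lemma \emph{and} its reciprocal-type analogue, using the precise growth rate $\psi_n / \psi_{n-1} \to \infty$ (which follows from Proposition~\ref{prop:asymptotics for psi}: the ratio is roughly $2^{(2k-1)/2} \cdot 2k$ or so, certainly $\to \infty$). Once that lemma is in hand, the proof reduces to: (a) write $g_n/n!$ and $w_n/n!$ as coefficients of explicit rational-in-$(e^x, \Psi)$ functions; (b) perform the partial-fraction-type decomposition separating the ``$\Psi$-dominated'' part from an analytic remainder; (c) identify the $\Psi$-dominated part's leading coefficient as exactly $1$ by a direct check that the ratio of the $\Psi$-leading coefficients of numerator and denominator, evaluated at $x=0$, equals $1$ — here using $e^0 = 1$ so that $\frac{e^x(e^x-2)^2}{e^{2x}-2e^x-1}\big|_{x=0} = \frac{1 \cdot 1}{1 - 2 - 1} = -\frac{1}{2}$, hmm, which is not $1$, so the matching must be arranged differently, perhaps the correct leading behavior comes from the ``$+1$'' and the structure of the subtracted terms; resolving this sign/constant discrepancy and confirming it yields exactly the clean asymptotic $n!\psi_n$ (rather than $c \cdot n!\psi_n$ for some other constant $c$) is the delicate point I would spend the most care on. Finally, for $w_n$, the same argument applies because Theorem~\ref{thm:weak gf} gives a generating function with the \emph{same} denominator $e^x(2e^x+1) + (e^{2x}-2e^x-1)\Psi(x)$, and subtracting the strongly-graded generating function leaves a difference whose coefficients one checks are $o(n!\psi_n)$, yielding $w_n \sim g_n \sim n!\psi_n$.
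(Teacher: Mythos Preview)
Your instinct is right—this is a problem about composing an analytic function with a divergent series—but your execution has a genuine gap, and the paper's approach is both cleaner and different.

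The paper does not attempt any partial-fraction manipulation. Instead, it writes $G(x) = F_1(x, \Psi(x) - 1)$ and $W(x) = F_2(x, \Psi(x) - 1)$ for explicit bivariate functions $F_1, F_2$ analytic near the origin, and then invokes a theorem of Bender (\cite[Theorem~1]{bender}): if $B(x) = F(x, A(x))$ with $F$ analytic near $(0,0)$, $a_{n-1}/a_n \to 0$, and $\sum_{k=1}^{n-1}|a_k a_{n-k}| = O(a_{n-1})$, then $b_n \sim C a_n$ with $C = \tfrac{\partial F}{\partial y}\big|_{(0,0)}$. The work then reduces to verifying the three hypotheses (the third, the convolution estimate $\sum_k \psi_k \psi_{n-k} = O(\psi_{n-1})$, is the only nontrivial one) and computing $\tfrac{\partial F_i}{\partial y}\big|_{(0,0)} = 1$.

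Your difficulty with the constant is diagnostic of the gap. Writing the generating function as $\frac{N + P\Psi}{D + R\Psi}$ and computing $P(0)/R(0) = -\tfrac12$ gives the \emph{wrong} quantity: that ratio is the limiting value of the fraction as $\Psi \to \infty$, which is irrelevant to coefficient asymptotics. The correct constant is the derivative in the $\Psi$-direction at $(x,\Psi) = (0,1)$, because Bender's mechanism is that the linear-in-$y$ term of $F(x,y)$ contributes $\sim C\psi_n$ while the constant-in-$y$ term contributes $O(1)$ and the quadratic-and-higher terms contribute $O(\psi_{n-1})$ \emph{precisely by the convolution hypothesis}. Your decomposition $\frac{P}{R} + \frac{NR - PD}{R(D + R\Psi)}$ buries this structure: the second piece still contains all powers of $\Psi$ through the expansion of $1/(D + R\Psi)$, so to extract its leading coefficient you would need exactly the same convolution bound $\sum \psi_k \psi_{n-k} = O(\psi_{n-1})$ that you only gesture at, and after all that work you would just be reproving Bender's theorem in this special case. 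Your lemma ``$[x^n](f\Psi) \sim f(0)\psi_n$ for analytic $f$'' handles only the linear term and is not by itself enough to control the denominator.

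In short: identify the problem as a Bender-type composition, state and verify the convolution hypothesis on $\psi_n$ (this is where Proposition~\ref{prop:asymptotics for psi} is actually used), and compute the partial derivative rather than $P/R$.
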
   
The proof relies on the following special case of a theorem of Bender \cite{bender}, which may also be found in \cite[Theorem 7.3]{odlyzko}:
\begin{theorem}[{\cite[Theorem 1]{bender}}]\label{thm:bender}  
Suppose $A(x) = \sum_{n \geq 1} a_n x^n$, that $F(x, y)$ is a formal power series in $x$ and $y$, and that $B(x) = \sum_{n \geq 0} b_n x^n = F(x, A(x))$.  Let $C = \frac{\partial}{\partial y} F \Big|_{(0, 0)}$.  Suppose further that           
\begin{enumerate}          
\item $F(x, y)$ is analytic in a neighborhood of $(0, 0)$,     
\item $\displaystyle \lim_{n \to \infty} \frac{a_{n - 1}}{a_n} = 0$, and      
\item $\displaystyle \sum_{k = 1}^{n - 1} |a_k a_{n - k}| = O(a_{n - 1}).$      
\end{enumerate} 
Then            
\[              
b_n = C \cdot a_n + O(a_{n - 1})          
\]              
and in particular $b_n \sim C a_n$.       
\end{theorem}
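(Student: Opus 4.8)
The plan is to derive both asymptotics from Bender's theorem (Theorem~\ref{thm:bender}), applied with $A(x) = \Psi(x) - 1 = \sum_{n \geq 1}\psi_n x^n$, so that $a_n = \psi_n$. Substituting $\Psi(x) = 1 + y$ into the generating function of Theorem~\ref{thm:strong final answer} exhibits it as $F_g\bigl(x, A(x)\bigr)$, where
\[
F_g(x, y) = 1 + \frac{e^{2x}(2e^x - 3) + e^x(e^x - 2)^2(1 + y)}{e^x(2e^x + 1) + (e^{2x} - 2e^x - 1)(1 + y)},
\]
and the same substitution into the second formula of Theorem~\ref{thm:weak gf} exhibits it as $F_w\bigl(x, A(x)\bigr)$, where
\[
F_w(x, y) = (e^{-x} - 1)(1 + y) + \frac{2e^{3x} + e^{2x}(e^x - 2)(1 + y)}{e^x(2e^x + 1) + (e^{2x} - 2e^x - 1)(1 + y)}.
\]
Since these exponential generating functions have $n$th coefficients $g_n/n!$ and $w_n/n!$, it suffices to prove $[x^n]F_g(x, A(x)) \sim \psi_n$ and $[x^n]F_w(x, A(x)) \sim \psi_n$; then $g_n \sim n!\,\psi_n$, $w_n \sim n!\,\psi_n$, and in particular $g_n \sim w_n$.

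First I would verify the three hypotheses of Theorem~\ref{thm:bender}. For hypothesis~(1): in both $F_g$ and $F_w$ the numerator and denominator of the rational part are polynomials in $e^x$ and $e^{-x}$ (hence entire in $x$) and affine in $y$, and the common denominator $e^x(2e^x + 1) + (e^{2x} - 2e^x - 1)(1 + y)$ equals $3 - 2 = 1 \neq 0$ at $(x, y) = (0, 0)$, while the extra summand $(e^{-x} - 1)(1 + y)$ of $F_w$ is entire; hence $F_g$ and $F_w$ are analytic near the origin. Hypothesis~(2), $\lim_n \psi_{n - 1}/\psi_n = 0$, is immediate from Proposition~\ref{prop:asymptotics for psi}, which gives $\psi_{2k}/\psi_{2k + 1}\sim (C_1/C_2)(k + 1)2^{-k}\to 0$ and $\psi_{2k - 1}/\psi_{2k}\sim(C_2/C_1)\,k\,2^{-k}\to 0$.

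Hypothesis~(3), the convolution estimate $\sum_{k = 1}^{n - 1}\psi_k\psi_{n - k} = O(\psi_{n - 1})$, is the one step needing genuine work, and I expect it to be the main obstacle. The mechanism is that $\psi$ grows super-exponentially: Proposition~\ref{prop:asymptotics for psi} (with Stirling) gives $\log_2\psi_m = \lfloor m^2/4\rfloor - \log_2\bigl(\lfloor m/2\rfloor!\,\lceil m/2\rceil!\bigr) + O(1)$, so the convolution is dominated by its two extreme terms $\psi_1\psi_{n - 1} = \psi_{n - 1}\psi_1 = 2\psi_{n - 1}$. Concretely, I would use this estimate to show that for $n$ large and $1 \leq k \leq n - 1$,
\[
\psi_k\psi_{n - k} \leq 2^{-(\min(k,\,n - k) - 1)}\,\psi_1\psi_{n - 1}.
\]
The exponent on the left splits as a ``quadratic gain'' $\tfrac12(k - 1)(n - 1 - k)$ against an ``entropy loss'' bounded by $\log_2$ of a product of binomial coefficients $\binom{\lfloor n/2\rfloor}{\lfloor k/2\rfloor}\binom{\lceil n/2\rceil}{\lceil k/2\rceil}$ (up to $O(\log n)$), and the quadratic gain dominates for every $2 \leq k \leq n - 2$ once $n$ is large --- using $\binom{\lfloor n/2\rfloor}{\lfloor k/2\rfloor}\leq \lfloor n/2\rfloor^{\lfloor k/2\rfloor}$ when $k \leq n/4$ and the crude bound $2^{\lfloor n/2\rfloor}$ otherwise. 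Summing the resulting geometric series yields $\sum_{k = 1}^{n - 1}\psi_k\psi_{n - k}\leq 8\psi_{n - 1} = O(\psi_{n - 1})$; the finitely many small $n$ for which the pointwise bound fails are irrelevant to the $O$-statement.

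Finally, I would compute the constant $C = \partial F/\partial y\,|_{(0, 0)}$ in each case. For $F_g$: at $(0, 0)$ the numerator of the rational part vanishes and its denominator is $1$, while $\partial_y$ of that numerator is $e^x(e^x - 2)^2$, which equals $1$ at $x = 0$; hence $C = 1$. For $F_w$: writing $N$ and $D$ for the numerator and denominator of its rational part, the summand $(e^{-x} - 1)(1 + y)$ contributes $e^{-x} - 1 = 0$ at $x = 0$, and the rational part contributes $(N_y D - N D_y)/D^2\big|_{(0,0)} = \bigl((-1)(1) - (1)(-2)\bigr)/1 = 1$; hence again $C = 1$. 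Theorem~\ref{thm:bender} then gives $g_n/n! = \psi_n + O(\psi_{n - 1})$ and $w_n/n! = \psi_n + O(\psi_{n - 1})$; dividing by $\psi_n$ and using hypothesis~(2) gives $g_n/n!\sim\psi_n$ and $w_n/n!\sim\psi_n$, which is the claim.
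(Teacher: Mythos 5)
The statement you were given is Bender's theorem, which is cited from \cite{bender} and never proved in the paper; your submission does not prove it either, but instead invokes it as a black box. What you have actually written is a proof of Theorem~\ref{thm:poset asymptotics}, the result the paper obtains as an application of Bender's theorem. Read as a proof of \emph{that} statement, your argument is correct and essentially the same as the paper's: the same choice $A(x) = \Psi(x) - 1$, the same auxiliary functions $F_g$ and $F_w$ (called $F_1$ and $F_2$ in the paper), the same three hypothesis checks, and the same constant $C = 1$. The one place you package things differently is hypothesis~(3): you propose a uniform pointwise estimate $\psi_k\psi_{n-k} \leq 2^{-(\min(k, n-k)-1)}\psi_1\psi_{n-1}$ for large $n$ and sum a geometric series, whereas the paper bounds the ratio $\sum_k \psi_k\psi_{n-k}/\psi_{n-1}$ directly by isolating the terms $k \in \{1, 2, n-2, n-1\}$ and showing each remaining term is $O(n^{-2})$. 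Both arguments rest on the same super-exponential decay of $\psi_{n-1}/\psi_n$ and both go through; yours makes the geometric structure more visible, but the claimed pointwise inequality still needs a careful check across the full range $2 \leq k \leq n-2$ rather than only the sketched regimes.
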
   
We now use this result to prove Theorem \ref{thm:poset asymptotics}.          
\begin{proof}   
Define          
\[              
F_1(x, y) = 1 + \frac{e^{2x}(2e^x - 3) + e^x(e^x - 2)^2 (y + 1)}{e^x(2e^x + 1) + (e^{2x} - 2e^x - 1)(y + 1)}  
\]              
and             
\[              
F_2(x, y) = (e^{-x} - 1)(y + 1) + \frac{2e^{3x} + e^{2x}(e^{x}-2)(y + 1)}{e^x(2e^{x} + 1) + (e^{2x} - 2e^x - 1)(y + 1)}      
\]              
so that         
\[              
F_1(x, \Psi(x) - 1) = G(x) 
\]              
is the exponential generating function for strongly graded $({\bf 3+1})$-avoiding posets (compare Theorem~\ref{thm:strong final answer}) and
\[              
F_2(x, \Psi(x) - 1) = W(x)
\]
is the exponential generating function for weakly graded $({\bf 3+1})$-avoiding posets (compare Theorem~\ref{thm:weak gf}).  (We compose with $\Psi - 1$ instead of $\Psi$ so that the composition is formally valid.)  We seek to apply Theorem~\ref{thm:bender} to compute asymptotics for the coefficients of $G$ and $W$.  To apply the theorem, we have three conditions to check.  The first condition is that $F_1$ and $F_2$ are analytic in a neighborhood of $(0, 0)$, which is clear by inspection. The second condition follows immediately from Proposition~\ref{prop:asymptotics for psi}.   The third condition is slightly trickier: we must show that       
\[              
\sum_{k = 1}^{n - 1} \psi_k \psi_{n - k} = O(\psi_{n - 1}).    
\]              
We do that now. 
 
The proof of Proposition~\ref{prop:asymptotics for psi} not only gives asymptotics for $\psi_n$ but also shows that          
\[              
\psi_n \leq C \frac{2^{n^2/4}}{\lfloor n/2\rfloor! \cdot \lceil n/2 \rceil!}  
\]              
for all $n$.  In addition, by taking only one term of the sum we have         
\[        
\psi_{n - 1} \geq \frac{2^{\lfloor (n - 1)/2\rfloor \cdot \lceil (n - 1)/2\rceil}}{\lfloor (n - 1)/2\rfloor! \cdot \lceil (n - 1)/2\rceil!} 
\]              
for all $n$.  Thus         
\[              
\sum_{k = 1}^{n - 1} \frac{\psi_k \psi_{n - k}}{\psi_{n - 1}} \leq C' \sum_{k = 1}^{n - 1}\frac{\lfloor (n - 1)/2\rfloor!  \cdot \lceil (n - 1)/2\rceil! \cdot  2^{- (k - 1)(n - k - 1)/2}}{\lfloor k/2\rfloor! \cdot \lceil k/2 \rceil! \cdot \lfloor (n - k)/2\rfloor! \cdot \lceil (n - k)/2 \rceil!}         
\]       
for some constant $C'$. This last summation is bounded by an absolute constant independent of $n$: the $k = 1$ and $k = n - 1$ terms of the sum are constant in $n$ while the $k = 2$ and $k = n - 2$ terms contribute a combined             
\[              
O\left(\lceil (n - 1)/2 \rceil 2^{-n/2}\right) = o(1)         
\]              
to the sum.  Each of the remaining terms can be seen to be $O(n^{-2})$, so their total contribution is also $o(1)$. 

We have shown that the conditions of Theorem~\ref{thm:bender} hold and we now apply it directly.  By direct computation, $\frac{\partial}{\partial y} F_1(x, y)\Big|_{(0, 0)} = \frac{\partial}{\partial y} F_2(x, y)\Big|_{(0, 0)} = 1$. Thus, we have
\[     
\frac{g_n}{n!} = \psi_n + O(\psi_{n - 1}) \sim \psi_n 
\]     
and similarly for $w_n$, as desired.
\end{proof}

\section*{Acknowledgments}
We wish to thank Alejandro Morales and Richard Stanley for valuable conversations.  We would also like to thank the two referees for many valuable comments, and in particular for correcting a significant error in Section~\ref{sec:sticking and gluing}.  YXZ was supported by an NSF graduate research fellowship.  JBL was supported in part by NSF RTG grant NSF/DMS-1148634.

\appendix

\section{Computing generating functions for quarks}
\label{section:yan calculus}

In this appendix, we enumerate and compute generating functions counting those sets of the form $A^\nu_\mu$ (introduced in Section~\ref{sec:gf for quarks}) that are of use to us.  For bookkeeping purposes, we make these generating functions bivariate in variables $x$ and $y$, with each bipartite graph in $A^\nu_\mu(m, n)$ (i.e., each graph on the vertex set $[m] \uplus [n]$ with appropriate restrictions) giving a contribution of $\frac{x^my^n}{m!n!}$.

It is very convenient to introduce the generating function
\[
\Psi(x, y) = \sum_{m,n \geq 0} \frac{2^{mn}x^my^n}{m!n!},
\]
as most of our generating functions are most easily expressed in terms of $\Psi(x,y)$.

\begin{enumerate}
\item $|A(m,n)| = 2^{mn}$: we have no restrictions, so all of the $mn$ edges may choose independently to be present or absent.  Equivalently, we have $\sum_{m, n \geq 1} |A(m, n)| \frac{x^m y^n}{m! n!} = \Psi(x,y) - e^x - e^y + 1$.  (The extra terms at the end simply account for the fact that we sum here only over positive values of $m$, $n$.)

\item $|A_\boxx(m,n)| = |A_\ox(m,n)| = (2^n-1)^m$: we need every vertex on the $m$-side to be not all-seeing (respectively, isolated) and there are no other restrictions.  It is not hard to compute the generating function
\[
\sum_{m, n \geq 1} |A_\ox(m, n)| \frac{x^m y^n}{m! n!} = e^{-x} \Psi(x,y) - e^y.
\]
It follows by symmetry that the generating function for $A^\boxx$ and $A^{\ox}$ is $e^{-y} \Psi(x, y) - e^x$.

\item $|A_{\boxx \ox}(m,n)| = (2^n-2)^m$: each vertex on the $m$-side can be connected to any subset on the $n$-side except the empty set or everything.  The associated generating function is $e^{-2x} \Psi(x,y) - e^{-x} - e^y + 1$.  

\item $|A^\boxx_\boxx(m,n)| = |B(m, n)|$: First, we show that 
\begin{equation}\label{eq:gf for B}
|B(m,n)| = \sum_{i = 0}^m (-1)^i \binom{m}{i} (2^{m-i}-1)^n.
\end{equation}
The proof is by inclusion-exclusion on the all-seeing vertices in $[n]$.  For a subset $S \subseteq [n]$, the number of graphs in which the vertices of $S$ are all-seeing and no vertices of $[m]$ are all-seeing is $(2^{n - |S|} - 1)^m$: each vertex in $[m]$ may choose the union of $S$ with any proper subset of $[n] \setminus S$ to be its neighbors, and these choices may be made independently.  Applying inclusion-exclusion immediately gives the result.  (As an aside, this means that the summation expression on the right-hand side of Equation~\eqref{eq:gf for B} is symmetric in $m$ and $n$, a fact not immediately obvious from its formula.)

Now, a routine calculation gives
\[
1 + \sum_{m,n \geq 1} |B(m,n)| \frac{x^m y^n}{m!n!} = \sum_{m,n \geq 0} |B(m,n)| \frac{x^m y^n}{m!n!} = e^{-x - y} \Psi(x, y).
\]

\item $|A_{\boxx \oo}^\boxx(m,n)| = |B_\oo(m, n)|$: From the definitions of the sets $A^\nu_\mu$ and the preceding computations we have  
\[
|A_{\boxx \oo}^\boxx| = |A_{\boxx \oo}| - |A_{\boxx \oo}^\boxo| = |A_{\boxx \oo}| = (2^n-1)^m - (2^n-2)^m.
\]
The associated generating function is $(1-e^{-x}) (e^{-x} \Psi(x,y) - 1)$.

\item $|A^{\boxx \oo}_{\boxx \oo}(m,n)| = |B^\oo_\oo(m,n)|$:  We have, by similar computations, that
\[
|A^{\boxx \oo}_{\boxx \oo}| = |A| - |A^{\ox}| - |A_\ox| + |B|
\]
and so the associated generating function is $
(1-e^{-x})(1-e^{-y})\Psi(x,y)$.
\end{enumerate}

Finally, we may use the work above to compute the generating functions we desire.  We have
\begin{align*}
F^\oo_\oo(x) & = \sum_{m, n \geq 1} |B^\oo_\oo(m, n)| \frac{x^{m+n}}{m!n!} \\
             & = (1 - e^{-x})^2 \Psi(x, x), \\[12pt]
F^\ox_\oo(x) & = \sum_{m, n \geq 1} |B^\ox_\oo(m, n)| \frac{x^{m+n}}{m!n!} \\
             & = \sum_{m, n \geq 1} \left(|B_\oo(m, n)| - |B^\oo_\oo(m, n)|\right) \frac{x^{m+n}}{m!n!} \\
             & = (1 - e^{-x})((2 e^{-x} - 1) \Psi(x, x) - 1),
\end{align*}
and similarly
\[
F^\oo_\ox(x) = (1 - e^{-x})((2 e^{-x} - 1) \Psi(x, x) - 1)
\]
and
\[
F^\ox_\ox(x) = (2e^{-x} - 1)((2e^{-x} - 1)\Psi(x, x) - 1).
\]

\bibliographystyle{alpha}
\bibliography{main}

\begin{thebibliography}{BMCDK10}

\bibitem[ASV12]{sagan:3+1permutations}
Mike~D. Atkinson, Bruce~E. Sagan, and Vincent Vatter.
\newblock Counting {$(3+1)$}-avoiding permutations.
\newblock {\em European J. Combin.}, 33(1):49--61, 2012.

\bibitem[Ben75]{bender}
Edward~A. Bender.
\newblock An asymptotic expansion for the coefficients of some formal power
  series.
\newblock {\em J. London Math. Soc. (2)}, 9:451--458, 1974/75.

\bibitem[BMCDK10]{2+2}
Mireille Bousquet-M{\'e}lou, Anders Claesson, Mark Dukes, and Sergey Kitaev.
\newblock {$(2+2)$}-free posets, ascent sequences and pattern avoiding
  permutations.
\newblock {\em J. Combin. Theory Ser. A}, 117(7):884--909, 2010.

\bibitem[Fad]{birthday}
Sukhada Fadnavis.
\newblock A generalization of the birthday problem and the chromatic
  polynomial.
\newblock \texttt{arXiv:1105.0698v1}.

\bibitem[Fis70]{fishburn}
Peter~C. Fishburn.
\newblock Intransitive indifference with unequal indifference intervals.
\newblock {\em J. Mathematical Psychology}, 7:144--149, 1970.

\bibitem[GPMR]{all3+1}
Mathieu Guay-Paquet, Alejandro~H. Morales, and Eric Rowland.
\newblock Structure and enumeration of (3+1)-free posets.
\newblock \texttt{arXiv:1303.3652v1}.

\bibitem[JMM07]{jackson-moffatt-morales}
David~M. Jackson, Iain Moffatt, and Alejandro Morales.
\newblock On the group-like behavior of the {L}e-{M}urakami-{O}htsuki
  invariant.
\newblock {\em J. Knot Theory Ramifications}, 16(6):699--718, 2007.

\bibitem[Kla69]{klarner}
David~A. Klarner.
\newblock The number of graded partially ordered sets.
\newblock {\em J. Combinatorial Theory}, 6:12--19, 1969.

\bibitem[LZ12]{FPSAC}
Joel~Brewster Lewis and Yan~X Zhang.
\newblock Enumeration of graded $(3 + 1)$-avoiding posets (extended abstract).
\newblock In {\em 24th {I}nternational {C}onference on {F}ormal {P}ower
  {S}eries and {A}lgebraic {C}ombinatorics ({FPSAC} 2012)}, pages 57--68.
  Discrete Math. Theor. Comput. Sci. Proc., AN, 2012.

\bibitem[Odl95]{odlyzko}
Andrew~M. Odlyzko.
\newblock Asymptotic enumeration methods.
\newblock In {\em Handbook of combinatorics, {V}ol.\ 2}, pages 1063--1229.
  Elsevier, 1995.

\bibitem[Ska01]{skandera:characterization}
Mark Skandera.
\newblock A characterization of {$(3+1)$}-free posets.
\newblock {\em J. Combin. Theory Ser. A}, 93(2):231--241, 2001.

\bibitem[SS93]{stanley-stembridge}
Richard~P. Stanley and John~R. Stembridge.
\newblock On immanants of {J}acobi-{T}rudi matrices and permutations with
  restricted position.
\newblock {\em J. Combin. Theory Ser. A}, 62(2):261--279, 1993.

\bibitem[Sta97]{stanley:EC1}
Richard~P. Stanley.
\newblock {\em Enumerative combinatorics. {V}ol. 1}, volume~49 of {\em
  Cambridge Studies in Advanced Mathematics}.
\newblock Cambridge University Press, 1997.

\bibitem[Sta99]{stanley:EC2}
Richard~P. Stanley.
\newblock {\em Enumerative combinatorics. {V}ol. 2}, volume~62 of {\em
  Cambridge Studies in Advanced Mathematics}.
\newblock Cambridge University Press, 1999.

\end{thebibliography}

\end{document}